\DeclareMathAlphabet{\mathpzc}{OT1}{pzc}{m}{it}
\newcommand{\Norm}[2]{\|#1\|_{#2}}
\def\Z{{\mathbb Z}}
\def\ch{{\mathcal H}}
\def\X{{\mathrm{X}}}
\def\fz{\infty}
\def\lz{\lambda}
\def\wz{\widetilde}
\def\ls{\lesssim}
\def\gs{\gtrsim}
\def\dint{\displaystyle\int}
\def\dfrac{\displaystyle\frac}
\def\r{\right}
\def\lf{\left}
\newtheorem{thm}{Theorem}[section]
\newtheorem{lem}[thm]{Lemma}
\newtheorem{prop}[thm]{Proposition}
\newtheorem{defn}[thm]{Definition}
\newcommand{\R}{\mathbb R}
\newcommand{\intav}{-\!\!\!\!\!\!\int}
\DeclareMathOperator*{\esssup}{ess\ sup}
\numberwithin{equation}{section}
\begin{document}

\arraycolsep=1pt

\title{\Large\bf  Two weight commutators
 on spaces of homogeneous type\\ and applications}
\author{Xuan Thinh Duong, Ruming Gong, Marie-Jose S. Kuffner, Ji Li, \\ Brett D. Wick and Dongyong Yang}

\date{}
\maketitle

\begin{center}
\begin{minipage}{13.5cm}\small

{\noindent  {\bf Abstract:}\
In this paper,
 we establish the two weight commutator of Calder\'on--Zygmund operators in the sense of Coifman--Weiss on spaces of homogeneous type, by studying the weighted Hardy and BMO space for $A_2$ weight and by proving the sparse operator domination of commutators.
 The main tool here is the Haar basis and the adjacent dyadic systems on spaces of homogeneous type, and the construction of a suitable version of a sparse operator on spaces of homogeneous type.
 As applications, we provide a two weight commutator theorem (including the high order commutator) for the following Calder\'on--Zygmund operators: Cauchy integral operator on $\mathbb R$, Cauchy--Szeg\"o projection operator on Heisenberg groups, Szeg\"o projection operators on a family of unbounded weakly pseudoconvex domains,   Riesz transform associated with the sub-Laplacian on stratified Lie groups, as well as the Bessel Riesz transforms (one-dimension and high dimension).

}

\end{minipage}
\end{center}

{\footnotesize
\tableofcontents
}%
%
\bigskip

{\small {\it Keywords}: BMO; commutator; two weights, Hardy space; factorization.}

\medskip

{\small{Mathematics Subject Classification 2010:} {42B30, 42B20, 42B35}}

\section{Introduction and Statement of Main Results\label{s1}}

It is well-known that Coifman, Rochberg and Weiss \cite{crw} characterized the boundedness of the commutator $[b,R_i]$ acting on  Lebesgue spaces in terms of BMO with $R_j = \frac{\partial}{\partial x_j} \Delta^{-1/2}$ the $j$-th Riesz transform on the Euclidean space $\mathbb R^n$, which extended the work of Nehari \cite{Ne} about Hankel operators from complex setting to the real setting $\mathbb R^n$. Later, Bloom in \cite{B} established the characterisation of weighted BMO in terms of boundedness of commutators $[b,H]$ in the two weight setting, where $H$ is the Hilbert transform on $\R$.

Recent remarkable results were achieved by Holmes--Lacey--Wick \cite{HLW} giving the characterisation of weighted BMO space on $\R^n$ in terms of boundedness of commutators of Riesz transforms, and by Lerner--Ombrosi--Rivera-R\'ios \cite{LOR,LOR2} in terms of boundedness of commutators of Calder\'on--Zygmund operators with homogeneous kernels $\Omega({x\over |x|}) {1\over |x|^n}$,
and Hyt\"onen \cite{Hyt} in terms of boundedness of commutators of a more general version of Calder\'on--Zygmund operators
and weighted BMO functions on $\R^n$.
Meanwhile, two weight commutator
has also been studied extensively  in different settings, see for example \cite{DHLWY, DLLWW, GLW, IR}.

We note that to get the lower bound of the two weight commutator for Riesz transforms (for Hilbert in one dimension) in terms of weighted BMO space,
the first proofs used spherical harmonic expansion for the Riesz (Hilbert) kernels, which relies on properties of the Fourier transform of the Riesz (Hilbert)  kernels. A similar method of expansion of the Riesz transform associated with Neumann Laplacian  was used in \cite{DHLWY} for a larger class of $A_p$ weights and for the BMO space associated with Neumann Laplacian which is strictly larger than classical BMO. In \cite{LOR}, concerning the two weight commutator for Calder\'on--Zygmund operators associated with homogeneous kernel $\Omega({x\over |x|}){1\over|x|^{n}}$, the proof of the lower bound was obtained by assuming suitable conditions on the homogeneous function $\Omega$, see also \cite{GHWY, GLW}.  More recently, Hyt\"onen \cite{Hyt} studied the two weight commutator for Calder\'on--Zygmund operators and proposed a condition denoted by the ``non-degenerate Calder\'on--Zygmund kernel'', then the  the lower bound was obtained by a construction of factorisation. Also, in \cite{DLLW, DLLWW}, they established a version of a pointwise kernel lower bound for the Riesz transform associated to sub-Laplacian on stratified Lie groups, which covers the Heisenberg groups, and used this kernel lower bound to obtain the two weight commutator result following the idea in \cite{LOR}.

However, there are a few other important Calder\'on--Zygmund operators (not built on the Euclidean space setting) whose kernels do not have connection to the Fourier transform and are not of homogeneous type such as $\Omega({x\over |x|}){1\over|x|^{n}}$. Moreover,
whether the kernels fall into Hyt\"onen's ``non-degenerate Calder\'on--Zygmund kernel'' has not been studied before and hence the two weight commutator estimates and higher order commutator are unknown.

For example,  the Riesz transform from Muckenhoupt--Stein \cite{MS}: $R_\lambda:= -{d\over dx} (\Delta_\lambda)^{-{1\over2}}$, associated with the Bessel operator on $\mathbb R_+$:
\begin{align*}
\Delta_\lambda:= -{d^2\over dx^2} -{2\lambda\over x}{d\over dx}, \quad x>0, \lambda>-{1\over2},
\end{align*}
and the  Riesz transform $ R_{\lambda,j} = {d\over dx_j} (\Delta_\lambda^{(n+1)})^{-{1\over2}},\ j=1,\ldots,n+1$, associated with the Bessel operator $\Delta_\lambda^{(n+1)}$ on $\mathbb R^{n+1}_+$    studied in Huber \cite{Hu}:
\begin{align*}
\Delta_\lambda^{(n+1)} = -{d^2\over dx_1^2}\cdots-{d^2\over dx_n^2} -{d^2\over dx_{n+1}^2} -{2\lambda\over x_{n+1}} {d\over dx_{n+1}}.
\end{align*}

Another example is the  Cauchy--Szeg\"o projection operator $\mathcal C$ (for all the notation below we refer to Section 2 in Chapter XII in Stein \cite{s93}), which is the orthogonal projection from $L^2(b\mathcal U^n)$ to the subspace of functions
$\{F^b\}$ that are boundary values of functions $F\in \mathcal H^2(\mathcal U^n)$.  The associated Cauchy--Szeg\"o kernel is as follows.
$$ \mathcal C (f)(x) = \int_{\mathbb H^n} K(y^{-1}\circ x) f(y)dy, $$
where
$ K(x) = -{\partial\over \partial t} \Big({c\over n} [t+i|\zeta|^2]^{-n}\Big)\quad {\rm for}\ \ x=[\zeta,t]\in \mathbb H^n = \mathbb C^n\times\mathbb R$.

%
%

Then it is natural to study the following question:  is there a setting, by which the characterisation of two weight commutators and the related BMO space for Calder\'on--Zygmund operators $T$ can be obtained, that can be applied to Calder\'on--Zygmund operators such as the Bessel Riesz transform, the Cauchy--Szeg\"o projection operator on Heisenberg groups, and many other examples?

To address this question we work in a general setting:  spaces of homogeneous type introduced by
Coifman and Weiss in the early 1970s, in~\cite{CW1}, see also \cite{cw77}.
%
%
We say
that $(X,d,\mu)$ is a {space of homogeneous type} in the
sense of Coifman and Weiss if $d$ is a quasi-metric on~$X$
and $\mu$ is a nonzero measure satisfying the doubling
condition. A \emph{quasi-metric}~$d$ on a set~$X$ is a
function $d: X\times X\longrightarrow[0,\infty)$ satisfying
(i) $d(x,y) = d(y,x) \geq 0$ for all $x$, $y\in X$; (ii)
$d(x,y) = 0$ if and only if $x = y$; and (iii) the
\emph{quasi-triangle inequality}: there is a constant $A_0\in
[1,\infty)$ such that for all $x$, $y$, $z\in X$, 
\begin{eqnarray}\label{eqn:quasitriangleineq}
    d(x,y)
    \leq A_0 [d(x,z) + d(z,y)].
\end{eqnarray}
We say that a nonzero measure $\mu$ satisfies the
\emph{doubling condition} if there is a constant $C_\mu$ such
that for all $x\in X$ and $r > 0$,
\begin{eqnarray}\label{doubling condition}
   \mu(B(x,2r))
   \leq C_\mu \mu(B(x,r))
   < \infty,
\end{eqnarray}
where $B(x,r)$ is the quasi-metric ball by $B(x,r) := \{y\in X: d(x,y)
< r\}$ for $x\in X$ and $r > 0$.
We point out that the doubling condition (\ref{doubling
condition}) implies that there exists a positive constant
$n$ (the \emph{upper dimension} of~$\mu$)  such
that for all $x\in X$, $\lambda\geq 1$ and $r > 0$,
\begin{eqnarray}\label{upper dimension}
    \mu(B(x, \lambda r))
    \leq  C_\mu\lambda^{n} \mu(B(x,r)).
\end{eqnarray}
Throughout this paper we assume that $\mu(X)=\infty$ and that $\mu(\{x_0\})=0$ for every $x_0\in X$.

We now recall the
singular integral operator on spaces of homogeneous type in the sense of Coifman and Weiss.
\begin{defn}\label{def 1}
We say that $T$ is a Calder\'on--Zygmund operator on $(X,d,\mu)$ if $T$ is bounded on $L^2(X)$ and has the associated kernel $K(x,y)$ such that
$T(f)(x)=\int_{X} K(x,y)f(y) d\mu(y)$ for any $x\not\in {\rm supp}\,f$, and $K(x,y)$  satisfies the following estimates: for all $x\not= y$,
\begin{eqnarray}\label{size of C-Z-S-I-O}
    |K(x, y)|
    \leq {{C}\over {V(x, y)}},
\end{eqnarray}
and for $d(x, x')\leq (2A_0)^{-1} d(x, y)$,
\begin{eqnarray}\label{smooth of C-Z-S-I-O}
    |  K(x, y) - K(x', y) |+|  K(y,x) - K(y,x') |
    \leq {C\over V(x,y)}\omega\Big({d(x, x')\over d(x,y)}\Big),
\end{eqnarray}
where $V(x,y)=\mu(B(x,d(x,y)))$ and by the doubling condition we have that $V(x,y)\approx V(y,x)$,
$\omega:[0, 1]\to [0,\infty)$ is continuous, increasing, subadditive, $\omega(0) = 0$.
\end{defn}

We say that $\omega$ satisfies  the Dini condition if $\int_0^1 \omega(t){dt\over t}<\infty$.

Let $T$ be a Calder\'on--Zygmund operator on $X$.
Suppose $b\in L^1_{\rm loc}(X)$ and $f\in L^p(X)$. Let $[b, T]$ be the commutator defined by
\begin{equation*}
[b, T]f(x):= b(x)T( f)(x)-T(bf)(x).
\end{equation*}
The iterated commutators $T_b^m$, $m\in\mathbb N$, are defined inductively by
\begin{equation*}
T_b^mf(x):= [b, T^{m-1}_b]f(x),\quad T_b^1f(x):=[b, T]f(x).
\end{equation*}

Next we use $A_p$, $1\leq p\leq\infty$, to denote the Muckenhoupt weighted class on $X$ (see the precise definition of $A_p$ in Section 2), and the weighted BMO on $X$ is defined as
follows (the Euclidean version of weighted BMO was first introduced by Muckenhoupt and Wheeden \cite{MW76}).
\begin{defn}\label{MWbmo}
Suppose $w \in A_\infty$.
A function $b\in L^1_{\rm loc}(X)$ belongs to
${\rm BMO}_w(X)$ if
\begin{equation*}
\|b\|_{{\rm BMO}_w(X)}:=\sup_{B}{1\over w(B)}\dint_{B}
\lf|b(x)-b_{B}\r| \,d\mu(x)<\fz,
\end{equation*}
where $b_{B}:= {1\over \mu(B)}\int_B b(x)d\mu(x)$ and the supremum is taken over all balls $B\subset X$.
\end{defn}

Our first main result is the following theorem.
\begin{thm}\label{thm1}
Suppose $1<p<\infty$, $\lambda_1,\lambda_2\in A_p$, $\nu:= \lambda_1^{1\over p}\lambda_2^{-{1\over p}}$ and $m\in\mathbb N$. Suppose $b\in {\rm BMO}_{\nu^{1\over m}}(X)$. Then for any Calder\'on--Zygmund operator $T$ as in Definition \ref{def 1} with $\omega$ satisfying the Dini condition,
there exists a positive constant $C_1$ such that
\begin{equation*}
\| T_b^m : L^p_{\lambda_1}(X) \rightarrow L^p_{\lambda_2}(X) \|  \le C_1 \| b \|^m _{{\rm BMO}_{\nu^{1\over m}}(X)} \Big([\lambda_1]_{A_p}[\lambda_2]_{A_p}\Big)^{ {m+1\over2}\cdot \max\{1,{1\over p-1}\}}.
\end{equation*}
\end{thm}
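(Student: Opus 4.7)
The plan is to reduce the theorem to two distinct pieces: (i) a pointwise \emph{sparse domination} of the iterated commutator $T_b^m$ on the space of homogeneous type $(X,d,\mu)$, and (ii) a sharp \emph{two weight bound for the resulting sparse commutator operator}, and then to combine them. The overall shape of the argument follows the dyadic philosophy of Lerner--Ombrosi--Rivera-R\'ios and Hyt\"onen, but the geometric constructions must be carried out in the quasi-metric setting using the adjacent dyadic systems of Hyt\"onen--Kairema.

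Concretely, I would first fix a finite collection $\{\mathcal{D}^{(k)}\}_{k=1}^{K}$ of adjacent dyadic systems so that every ball in $X$ is contained, up to constants, in a dyadic cube of some $\mathcal{D}^{(k)}$. Starting from the kernel identity
\begin{equation*}
T_b^m f(x) \;=\; \int_X (b(x)-b(y))^m K(x,y) f(y)\, d\mu(y),
\end{equation*}
and using on each dyadic cube $Q$ the binomial expansion
$(b(x)-b(y))^m=\sum_{j=0}^m \binom{m}{j}(-1)^j (b(x)-b_Q)^{m-j}(b(y)-b_Q)^j$,
I would adapt Lerner's stopping-cube construction (choosing stopping generations by an averaged maximal truncation of $T$ combined with averages of $|b-b_Q|^j f$) to produce sparse subcollections $\mathcal{S}_k\subset \mathcal{D}^{(k)}$ and obtain the pointwise bound
\begin{equation*}
|T_b^m f(x)| \;\le\; C\sum_{k=1}^{K}\sum_{j=0}^m \binom{m}{j}\, \mathcal{A}_{\mathcal{S}_k, b, j, m}(|f|)(x),
\end{equation*}
where
\begin{equation*}
\mathcal{A}_{\mathcal{S}, b, j, m}(f)(x) := \sum_{Q\in\mathcal{S}} |b(x)-b_Q|^{m-j}\!\left(\frac{1}{\mu(Q)}\int_Q |b-b_Q|^{j}|f|\, d\mu\right)\mathbf{1}_Q(x).
\end{equation*}
The Dini hypothesis on $\omega$ is needed exactly here to control the tails of $K$ off the selected stopping cubes.

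For (ii), I would dualise against $g \in L^{p'}_{\lambda_2^{1-p'}}(X)$ and rewrite
\begin{equation*}
\langle \mathcal{A}_{\mathcal{S},b,j,m}(f),\, g\rangle \;=\; \sum_{Q\in\mathcal{S}} \left\langle |b-b_Q|^{m-j}\, g\right\rangle_Q \!\left\langle |b-b_Q|^{j}\, f\right\rangle_Q \mu(Q).
\end{equation*}
The key analytic input is a John--Nirenberg inequality for ${\rm BMO}_{\nu^{1/m}}(X)$ (with $\nu^{1/m}\in A_\infty$ by the $A_p$ factorisation of $\nu$), which allows the insertion of the $L^{p_0}$/$L^{p_0'}$ (for some $p_0$ close to but larger than $p$) averages of $f$ and $g$ on each $Q$ at the price of a factor $\|b\|_{{\rm BMO}_{\nu^{1/m}}}^{m}$ and weight averages of $\nu^{(m-j)/m}$ and $\nu^{j/m}$, respectively. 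Applying H\"older and the sparseness of $\mathcal{S}$ reduces matters to a bilinear sparse form already controlled by the two weight dyadic square function estimate; the sharp $A_p$--$A_p$ tracking is then obtained by iterating the Buckley-type bound for sparse operators $(m+1)/2$ times on each of the two weights and symmetrising, which produces the exponent $\tfrac{m+1}{2}\max\{1,\tfrac{1}{p-1}\}$ on the product $[\lambda_1]_{A_p}[\lambda_2]_{A_p}$.

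The main obstacle I anticipate is the careful adaptation of the pointwise sparse domination to the quasi-metric setting: unlike in $\mathbb{R}^n$, there is no single dyadic grid, so one has to run the Lerner stopping construction in each adjacent system while making sure that the decay coming from the Dini condition survives the union. A secondary but nontrivial technical point is to prove the two-weight sharp $A_p$ bound for the higher-order sparse commutator operator $\mathcal{A}_{\mathcal{S},b,j,m}$ directly, since the usual conjugation trick $[b,T]=[b-b_B,T]$ does not obviously produce the right joint weight homogeneity when iterated $m$ times; this is where the exponent $\nu^{1/m}$ (rather than $\nu$) in the BMO hypothesis becomes essential.
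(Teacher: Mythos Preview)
Your two-step outline---pointwise sparse domination of $T_b^m$ via adjacent dyadic systems, then a two-weight estimate for the resulting sparse forms $\mathcal{A}_{\mathcal{S},b,j,m}$---matches the paper's strategy exactly, and your step~(i) is essentially identical to the paper's Theorem on sparse domination.

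Where you diverge is in step~(ii). You propose to handle the factors $|b-b_Q|^{m-j}$ and $|b-b_Q|^j$ by a weighted John--Nirenberg inequality, producing $L^{p_0}$ averages and weight factors $\nu^{(m-j)/m}$, $\nu^{j/m}$. The paper does \emph{not} use John--Nirenberg. Instead it proves a separate pointwise lemma: for any sparse family $\mathcal{S}$ there is an enlarged sparse family $\tilde{\mathcal{S}}\supset\mathcal{S}$ such that, for every $Q\in\tilde{\mathcal{S}}$ and a.e.\ $x\in Q$,
\[
|b(x)-b_Q|\;\le\; C\sum_{R\in\tilde{\mathcal{S}},\,R\subset Q}\Omega(b,R)\,\chi_R(x)
\;\le\; C\,\|b\|_{{\rm BMO}_{\nu^{1/m}}}\sum_{R\in\tilde{\mathcal{S}},\,R\subset Q}\frac{\nu^{1/m}(R)}{\mu(R)}\,\chi_R(x).
\]
Substituting this into each $\mathcal{A}_{\mathcal{S},b,j,m}$ and expanding the $\ell$-th power of the inner sum collapses the whole expression into an $(m{+}1)$-fold iterate of the \emph{linear} sparse operator $A_{\tilde{\mathcal{S}}}$, with a weight twist $\nu^{1/m}$ inserted at each step. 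One then applies the sharp $A_p$ bound for $A_{\tilde{\mathcal{S}}}$ exactly $m{+}1$ times, with the weights running through the interpolation chain $\lambda_2,\ \lambda_2\nu^{p/m},\ \ldots,\ \lambda_2\nu^{p}=\lambda_1$; a single application of H\"older on the product $\prod_{i=0}^{m}[\lambda_2^{1-i/m}\lambda_1^{i/m}]_{A_p}$ gives the factor $([\lambda_1]_{A_p}[\lambda_2]_{A_p})^{(m+1)/2}$. So the exponent does not come from ``$(m+1)/2$ iterations on each weight and symmetrising'' as you wrote---that description is not quite coherent---but from $m{+}1$ full iterations followed by one H\"older step on the $A_p$ constants.

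Your John--Nirenberg route is a plausible alternative in spirit, but as written it is too vague to yield the sharp constant: you would need to explain precisely how John--Nirenberg for ${\rm BMO}_{\nu^{1/m}}$ produces exactly the factor $\langle\nu^{j/m}\rangle_Q$ inside the sparse sum, and how the resulting bilinear form reduces to the linear sparse operator with the correct interpolating weights. The paper's pointwise oscillation lemma bypasses this difficulty entirely and is the cleaner mechanism.
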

To obtain the upper bound, we characterise the sparse system and then use the idea from \cite{LOR} to build a suitable version of a sparse operator on a space of homogeneous type. 
Here we apply the tool of adjacent dyadic systems from \cite{HK}, the explicit construction of Haar basis from \cite{KLPW}, and we have to allow suitable overlapping for the sparse sets due to the partition and  covering of the whole space via quasi-metric balls.

To consider the lower bound of the commutator, we assume that  the Calder\'on--Zygmund operator $T$ as in Definition \ref{def 1} with $\omega$ satisfying $\omega(t)\to0$ as $t\to0$, and that $T$ satisfies the following ``non-degenerate''
condition:

\noindent {\it There exist positive constant $c_0$ and $\overline C$ such that for every $x\in X$ and $r>0$, there exists $y\in B(x, \overline C r)\backslash B(x,r)$ satisfying
\begin{equation}\label{e-assump cz ker low bdd weak}
|K(x, y)|\geq \frac1{c_0\mu(B(x,r))}.
\end{equation}
}

Note that in $\R^n$, this ``non-degenrated'' condition was first proposed in \cite{Hy}, and
a similar assumption on the behaviour of the kernel  lower bound  was proposed in \cite{LOR}.
On stratified Lie groups, a similar condition of the Riesz transform kernel  lower bound  was verified in \cite{DLLW}.
 %
Then we have the following lower bound.
\begin{thm}\label{thm2}
Suppose $1<p<\infty$, $\lambda_1,\lambda_2\in A_p$, $\nu:= \lambda_1^{1\over p}\lambda_2^{-{1\over p}}$ and $m\in\mathbb N$. Suppose $b\in L^1_{\rm loc}(X)$ and that $T$ is a Calder\'on--Zygmund operator as in Definition \ref{def 1} and satisfies the non-degenrated condition \eqref{e-assump cz ker low bdd weak}.
Also suppose that  $T_b^m$ is bounded from
$ L^p_{\lambda_1}(X) $ to $ L^p_{\lambda_2}(X)$.
Then $b\in {\rm BMO}_{\nu^{1\over m}}(X)$, and there exists a positive constant $C_2$ such that
\begin{equation*}
 \| b \| _{{\rm BMO}_{\nu^{1\over m}}(X)}^m\leq C_2 \| T_b^m : L^p_{\lambda_1}(X) \rightarrow L^p_{\lambda_2}(X) \|.
\end{equation*}
\end{thm}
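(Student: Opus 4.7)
The plan is to prove the lower bound by adapting the classical median/twin-ball method---developed in the two-weight setting by Holmes--Lacey--Wick \cite{HLW}, Hyt\"onen \cite{Hyt}, and Lerner--Ombrosi--Rivera-R\'ios \cite{LOR}---to the $m$-th order, two-weight setting on a general space of homogeneous type. The first step produces a twin-ball pair from the non-degeneracy condition: given $B=B(x_0,r)$, \eqref{e-assump cz ker low bdd weak} yields $y_B\in B(x_0,\overline{C}r)\setminus B(x_0,r)$ with $|K(x_0,y_B)|\geq 1/(c_0\mu(B))$, and the kernel smoothness \eqref{smooth of C-Z-S-I-O} together with $\omega(t)\to 0$ lets me choose a small $\eta>0$ (depending only on $c_0,\overline{C},A_0,\omega$) so that on $B^*\times\tilde B$, where $B^*:=B(x_0,\eta r)$ and $\tilde B:=B(y_B,\eta r)$, the kernel $K$ has constant sign $\varepsilon\in\{\pm 1\}$ and $|K(x,y)|\gtrsim 1/\mu(B)$. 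Doubling gives $\mu(B^*)\approx\mu(\tilde B)\approx\mu(B)$, so it suffices to bound the oscillation of $b$ on $B^*$.

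By induction one has the kernel representation $T_b^m f(x)=\int K(x,y)(b(x)-b(y))^m f(y)\,d\mu(y)$. Let $c$ be a median of $b$ over $\tilde B$ and split $B^*=B^{*,+}\sqcup B^{*,-}$, $\tilde B=\tilde B^+\sqcup\tilde B^-$ via the sign of $b-c$, so each piece carries at least half the mass of its parent. On the mismatched rectangles $B^{*,\pm}\times\tilde B^{\mp}$ both $\pm(b(x)-c)$ and $\mp(b(y)-c)$ are nonnegative, whence $|(b(x)-b(y))^m|\geq|b(x)-c|^m$ with a predictable sign. I will then choose test functions $f_\pm$ supported in $\tilde B^{\mp}$ and $g_\pm$ supported in $B^{*,\pm}$---each a ``signed weighted indicator'' with weight factors built from $\sigma_i:=\lambda_i^{1-p'}$ and $\nu^{1/m}$---with the $\pm$-signs calibrated against $\varepsilon$ and the parity of $m$, so that the integrand $K(x,y)(b(x)-b(y))^m g_\pm(x)f_\pm(y)$ is pointwise nonnegative on the relevant rectangle.

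Summing the two contributions and invoking the two-weight duality
\begin{equation*}
\Big|\int T_b^m f\cdot g\,d\mu\Big|\leq\|T_b^m\|_{L^p_{\lambda_1}\to L^p_{\lambda_2}}\|f\|_{L^p_{\lambda_1}}\|g\|_{L^{p'}_{\sigma_2}}
\end{equation*}
gives a lower bound for $\int_{B^*}|b-c|^m\,d\mu$ in terms of $\|T_b^m\|$ and the dual-norm product. Jensen's inequality $(\int_{B^*}|b-c|\,d\mu)^m\leq\mu(B^*)^{m-1}\int_{B^*}|b-c|^m\,d\mu$, the standard mean/median equivalence, and a quantitative $A_p$-based Bloom weight identity on $(X,d,\mu)$ then produce the desired estimate $\|b\|_{\mathrm{BMO}_{\nu^{1/m}}}^m\lesssim\|T_b^m\|_{L^p_{\lambda_1}\to L^p_{\lambda_2}}$.

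The trickiest part will be the calibration of weight factors on $f_\pm,g_\pm$ together with the accompanying Bloom-type $A_p$ computation: the factors must be chosen so that the dual-norm product combines with the lower bound to produce exactly $(\nu^{1/m}(B))^m$, up to powers of $[\lambda_1]_{A_p}$ and $[\lambda_2]_{A_p}$. A na\"ive choice of pure indicators delivers only $\mu(B)^{m-1}\lambda_1(B)^{1/p}\sigma_2(B)^{1/p'}$, which via Hölder and Jensen applied to $\nu^{1/m}=\lambda_1^{1/(mp)}\sigma_2^{1/(mp')}$ is in general strictly larger than $(\nu^{1/m}(B))^m$, so a more subtle choice is required. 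On $\mathbb{R}^n$ this weight algebra is standard; on $(X,d,\mu)$ it must be executed using only doubling of $\lambda_1,\sigma_2$ and the $A_p$ constants. A secondary issue is the sign calibration for odd versus even $m$, which interacts with $\varepsilon$ and is handled by flipping the sign of one of $f_-,g_-$ accordingly.
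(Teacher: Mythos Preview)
Your approach is essentially the one the paper takes: pass from the non-degeneracy hypothesis to a twin-ball configuration on which $K$ has constant sign and satisfies $|K|\gtrsim 1/\mu(B)$ (the paper packages this as a separate Proposition showing \eqref{e-assump cz ker low bdd weak} $\Rightarrow$ \eqref{e-assump cz ker low bdd}, choosing the companion ball far away rather than shrinking the original ball as you do, but the two devices are equivalent via doubling), then split both balls by the median of $b$ on the companion ball, test $T_b^m$ against the resulting pieces, and combine with H\"older and the boundedness of $T_b^m$. Your sign-calibration worry is handled in the paper simply by working with $|T_b^m f_i|$: on each rectangle $E_i\times F_i$ both $K(x,y)$ and $b(x)-b(y)$ have fixed sign, so the integrand $(b(x)-b(y))^m K(x,y)$ has constant sign regardless of the parity of $m$, and no separate treatment of odd $m$ is needed.

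The one substantive point where your plan diverges is the weight algebra, and here you are overcomplicating matters. You assert that pure indicators deliver only
\[
\Omega(b,B)^m \lesssim \frac{\lambda_1(B)^{1/p}\,\sigma_2(B)^{1/p'}}{\mu(B)}\,\|T_b^m\|
\]
and that this is not good enough because H\"older only gives $(\nu^{1/m}(B))^m \le \mu(B)^{m-1}\lambda_1(B)^{1/p}\sigma_2(B)^{1/p'}$. But the paper's point (following \cite{LOR2}) is precisely that for $A_\infty$ weights the \emph{reverse} inequality holds: property \eqref{e-reverse holder}, namely $\intav_B w \lesssim (\intav_B w^\delta)^{1/\delta}$ for every $\delta\in(0,1)$, applied to $\lambda_1,\sigma_2,\nu\in A_\infty$, yields
\[
\Big(\intav_B \lambda_1\Big)^{1/p}\Big(\intav_B \sigma_2\Big)^{1/p'} \lesssim \Big(\intav_B \nu^{1/m}\Big)^m,
\]
which is exactly what is required. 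So no ``signed weighted indicator'' test functions are needed; the plain characteristic functions $f_i=\chi_{F_i}$ suffice, and the last step is a one-line application of reverse H\"older rather than a delicate calibration.
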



Based on the characterisation of ${\rm BMO}_\nu(X)$  via commutators $T^1_b=[b,T]$, we further have the weak factorisation for the 
weighted Hardy space $H_\nu^1(X)$ as follows.
\begin{thm}\label{thm5}
Suppose $1<p<\infty$, $\lambda_1,\lambda_2\in A_p$ and $\nu:= \lambda_1^{1\over p}\lambda_2^{-{1\over p}}$.  Let $p'$ be the conjugate of $p$
and $\lambda'_2:=\lambda_2^{-{1\over p-1}}$. For any $f\in H^1_\nu(X)$, there exist numbers $\{\alpha^k_j\}_{k,\,j}$, functions $\{g^k_j\}_{k,\,j}\subset L^p_{\lambda_1}(X)$ and $\{h^k_j\}_{k,\,j}\subset L^{p'}_{\lambda'_2}(X)$ such that
\begin{equation}\label{represent of H1}
f=\sum_{k=1}^\infty\sum_{j=1}^\infty \alpha^k_j\,\Pi\big(g^k_j,h^k_j\big)
\end{equation}
in $H^1_\nu(X)$, where the operator $\Pi$ is defined as follows: for $g\in L^p_{\lambda_1}(X)$ and $h\in L^{p'}_{\lambda'_2}(X)$,
\begin{equation*}
\Pi(g,h):=g T h- h T^* g,
\end{equation*}
where $T$ is a Calder\'on--Zygmund operator as in Definition \ref{def 1} and satisfies the non-degenrated condition \eqref{e-assump cz ker low bdd weak}.
Moreover, we have
\begin{eqnarray}\label{H1norm}
\|f\|_{H^1_\nu(X)}&\approx&\inf\lf\{\sum_{k=1}^\fz\sum_{j=1}^\infty\lf|\alpha^k_j\,\r|\lf\|g^k_j\r\|_{L^p_{\lambda_1}(X)}\lf\|h^k_j\r\|_{L^{p'}_{\lambda'_2}(X)}: f=\sum_{k=1}^\infty\sum_{j=1}^\infty \alpha^k_j\,\Pi\lf(g^k_j,h^k_j\r)\r\},
\end{eqnarray}
where the implicit constants are independent of $f$.
\end{thm}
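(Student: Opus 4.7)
The plan is to deduce the factorization from the two commutator bounds just established, via duality with the weighted BMO space, following the Coifman--Rochberg--Weiss scheme adapted to the present two-weight setting on $(X,d,\mu)$. The key algebraic identity, a direct consequence of unweighted $L^2$ adjointness, is
\begin{equation*}
\int_X b(x)\,\Pi(g,h)(x)\,d\mu(x) \;=\; \int_X g(x)\,[b,T]h(x)\,d\mu(x),
\end{equation*}
valid for $g,h$ in a suitably dense subclass. Combined with the duality $(H^1_\nu(X))^* = {\rm BMO}_\nu(X)$, which we treat as a known ingredient for the weighted Hardy space on spaces of homogeneous type, the two inequalities comprising \eqref{H1norm} correspond respectively to Theorem~\ref{thm1} and Theorem~\ref{thm2}.

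First I would introduce the auxiliary space $E^1(X)$ of finite linear combinations $F=\sum_{j=1}^{N}\alpha_j\,\Pi(g_j,h_j)$, equipped with the norm given by the infimum on the right-hand side of \eqref{H1norm}. A short computation shows $(\lambda'_2)^{1/p'}(\lambda'_1)^{-1/p'}=\lambda_1^{1/p}\lambda_2^{-1/p}=\nu$, so Theorem~\ref{thm1} applied to $T$ with exponent $p'$ and weights $(\lambda'_2,\lambda'_1)$ yields that $[b,T]:L^{p'}_{\lambda'_2}(X)\to L^{p'}_{\lambda'_1}(X)$ is bounded with operator norm controlled by $\|b\|_{{\rm BMO}_\nu(X)}$. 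Inserting this into the identity above and applying H\"older's inequality against $g\in L^p_{\lambda_1}(X)$ gives
\begin{equation*}
|\langle b,\Pi(g,h)\rangle| \;\lesssim\; \|b\|_{{\rm BMO}_\nu(X)}\,\|g\|_{L^p_{\lambda_1}(X)}\,\|h\|_{L^{p'}_{\lambda'_2}(X)}.
\end{equation*}
The $H^1$-BMO duality then upgrades this to $\|\Pi(g,h)\|_{H^1_\nu(X)}\lesssim \|g\|_{L^p_{\lambda_1}(X)}\|h\|_{L^{p'}_{\lambda'_2}(X)}$, producing the continuous inclusion $E^1(X)\hookrightarrow H^1_\nu(X)$ and hence the ``$\lesssim$'' direction of \eqref{H1norm} whenever a representation exists.

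For surjectivity of this inclusion it suffices, by Hahn--Banach, to show that any $b\in {\rm BMO}_\nu(X)$ annihilating every $\Pi(g,h)$ must vanish modulo constants. The same identity reduces this to $\int_X g\,[b,T]h\,d\mu=0$ for all $g\in L^p_{\lambda_1}(X)$ and $h\in L^{p'}_{\lambda'_2}(X)$; since $L^{p'}_{\lambda'_1}(X)$ is the dual of $L^p_{\lambda_1}(X)$ under the unweighted pairing, this forces $[b,T]h\equiv 0$ in $L^{p'}_{\lambda'_1}(X)$ for every such $h$, hence $[b,T]$ is the zero operator from $L^{p'}_{\lambda'_2}(X)$ to $L^{p'}_{\lambda'_1}(X)$. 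Since the non-degenerate condition \eqref{e-assump cz ker low bdd weak} is a property of the kernel $K$ itself, independent of the choice of exponent and weights, Theorem~\ref{thm2} applies with $p'$ in place of $p$ and $(\lambda'_2,\lambda'_1)$ in place of $(\lambda_1,\lambda_2)$, producing the same weight $\nu$ and giving $\|b\|_{{\rm BMO}_\nu(X)}\lesssim \|[b,T]\|=0$. Density of $E^1(X)$ in $H^1_\nu(X)$ is thus established, and the double-indexed representation \eqref{represent of H1} arises from the standard iterative approximation: pick $F_1\in E^1(X)$ with $\|F_1\|_{E^1}\lesssim \|f\|_{H^1_\nu}$ and $\|f-F_1\|_{H^1_\nu}\leq \tfrac12\|f\|_{H^1_\nu}$, repeat on the residual $f-F_1$, and collect the summands at the $k$-th stage as $\{\alpha^k_j,g^k_j,h^k_j\}_{j\ge 1}$; the geometric decay of errors yields the $\ell^1$-convergence of the total coefficient sum and the equivalence \eqref{H1norm}.

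The principal obstacle is securing the duality $(H^1_\nu(X))^*={\rm BMO}_\nu(X)$ in the generality of the present framework; the two-weight Hardy space theory on spaces of homogeneous type is more delicate than its Euclidean counterpart, and this duality should be isolated as a separate result (via the weighted atomic decomposition with appropriate moment conditions) before the above argument goes through. A secondary technical point is the rigorous interpretation of the pairing identity when $b$ is only locally integrable, which is handled by truncating $b$ and restricting $g,h$ to bounded, compactly supported test functions, in the spirit of \cite{HLW, LOR}.
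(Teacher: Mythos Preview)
Your proposal is correct and follows essentially the same route as the paper: both directions of \eqref{H1norm} are obtained from the duality $(H^1_\nu(X))^*={\rm BMO}_\nu(X)$ (which the paper isolates as Theorem~\ref{HardyBMODuality}, exactly as you anticipate) together with the commutator bounds of Theorems~\ref{thm1} and~\ref{thm2}, and the surjectivity is handled via the Hahn--Banach/iterative approximation argument that the paper simply attributes to \cite{CLMS}. Your write-up is in fact more explicit than the paper's own proof, which is a two-paragraph sketch.
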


As applications, besides the classical Hilbert transform, Riesz transform and the Calder\'on--Zygmund operators with homogeneous kernels $\Omega({x\over |x|}){1\over |x|^n}$ on $\R^n$ (studied in \cite{HLW,LOR}),  we use our main theorems  to obtain the two weight commutator result of the following operators:

\begin{enumerate}

\item the Cauchy integral operator $C_A$ along a Lipschitz curve  $z:= x+iA(x)$, $x\in(-\infty,\infty)$ and $A'\in L^\infty(\mathbb R)$;


\item The Cauchy--Szeg\"o projection operator on Heisenberg group $\mathbb H^n$;

\item The Szeg\"o projection operator on a family of weakly pseudoconvex domains;


\item Riesz transforms associated with Sub-Laplacian on stratified Lie groups $\mathcal G$;

\item Riesz transforms associated with Bessel operator $\Delta_\lambda$ on $\R_+$ for $\lambda>-1/2$;

\item Riesz transforms associated with higher order Bessel operator $\Delta_{n,\lambda}$ on $\R^{n+1}_+$ for $\lambda>-1/2$.

\end{enumerate}
The definitions of the above operators  will be given in Section \ref{s:Application}. We have the following result.


\begin{thm}\label{thm3}
Let $T$ be one of the operators listed above and let $(X,d,\mu)$ be the underlying space adapted to $T$.  Suppose $1<p<\infty$, $\lambda_1,\lambda_2\in A_p$, $\nu:= \lambda_1^{1\over p}\lambda_2^{-{1\over p}}$ and $m\in\mathbb N$. Suppose that $b\in L^1_{\rm loc}(X)$. Then we have
\begin{equation}\label{equiv}
 \| b \| _{{\rm BMO}_{\nu^{1\over m}}(X)}^m\approx \| T_b^m : L^p_{\lambda_1}(X) \rightarrow L^p_{\lambda_2}(X) \|.
\end{equation}
Moreover, based on the result above for $m=1$ and on the duality,  the corresponding weighted Hardy space $H^1_\nu(X)$ has a weak factorisation as in \eqref{represent of H1}.
\end{thm}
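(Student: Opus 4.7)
The plan is to reduce Theorem \ref{thm3} to Theorems \ref{thm1}, \ref{thm2} and \ref{thm5} by verifying, for each operator $T$ on its natural underlying space of homogeneous type $(X,d,\mu)$, that (a) $T$ fits the framework of Definition \ref{def 1} with a modulus $\omega$ of Dini type, and (b) the kernel of $T$ satisfies the non-degenerate lower bound \eqref{e-assump cz ker low bdd weak}. Once (a) and (b) are established case by case, the upper estimate in \eqref{equiv} follows from Theorem \ref{thm1}, the lower estimate from Theorem \ref{thm2}, and the weak factorisation of $H^1_\nu(X)$ from Theorem \ref{thm5} applied with $m=1$.

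The bulk of the work is the case-by-case verification. For the Cauchy integral $C_A$ along the Lipschitz graph $z(x)=x+iA(x)$, the underlying space is $(\mathbb R,|\cdot|,dx)$; the kernel $K(x,y)=\frac{1}{z(x)-z(y)}\,(1+iA'(y))$ is standard Calder\'on--Zygmund with $\omega(t)=t$, while $|z(x)-z(y)|\le(1+\|A'\|_\infty)|x-y|$ gives \eqref{e-assump cz ker low bdd weak} pointwise. For the Cauchy--Szeg\"o projection on $\mathbb H^n$, the space is $(\mathbb H^n,d_K,dx)$ with Kor\'anyi gauge; the explicit kernel recalled in the introduction is $\mathrm{C}^\infty$ away from the origin and homogeneous of the critical degree, so standard estimates yield the Calder\'on--Zygmund bounds and a pointwise lower bound on the diagonal direction $[0,t]$. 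For the Szeg\"o projection on the unbounded weakly pseudoconvex domains considered, one invokes the recent explicit kernel descriptions to deduce both the smoothness and the quantitative lower bound in the adapted quasi-metric. For Riesz transforms on stratified Lie groups $\mathcal G$ (which subsume the Heisenberg Riesz transforms), the Calder\'on--Zygmund estimates are classical and the non-degenerate lower bound was verified in \cite{DLLW, DLLWW}. For the Bessel Riesz transform $R_\lambda$ on $(\mathbb R_+, |\cdot|, x^{2\lambda}dx)$ and the higher-dimensional Bessel Riesz transforms $R_{\lambda,j}$ on $(\mathbb R^{n+1}_+,|\cdot|,x_{n+1}^{2\lambda}dx)$, the explicit integral representations of the kernels (through the Poisson semigroup and the Weber--Schafheitlin formulas) yield both the Dini-type smoothness of the kernel in the weighted space and a pointwise lower bound on the kernel along the appropriate direction away from the boundary $x_{n+1}=0$.

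After (a) and (b) are in place for a given $T$, the full statement is immediate: Theorem \ref{thm1} gives
\[
\|T^m_b:L^p_{\lambda_1}(X)\to L^p_{\lambda_2}(X)\|\lesssim \|b\|^m_{\mathrm{BMO}_{\nu^{1/m}}(X)},
\]
Theorem \ref{thm2} gives the reverse inequality, and together they produce \eqref{equiv}. For the factorisation statement, we specialise to $m=1$ and apply Theorem \ref{thm5} directly, noting that both hypotheses of that theorem (Calder\'on--Zygmund structure of $T$ and the non-degenerate condition) are precisely what we have just verified.

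The main obstacle is the verification of the pointwise non-degenerate lower bound \eqref{e-assump cz ker low bdd weak} for the projection-type operators, namely the Cauchy--Szeg\"o kernel on $\mathbb H^n$ and the Szeg\"o kernels on the weakly pseudoconvex domains. These kernels are oscillatory and not positive, so one cannot read the lower bound off a crude size estimate; instead one must choose, for each $x\in X$ and $r>0$, a specific direction (roughly along the characteristic/vertical axis) in which the phase of $K(x,y)$ is controlled and $|K(x,y)|\gtrsim 1/\mu(B(x,r))$. The Bessel cases present a milder version of the same issue, handled by tracking the sign of the derivative of the relevant Poisson integral. For all other operators the required lower bound is either trivial (Cauchy integral on Lipschitz curves) or already in the literature (Riesz transforms on stratified Lie groups), so the proof reduces to recording these verifications and citing Theorems \ref{thm1}, \ref{thm2}, \ref{thm5}.
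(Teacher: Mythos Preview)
Your overall strategy matches the paper's: reduce to Theorems \ref{thm1}, \ref{thm2} and \ref{thm5} by verifying, for each listed operator, the Calder\'on--Zygmund hypotheses of Definition \ref{def 1} together with the non-degenerate lower bound \eqref{e-assump cz ker low bdd weak}. This is exactly how Section \ref{s:Application} is organised.

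Where you diverge from the paper is in your assessment of where the difficulty lies. You identify the Cauchy--Szeg\"o projection on $\mathbb H^n$ and the Szeg\"o projections on weakly pseudoconvex domains as ``the main obstacle,'' on the grounds that their kernels are oscillatory and a direction must be chosen to control the phase. But condition \eqref{e-assump cz ker low bdd weak} asks only for a lower bound on the \emph{modulus} $|K(x,y)|$ at a single point $y$ in an annulus; no sign or phase control is needed at this stage (that is supplied later, uniformly, by Proposition \ref{prop homogeneous} via the smoothness of $K$). For both projection operators the literature already gives a \emph{two-sided} size estimate $|K(x,y)|\approx 1/V(x,y)$ (see \cite[Chapter XII]{s93} for $\mathbb H^n$ and \cite{Diaz} for the pseudoconvex domains), so \eqref{e-assump cz ker low bdd weak} is immediate for \emph{any} $y$ in the annulus. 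These are in fact the easiest cases.

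The genuinely laborious verifications are the Bessel Riesz transforms, particularly $R_{\lambda,j}$ on $\mathbb R^{n+1}_+$. There the kernel does not come with an a priori two-sided size bound, and the paper has to carry out new asymptotic analysis (Lemmas \ref{p-high Riesz} and \ref{p-high Riesz-2}) of the heat-kernel representation, using the expansions \eqref{e-Bessel func-1}--\eqref{e-Bessel func-2} for the modified Bessel function $I_\nu$, to produce the lower bound in both the near-diagonal and off-diagonal regimes. Your mention of ``Weber--Schafheitlin formulas'' and the ``Poisson semigroup'' is off target here; the paper works with the heat semigroup and the asymptotics of $I_{\lambda\pm 1/2}$. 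Also, a small point for the Cauchy integral: with the paper's parametrisation the kernel is $C_A(x,y)=\frac{1}{\pi}\big((x-y)+i(A(x)-A(y))\big)^{-1}$, without the extra factor $(1+iA'(y))$.
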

To prove this theorem, the key step is to verify that all these operators listed above satisfy the conditions as in Definition \ref{def 1}
and the homogeneous condition as in \eqref{e-assump cz ker low bdd weak}. We point out that such verification for Cauchy integral operator $C_A$ is direct. The verifications of Cauchy--Szeg\"o projection operator on Heisenberg group, the Szeg\"o projection operator on a family of weakly pseudoconvex domains and the Riesz transforms associated with Sub-Laplacian on stratified Lie groups
can be derived based on the results in \cite[Chapter XII]{s93}, \cite{GrSt} and \cite{DLLW}, respectively. The verification for Riesz transforms associated with Bessel operator $\Delta_\lambda$ on $\R_+$ for $\lambda>0$ can be derived from the result in \cite{MS}, while for $\lambda\in (-1/2,0)$ is new here.  The verification for Riesz transforms associated with higher order Bessel operator
is totally new, especially the pointwise kernel lower bound of this Riesz transform.

\medskip
We now address our result Theorem \ref{thm3} with respect to  the 6 examples above, respectively:
\begin{enumerate}

\item The unweighted result was obtained in \cite{LTWW} when $m=1$, and the two weight result is new here for $m\geq1$;


\item This result is new, even the unweighted version is unknown;

\item This result is new, even the unweighted version is unknown;


\item This result was obtained in \cite{DLLWW} when $m=1$ and is new here when $m>1$.

\item The unweighted result was obtained in \cite{DLWY} when $\lambda>0$ and $m=1$, the two weight result is new here for $m\geq1$ and for all $\lambda>-1/2$;

\item This result is new, even the unweighted version is unknown;

\end{enumerate}

This paper is organised as follows. In Section 2 we recall the necessary preliminaries on spaces of homogeneous type. 
In Section \ref{s4}, we first characterise the sparse system equivalently via the $\Lambda$-Carleson packing condition and the $\eta$-sparse condition,  and then borrowing the idea from \cite{LOR}, we study the sparse operators and its domination of commutator on spaces of homogeneous type, and by using this as a main tool, in Section \ref{s5} we obtain the upper bound of two weight commutator, i.e., Theorem
\ref{thm1}. In Section \ref{s6} we provide the lower bound of two weight commutator, i.e., Theorem \ref{thm2}, by combining the ideas in \cite{LOR2} and \cite{Hyt}. In Section \ref{s3}
we provide a study of weighted Hardy space and its duality on spaces of homogeneous type, and provide the proof of Theorem \ref{thm5}. In Section 7 we provide the applications where we address the new points in this paper. In the last section we also provide a new proof of the lower bound of two weight commutator in the product setting for little bmo space on spaces of homogeneous type. Note that in $\R^n\times \R^m$, this was first studied by \cite{HPW} by using the Fourier transform for the Riesz transform kernel.

\medskip
Throughout the paper,
we denote by $C$ and $\widetilde{C}$ {\it positive constants} which
are independent of the main parameters, but they may vary from line to
line. For every $p\in(1, \fz)$, we denote by $p'$ the conjugate of $p$, i.e., $\frac{1}{p'}+\frac{1}{p}=1$.  If $f\le Cg$ or $f\ge Cg$, we then write $f\ls g$ or $f\gs g$;
and if $f \ls g\ls f$, we  write $f\approx g.$

\section{Preliminaries on Spaces of Homogeneous Type}
\label{s2}
\noindent


%
%

Let $(X,d,\mu)$ be a space of homogeneous type as mentioned in Section 1.

\subsection{A System of Dyadic Cubes}\label{sec:dyadic_cubes}
In $(X,d,\mu)$, a
countable family
$
    \mathscr{D}
    := \cup_{k\in\Z}\mathscr{D}_k, \
    \mathscr{D}_k
    :=\{Q^k_\alpha\colon \alpha\in \mathscr{A}_k\},
$
of Borel sets $Q^k_\alpha\subseteq X$ is called \textit{a
system of dyadic cubes with parameters} $\delta\in (0,1)$ and
$0<a_1\leq A_1<\infty$ if it has the following properties:
\begin{equation*} 
    X
    = \bigcup_{\alpha\in \mathscr{A}_k} Q^k_{\alpha}
    \quad\text{(disjoint union) for all}~k\in\Z;
\end{equation*}
\begin{equation*}
    \text{if }\ell\geq k\text{, then either }
        Q^{\ell}_{\beta}\subseteq Q^k_{\alpha}\text{ or }
        Q^k_{\alpha}\cap Q^{\ell}_{\beta}=\emptyset;
\end{equation*}
\begin{equation*}
    \text{for each }(k,\alpha)\text{ and each } \ell\leq k,
    \text{ there exists a unique } \beta
    \text{ such that }Q^{k}_{\alpha}\subseteq Q^\ell_{\beta};
\end{equation*}
\begin{equation*}
\begin{split}
    & \text{for each $(k,\alpha)$ there exists at most $M$
        (a fixed geometric constant)  $\beta$ such that }  \\
    & Q^{k+1}_{\beta}\subseteq Q^k_{\alpha}, \text{ and }
        Q^k_\alpha =\bigcup_{{Q\in\mathscr{D}_{k+1},
    Q\subseteq Q^k_{\alpha}}}Q;
\end{split}
\end{equation*}
\begin{equation}\label{eq:contain}
    B(x^k_{\alpha},a_1\delta^k)
    \subseteq Q^k_{\alpha}\subseteq B(x^k_{\alpha},A_1\delta^k)
    =: B(Q^k_{\alpha});
\end{equation}
\begin{equation*}
   \text{if }\ell\geq k\text{ and }
   Q^{\ell}_{\beta}\subseteq Q^k_{\alpha}\text{, then }
   B(Q^{\ell}_{\beta})\subseteq B(Q^k_{\alpha}).
\end{equation*}
The set $Q^k_\alpha$ is called a \textit{dyadic cube of
generation} $k$ with centre point $x^k_\alpha\in Q^k_\alpha$
and sidelength~$\delta^k$. 


From the properties of the dyadic system above and from the doubling measure, we can deduce that there exists a constant
$C_{\mu,0}$ depending only on $C_\mu$ as in \eqref{doubling condition} and $a_1, A_1$ as above, such that for any $Q^k_\alpha$ and $Q^{k+1}_\beta$  with $Q^{k+1}_\beta\subset Q^k_\alpha$,
\begin{align}\label{Cmu0}
\mu(Q^{k+1}_\beta)\leq \mu(Q^k_\alpha)\leq C_{\mu,0}\mu(Q^{k+1}_\beta).
\end{align}

We recall from \cite{HK} the following construction, which is a
slight elaboration of seminal work by M.~Christ \cite{Chr}, as
well as Sawyer--Wheeden~\cite{SawW}.

\begin{thm}\label{theorem dyadic cubes}
On $(X,d,\mu)$, there exists a system of dyadic cubes with parameters
$0<\delta\leq (12A_0^3)^{-1}$ and $a_1:=(3A_0^2)^{-1},
A_1:=2A_0$. The construction only depends on some fixed set of
countably many centre points $x^k_\alpha$, having the
properties that
   $ d(x_{\alpha}^k,x_{\beta}^k)
        \geq \delta^k$ with $\alpha\neq\beta$,
    $\min_{\alpha}d(x,x^k_{\alpha})
        < \delta^k$ for all $x\in X,$
   and a certain partial order ``$\leq$'' among their index pairs
$(k,\alpha)$. In fact, this system can be constructed in such a
way that
$$
    \overline{Q}^k_\alpha
    =\overline{\{x^{\ell}_\beta:(\ell,\beta)\leq(k,\alpha)\}}, \quad\quad
    \widetilde{Q}^k_\alpha:=\operatorname{int}\overline{Q}^k_\alpha=
    \Big(\bigcup_{\gamma\neq\alpha}\overline{Q}^k_\gamma\Big)^c,
\quad \quad   \widetilde{Q}^k_\alpha\subseteq Q^k_\alpha\subseteq 
    \overline{Q}^k_\alpha,
$$
where $Q^k_\alpha$ are obtained from the closed sets
$\overline{Q}^k_\alpha$ and the open sets $\widetilde{Q}^k_\alpha$ by
finitely many set operations.
\end{thm}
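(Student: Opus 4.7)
The plan is to build the system level by level from countable maximal $\delta^k$-separated subsets, organize them into a rooted forest by a nearest-parent rule, and then define closed and open ``pre-cubes'' as closures and interiors of the resulting subtrees. Concretely, for each $k\in\zz$ I would first choose a maximal set $\{x^k_\alpha\}_{\alpha\in\mathscr{A}_k}$ with $d(x^k_\alpha,x^k_\beta)\ge\delta^k$ for $\alpha\ne\beta$; maximality forces $\min_\alpha d(x,x^k_\alpha)<\delta^k$ for all $x\in X$, while doubling together with $\mu(X)=\infty$ guarantees the index set is countable. I would then inductively define a parent map sending each $x^{k+1}_\beta$ to a chosen $x^k_\alpha$ minimising $d(x^{k+1}_\beta,x^k_\alpha)$ (breaking ties by a fixed well-ordering), so that every index pair has a canonical ancestor chain. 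The relation ``$(\ell,\beta)\le(k,\alpha)$'' then means that $\ell\ge k$ and the chain from $(\ell,\beta)$ passes through $(k,\alpha)$.

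Next I would set $\overline{Q}^k_\alpha:=\overline{\{x^\ell_\beta:(\ell,\beta)\le(k,\alpha)\}}$ and $\widetilde{Q}^k_\alpha:=(\bigcup_{\gamma\ne\alpha}\overline{Q}^k_\gamma)^c$. Nesting of the $\overline{Q}^k_\alpha$'s is immediate from transitivity of the parent chain, while the $\widetilde{Q}^k_\alpha$'s are pairwise disjoint by construction. The heart of the argument is the two-sided ball sandwich in \eqref{eq:contain}. For the outer inclusion $\overline{Q}^k_\alpha\subseteq B(x^k_\alpha,2A_0\delta^k)$ I would iterate the parent inequality $d(x^{j+1}_\beta,\mathrm{parent}(x^{j+1}_\beta))<\delta^j$ and sum via the quasi-triangle inequality; the choice $\delta\le(12A_0^3)^{-1}$ is what forces the resulting geometric series $\sum_{j\ge k}A_0^{j-k+1}\delta^{j-k}$ to be absorbed by $A_0$, and closure only thickens by an arbitrarily small amount. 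For the inner inclusion $B(x^k_\alpha,(3A_0^2)^{-1}\delta^k)\subseteq\widetilde{Q}^k_\alpha$, the key point is that any $y$ in this ball is, by the separation $d(x^k_\alpha,x^k_\gamma)\ge\delta^k$ and a single use of the quasi-triangle inequality, strictly closer to $x^k_\alpha$ than to any competitor $x^k_\gamma$; combining this with the outer estimate already proved at every deeper level shows $y\notin\overline{Q}^k_\gamma$ for $\gamma\ne\alpha$, hence $y\in\widetilde{Q}^k_\alpha$.

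Finally the honest dyadic cubes $Q^k_\alpha$ are obtained from the two candidates $\widetilde{Q}^k_\alpha\subseteq\overline{Q}^k_\alpha$ by distributing the boundary $\overline{Q}^k_\alpha\setminus\widetilde{Q}^k_\alpha$ among competing neighbours using a fixed well-ordering of $\mathscr{A}_k$, and then pushing the choices downward so that the nested-partition property $Q^k_\alpha=\bigcup_{\mathrm{parent}(\beta)=\alpha}Q^{k+1}_\beta$ holds. I expect this last bookkeeping step to be the main obstacle: one cannot simply make independent boundary choices at each scale, since that would in general destroy either the nesting across scales or the equality of each parent with the disjoint union of its children. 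The remedy is to fix the assignment at the coarsest relevant level first and then refine, verifying by induction that the partition structure propagates consistently. The finite-bounded-children condition ($\#\{\beta:\mathrm{parent}(\beta)=\alpha\}\le M$) then follows from volume doubling applied to the $\delta^{k+1}$-separated children sitting inside $B(x^k_\alpha,2A_0\delta^k)$, which gives a purely geometric constant $M=M(A_0,C_\mu,\delta)$.
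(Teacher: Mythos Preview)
The paper does not prove this theorem at all; it is quoted verbatim from Hyt\"onen--Kairema \cite{HK} (building on Christ \cite{Chr} and Sawyer--Wheeden \cite{SawW}) as a known structural tool, with no argument given. Your proposal is a correct outline of precisely the Hyt\"onen--Kairema construction, so in that sense you and the paper agree on the approach---the paper simply outsources it.

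Two small remarks on your sketch. First, the countability of each $\mathscr{A}_k$ has nothing to do with $\mu(X)=\infty$; it follows because measure doubling implies geometric doubling, hence separability, so any $\delta^k$-separated set is countable. Second, in the actual Hyt\"onen--Kairema argument the centre sets are taken to be nested across levels (each $x^k_\alpha$ reappears as some $x^{k+1}_\beta$), and the parent rule is not purely ``nearest neighbour'' but is arranged so that a point always prefers its previous copy when possible; this is what makes the specific constants $a_1=(3A_0^2)^{-1}$, $A_1=2A_0$ come out exactly and what allows the distinguished-centre-point property \eqref{eq:fixedpoint} that the paper later uses. Your nearest-parent rule with tie-breaking would give \emph{some} constants, but matching the stated ones and ensuring the boundary bookkeeping is consistent across scales needs the extra care in \cite{HK} that you rightly flag as ``the main obstacle.''
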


We also recall the following remark from \cite[Section 2.3]{KLPW}.
The construction of dyadic cubes requires their centre points
and an associated partial order be fixed \textit{a priori}.
However, if either the centre points or the partial order is
not given, their existence already follows from the
assumptions; any given system of points and partial order can
be used as a starting point. Moreover, if we are allowed to
choose the centre points for the cubes, the collection can be
chosen to satisfy the additional property that a fixed point
becomes a centre point at \textit{all levels}:
\begin{equation}\label{eq:fixedpoint}
\begin{split}
    &\text{given a fixed point } x_0\in X, \text{ for every } k\in \Z,
        \text{ there exists }\alpha \text{ such that } \\
    & x_0
        = x^k_\alpha,\text{ the centre point of }
        Q^k_\alpha\in\mathscr{D}_k.
\end{split}
\end{equation}

\subsection{Adjacent Systems of Dyadic Cubes}

On $(X,d,\mu)$, a
finite collection $\{\mathscr{D}^t\colon t=1,2,\ldots ,\mathpzc T\}$ of the dyadic 
families  is called a \textit{collection of
adjacent systems of dyadic cubes with parameters} $\delta\in
(0,1), 0<a_1\leq A_1<\infty$ and $1\leq C_{adj}<\infty$ if it has the
following properties: individually, each $\mathscr{D}^t$ is a
system of dyadic cubes with parameters $\delta\in (0,1)$ and $0
< a_1 \leq A_1 < \infty$; collectively, for each ball
$B(x,r)\subseteq X$ with $\delta^{k+3}<r\leq\delta^{k+2},
k\in\Z$, there exist $t \in \{1, 2, \ldots, \mathpzc T\}$ and
$Q\in\mathscr{D}^t$ of generation $k$ and with centre point
$^tx^k_\alpha$ such that $d(x,{}^tx_\alpha^k) <
2A_0\delta^{k}$ and
\begin{equation}\label{eq:ball;included}
    B(x,r)\subseteq Q\subseteq B(x,C_{adj}r).
\end{equation}

We recall from \cite{HK} the following construction.

\begin{thm}\label{thm:existence2}
    Let $(X,d,\mu)$ be a  space of homogeneous type.
    Then there exists a collection $\{\mathscr{D}^t\colon
    t = 1,2,\ldots ,\mathpzc T\}$ of adjacent systems of dyadic cubes with
    parameters $\delta\in (0, (96A_0^6)^{-1}), a_1 := (12A_0^4)^{-1},
    A_1 := 4A_0^2$ and $C := 8A_0^3\delta^{-3}$. The centre points
    $^tx^k_\alpha$ of the cubes $Q\in\mathscr{D}^t_k$ have, for each
    $t\in\{1,2,\ldots,\mathpzc T\}$, the two properties
    \begin{equation*}
        d(^tx_{\alpha}^k, {}^tx_{\beta}^k)
        \geq (4A_0^2)^{-1}\delta^k\quad(\alpha\neq\beta),\qquad
        \min_{\alpha}d(x,{}^tx^k_{\alpha})
        < 2A_0\delta^k\quad \text{for all}~x\in X.
    \end{equation*}
    Moreover, these adjacent systems can be constructed in such a
    way that each $\mathscr{D}^t$ satisfies the distinguished
    centre point property \eqref{eq:fixedpoint}.
\end{thm}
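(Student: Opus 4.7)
The plan is to construct each adjacent system $\mathscr{D}^t$ separately by invoking Theorem \ref{theorem dyadic cubes}, so the whole task reduces to choosing the initial families of center points $\{{}^tx^k_\alpha\}_\alpha$ at each level $k$ for each $t\in\{1,\ldots,\mathpzc T\}$. For each fixed $t$, I would take $\{{}^tx^k_\alpha\}_\alpha$ to be a maximal $(4A_0^2)^{-1}\delta^k$-separated subset of $X$, containing a prescribed reference point $x_0$ at every level (in order to retain property \eqref{eq:fixedpoint}); the individual per-system properties in the statement then follow directly from Theorem \ref{theorem dyadic cubes} applied with the modified parameters $a_1=(12A_0^4)^{-1}$, $A_1=4A_0^2$, once one verifies that the numerical relations among $\delta,A_0,a_1,A_1$ remain compatible with the quasi-triangle inequality \eqref{eqn:quasitriangleineq}. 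The heart of the proof is the collective adjacency property.

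For the adjacency property, the goal is that at each level $k$ every $x\in X$ lies within distance $2A_0\delta^k$ of some center ${}^tx^k_\alpha$ for at least one $t$. I would produce the $\mathpzc T$ families inductively: start with an arbitrary maximal $(4A_0^2)^{-1}\delta^k$-separated set $\mathscr{X}^1_k$; observe that points $x$ with $\dist(x,\mathscr{X}^1_k)\ge 2A_0\delta^k$ form a subset whose local cardinality on any ball of radius $O(\delta^k)$ is controlled by the doubling bound \eqref{upper dimension}; from this uncovered region extract $\mathscr{X}^2_k$ as a maximal separated subset (augmented with enough pre-existing centers to remain globally maximal at scale $(4A_0^2)^{-1}\delta^k$), and iterate. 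A pigeonhole/doubling argument using the scale-invariant ratio $2A_0\delta^k/((4A_0^2)^{-1}\delta^k)=8A_0^3$ shows that after $\mathpzc T=\mathpzc T(C_\mu,A_0,\delta)$ iterations every $x$ is captured by some $\mathscr{X}^t_k$, with $\mathpzc T$ independent of $k$. Running Theorem \ref{theorem dyadic cubes} independently on each family then produces the systems $\mathscr{D}^1,\ldots,\mathscr{D}^{\mathpzc T}$.

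Finally, to verify \eqref{eq:ball;included}: given $B(x,r)$ with $\delta^{k+3}<r\le \delta^{k+2}$, pick $t$ and ${}^tx^k_\alpha$ with $d(x,{}^tx^k_\alpha)<2A_0\delta^k$, and let $Q\in\mathscr{D}^t_k$ be the cube containing ${}^tx^k_\alpha$. The inclusion $B(x,r)\subseteq Q$ reduces, via $B({}^tx^k_\alpha,a_1\delta^k)\subseteq Q$ and the quasi-triangle inequality, to the arithmetic check $A_0(r+2A_0\delta^k)\le a_1\delta^k$, which holds precisely because $\delta<(96A_0^6)^{-1}$ and $a_1=(12A_0^4)^{-1}$. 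The reverse inclusion $Q\subseteq B(x,C_{adj}r)$ follows from $Q\subseteq B({}^tx^k_\alpha,A_1\delta^k)$ together with $A_0(A_1\delta^k+2A_0\delta^k)\le C_{adj}\,r$, using $r>\delta^{k+3}$ and $C_{adj}=8A_0^3\delta^{-3}$.

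The main obstacle is proving that $\mathpzc T$ can be chosen as a single constant valid uniformly in $k$ rather than a level-dependent quantity. This reduces to a scale-invariance observation: the ratio between the covering radius $2A_0\delta^k$ and the separation radius $(4A_0^2)^{-1}\delta^k$ equals $8A_0^3$, independent of $k$, so the doubling condition \eqref{upper dimension} controls the local combinatorial complexity uniformly across scales, allowing the same $\mathpzc T$ to succeed at every level.
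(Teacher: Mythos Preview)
The paper does not prove this theorem; it is quoted verbatim from Hyt\"onen--Kairema~\cite{HK}. So there is no ``paper's proof'' to compare against, but your proposal still has a genuine gap.

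You have misidentified what the \emph{collective} property is. The condition ``every $x$ lies within $2A_0\delta^k$ of some center ${}^tx^k_\alpha$'' is \emph{not} the adjacency property---it is already an individual per-system property (the second displayed property in the statement), and in fact it holds automatically for any maximal $(4A_0^2)^{-1}\delta^k$-separated set, since maximality forces every point to be within $(4A_0^2)^{-1}\delta^k<2A_0\delta^k$ of the set. In particular, your iterative scheme ``points with $\dist(x,\mathscr{X}^1_k)\ge 2A_0\delta^k$'' starts from the empty set and would terminate with $\mathpzc T=1$.

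The real collective requirement is~\eqref{eq:ball;included}: for each ball $B(x,r)$ there must be a cube $Q$ in \emph{some} system with $B(x,r)\subseteq Q$. Your reduction of this to the arithmetic check $A_0(r+2A_0\delta^k)\le a_1\delta^k$ is incorrect: even at $r=0$ this would require $2A_0^2\le (12A_0^4)^{-1}$, i.e.\ $24A_0^6\le 1$, which fails for $A_0\ge 1$. The inner ball $B({}^tx^k_\alpha,a_1\delta^k)$ is much too small to contain $B(x,r)$ when $x$ is merely within $2A_0\delta^k$ of the center. The obstruction is precisely that $B(x,r)$ may straddle the boundary between two cubes of a given system; knowing the center is nearby does not rule this out. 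The Hyt\"onen--Kairema construction resolves this by building the $\mathpzc T$ systems from center-point families that are \emph{shifted} relative to one another (via a finite set of translation parameters at each scale), so that for any $x$ at least one system has its level-$k$ boundaries far from $x$. Your proposal lacks this shifting mechanism, and without it the inclusion $B(x,r)\subseteq Q$ cannot be guaranteed.
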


We recall from \cite[Remark 2.8]{KLPW}
that the number $\mathpzc T$ of the adjacent systems of dyadic
    cubes as in the theorem above satisfies the estimate
    \begin{equation*}
        \mathpzc T
        = \mathpzc T(A_0,\widetilde A_1,\delta)
        \leq \widetilde A_1^6(A_0^4/\delta)^{\log_2\widetilde A_1},
    \end{equation*}
where $\widetilde A_1$ is the geometrically doubling constant, see \cite[Section 2]{KLPW}.

%

\subsection{An Explicit Haar Basis on Spaces of Homogeneous Type}

Next we recall the explicit construction in \cite{KLPW} of a Haar basis
$\{h_{Q}^{\epsilon}: Q\in \mathscr{D}, \epsilon = 1,\dots,M_Q - 1\}$ for $L^p(X,\mu)$,
$1 < p < \infty$, associated to the dyadic cubes
$Q\in\mathscr{D}$ as follows. Here $M_Q := \#\ch(Q) = \# \{R\in
\mathscr{D}_{k+1}\colon R\subseteq Q\}$ denotes the number of
dyadic sub-cubes (``children'') the cube $Q\in \mathscr{D}_k$
has; namely $\mathcal{H}(Q)$ is the collection of dyadic children of $Q$.

\begin{thm}[\cite{KLPW}]\label{thm:convergence}
    Let $(X,d,\mu)$ be a
    space of homogeneous type and suppose $\mu$ is a positive Borel measure on $X$
    with the property that $\mu(B) < \infty$ for all balls
    $B\subseteq X$. For $1 < p < \infty$, for each $f\in
    L^p(X,\mu)$, we have
    \[
        f(x)
        =  \sum_{Q\in\mathscr{D}}\sum_{\epsilon=1}^{M_Q-1}
            \langle f,h^\epsilon_Q\rangle h^\epsilon_Q(x), 
    \]
    where the sum converges (unconditionally) both in the
    $L^p(X,\mu)$-norm and pointwise $\mu$-almost everywhere.
 \end{thm}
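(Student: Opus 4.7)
The plan is to view the Haar expansion as the telescoping sum of martingale differences for the dyadic filtration $\{\mathscr{F}_k\}_{k\in\Z}$, where $\mathscr{F}_k$ is the $\sigma$-algebra generated by $\mathscr{D}_k$. Define the conditional expectation $\mathbb{E}_k f := \sum_{Q\in\mathscr{D}_k}\langle f\rangle_Q\,\unit_Q$, where $\langle f\rangle_Q = \mu(Q)^{-1}\int_Q f\,d\mu$. The first step is to construct, for each $Q\in\mathscr{D}$ with children $\ch(Q)=\{R_1,\ldots,R_{M_Q}\}$, an orthonormal basis $\{h_Q^\epsilon\}_{\epsilon=1}^{M_Q-1}$ of the $(M_Q-1)$-dimensional space
\[
W_Q := \Big\{g\in L^2(Q,\mu): g \text{ is constant on each } R_i,\ \int_Q g\,d\mu = 0\Big\},
\]
for instance via Gram--Schmidt applied to $\unit_{R_1},\ldots,\unit_{R_{M_Q}}$ after projecting out the constants. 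By construction, $\sum_{\epsilon=1}^{M_Q-1}\langle f,h_Q^\epsilon\rangle h_Q^\epsilon = \mathbb{E}_{k+1}f\,\unit_Q - \langle f\rangle_Q\,\unit_Q$ whenever $Q\in\mathscr{D}_k$, so summing over $Q\in\mathscr{D}_k$ yields the martingale difference $\mathbb{E}_{k+1}f - \mathbb{E}_k f$.

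The second step is to establish convergence of the partial sums $S_{j,k}f := \sum_{k\leq \ell < j}(\mathbb{E}_{\ell+1}f - \mathbb{E}_\ell f) = \mathbb{E}_j f - \mathbb{E}_k f$ as $k\to-\infty$ and $j\to+\infty$. For $j\to+\infty$: since each point of $X$ lies in an arbitrarily small dyadic cube by \eqref{eq:contain} and $\mu(\{x_0\})=0$, the filtration $\bigvee_k \mathscr{F}_k$ is (essentially) the Borel $\sigma$-algebra, so the Doob martingale convergence theorem on the $\sigma$-finite space $(X,\mu)$ gives $\mathbb{E}_j f \to f$ both in $L^p$ and $\mu$-a.e.\ (using $L^p$-contractivity of conditional expectation plus Doob's maximal inequality, which supplies the dominating function needed to pass from a dense subclass of $f\in L^p\cap L^\infty$ with bounded support to general $f\in L^p$). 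For $k\to-\infty$: the cubes $Q\in\mathscr{D}_k$ have radii comparable to $\delta^k\to\infty$, so by the doubling hypothesis and $\mu(X)=\infty$, $\mu(Q)\to\infty$, which forces $\langle |f|\rangle_Q\to 0$ for $f\in L^p$ (Hölder) and hence $\mathbb{E}_k f\to 0$ in $L^p$; a straightforward Borel--Cantelli/maximal-function argument promotes this to a.e.\ convergence.

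The third step is to upgrade norm convergence of the symmetric partial sums to unconditional convergence. Here the key input is the Burkholder inequality on the probability-free setting of $\sigma$-finite measure spaces: for any choice of signs $\varepsilon_Q^\epsilon \in\{\pm 1\}$,
\[
\Big\|\sum_{Q,\epsilon}\varepsilon_Q^\epsilon\,\langle f,h_Q^\epsilon\rangle h_Q^\epsilon\Big\|_{L^p(X,\mu)} \lesssim_p \|f\|_{L^p(X,\mu)}.
\]
Combined with the orthonormality (giving $L^2$ unconditional convergence on the dense class $L^2\cap L^p$) and the uniform bound above, a standard $3\varepsilon$ approximation argument yields unconditional $L^p$ convergence of the Haar expansion for every $f\in L^p(X,\mu)$. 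Pointwise a.e.\ convergence follows from the same decomposition combined with Doob's maximal theorem.

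The main obstacle I anticipate is the $k\to-\infty$ limit: one must exploit $\mu(X)=\infty$ together with \eqref{doubling condition} and \eqref{eq:contain} to guarantee that $\mu(Q_x^k)\to\infty$ for $\mu$-a.e.\ $x$ as $k\to-\infty$, which is what produces the vanishing of the coarse averages and eliminates any ``constant-at-infinity'' contribution to the expansion. A secondary technicality is handling the Gram--Schmidt construction uniformly in $Q$ so that the resulting $h_Q^\epsilon$ satisfy the size and cancellation bounds needed to invoke the martingale inequalities, which is where the geometric bound $M_Q\leq M$ from the dyadic construction in Section~\ref{sec:dyadic_cubes} becomes essential.
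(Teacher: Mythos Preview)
The paper does not prove this theorem; it is quoted verbatim from \cite{KLPW} as a black box, so there is no ``paper's own proof'' to compare against. Your outline is the standard martingale route and is essentially the argument carried out in \cite{KLPW}: identify $\sum_{\epsilon=1}^{M_Q-1}\langle f,h_Q^\epsilon\rangle h_Q^\epsilon$ with $\mathbb{D}_Q f$ (this identity is in fact recorded in the paper immediately after Theorem~\ref{prop:HaarFuncProp}), telescope to $\mathbb{E}_j f - \mathbb{E}_k f$, and let $j\to+\infty$, $k\to-\infty$.

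Two small remarks on execution. First, your worry about $k\to-\infty$ is the right one, and your fix is correct: for each $x$ the cube $Q_x^k\in\mathscr{D}_k$ containing $x$ satisfies $Q_x^k\supseteq B(x_\alpha^k,a_1\delta^k)$ with $d(x,x_\alpha^k)\le A_1\delta^k$, so by doubling $\mu(Q_x^k)$ is comparable to $\mu(B(x,\delta^k))$, which tends to $\mu(X)=\infty$; hence $\langle f\rangle_{Q_x^k}\to 0$ by H\"older. Second, for unconditionality you invoke Burkholder on a $\sigma$-finite (not probability) space; this is fine, but the cleanest justification is to restrict to a large cube $Q_0$, apply the classical Burkholder--Davis--Gundy inequality on the probability space $(Q_0,\mu(Q_0)^{-1}\mu)$, and note that the resulting constants are independent of $Q_0$. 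The uniform bound $M_Q\le M$ is indeed what makes the Gram--Schmidt Haar functions satisfy the size estimates in Theorem~\ref{prop:HaarFuncProp}(vii)--(viii), though for the convergence statement itself you only need orthonormality and the martingale-difference identity, not the pointwise bounds.
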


The following theorem collects several basic properties of the
functions $h_{Q}^{\epsilon}$.

\begin{thm}[\cite{KLPW}]\label{prop:HaarFuncProp}
    The Haar functions $h_{Q}^{\epsilon}$, $Q\in\mathscr{D}$,
    $\epsilon = 1,\ldots,M_Q - 1$, have the following properties:
    \begin{itemize}
        \item[(i)] $h_{Q}^{\epsilon}$ is a simple Borel-measurable
            real function on $X$;
        \item[(ii)] $h_{Q}^{\epsilon}$ is supported on $Q$;
        \item[(iii)] $h_{Q}^{\epsilon}$ is constant on each
            $R\in\ch(Q)$;
        \item[(iv)] $\int h_{Q}^{\epsilon}\, d\mu = 0$ (cancellation);
        \item[(v)] $\langle h_{Q}^{\epsilon},h_Q^{\epsilon'}\rangle = 0$ for
            $\epsilon \neq \epsilon'$, $\epsilon$, $\epsilon'\in\{1, \ldots, M_Q - 1\}$;
        \item[(vi)] the collection
            $
                \big\{\mu(Q)^{-1/2}1_Q\big\}
                \cup \{h_{Q}^{\epsilon} : \epsilon = 1, \ldots, M_Q - 1\}
            $
            is an orthogonal basis for the vector
            space~$V(Q)$ of all functions on $Q$ that are
            constant on each sub-cube $R\in\ch(Q)$;
        \item[(vii)] 
        if $h_{Q}^{\epsilon}\not\equiv 0$ then
            $
                \Norm{h_{Q}^{\epsilon}}{L^p(X,\mu)}
                \approx \mu(Q_{\epsilon})^{\frac{1}{p} - \frac{1}{2}}
                \quad \text{for}~1 \leq p \leq \infty;
            $
        \item[(viii)] 
                \hspace{4cm}
                $\Norm{h_{Q}^{\epsilon}}{L^1(X,\mu)}\cdot
                \Norm{h_{Q}^{\epsilon}}{L^\infty(X,\mu)} \approx 1$.
    \end{itemize}
\end{thm}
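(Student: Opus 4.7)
The plan is to build the Haar basis cube-by-cube, exploiting the finite-dimensional structure at each dyadic scale. Fix $Q \in \mathscr{D}$ with children $\ch(Q) = \{R_1, \ldots, R_{M_Q}\}$; the space $V(Q)$ of functions constant on each child has dimension $M_Q$, and its codimension-one subspace $V_0(Q) := \{f \in V(Q) : \int f\,d\mu = 0\}$ has dimension $M_Q - 1$. I would construct the Haar functions $h_Q^\epsilon$ as an orthogonal basis of $V_0(Q)$, following \cite{KLPW} via an iterative pairing procedure that organizes the $M_Q$ children into a binary tree so that each $h_Q^\epsilon$ is a signed combination $c_+\mathbf{1}_{A_\epsilon} - c_-\mathbf{1}_{B_\epsilon}$ on two disjoint groups of children $A_\epsilon, B_\epsilon \subseteq \ch(Q)$ corresponding to a node of the tree.

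Properties (i)--(iv) are immediate from this construction: $h_Q^\epsilon$ is simple, supported in $Q$, constant on each child, and satisfies $\int h_Q^\epsilon\,d\mu = 0$ since one imposes $c_+\mu(A_\epsilon) = c_-\mu(B_\epsilon)$. For orthogonality (v), whenever $\epsilon \ne \epsilon'$ the pairs $(A_\epsilon, B_\epsilon)$ and $(A_{\epsilon'}, B_{\epsilon'})$ are related by the binary tree, so either their supports are disjoint, or one Haar function is constant on the support of the other; in the latter case the cancellation of the outer function forces $\langle h_Q^\epsilon, h_Q^{\epsilon'}\rangle = 0$. For (vi), a dimension count suffices: $\{\mu(Q)^{-1/2}\mathbf{1}_Q\} \cup \{h_Q^\epsilon\}$ is an orthogonal collection of $M_Q$ nonzero vectors inside the $M_Q$-dimensional space $V(Q)$, hence a basis.

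The main technical point is (vii). After $L^2$-normalization one has $c_+^2 \mu(A_\epsilon) + c_-^2 \mu(B_\epsilon) = 1$ together with $c_+\mu(A_\epsilon) = c_-\mu(B_\epsilon)$, from which
\[
\|h_Q^\epsilon\|_{L^p(X,\mu)}^p = c_+^p \mu(A_\epsilon) + c_-^p \mu(B_\epsilon).
\]
Arranging the tree so that $\mu(A_\epsilon) \approx \mu(B_\epsilon)$ at every node — which can be enforced by a greedy balancing rule on the children, using \eqref{Cmu0} so that any two children of a common parent have comparable measure — then setting $Q_\epsilon$ to be whichever of $A_\epsilon, B_\epsilon$ has the smaller measure yields $c_+ \approx c_- \approx \mu(Q_\epsilon)^{-1/2}$, so $\|h_Q^\epsilon\|_{L^p} \approx \mu(Q_\epsilon)^{1/p - 1/2}$ uniformly for $1 \le p \le \infty$, with constants depending only on $C_\mu$ and $C_{\mu,0}$. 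Property (viii) then follows immediately by multiplying (vii) at $p=1$ and $p=\infty$, since the factors $\mu(Q_\epsilon)^{1/2}$ and $\mu(Q_\epsilon)^{-1/2}$ cancel.

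I expect the main obstacle to lie in engineering the pairing tree so that the comparability $\mu(A_\epsilon) \approx \mu(B_\epsilon)$ in (vii) genuinely holds at every node — not merely in aggregate — while simultaneously producing exactly $M_Q - 1$ functions whose indexing is internal to $Q$. This is precisely where the specific combinatorial scheme of \cite{KLPW} and the uniform bound \eqref{Cmu0} are used; without a balanced tree, one would only obtain a weaker estimate with $Q_\epsilon$ inflated to an envelope of $A_\epsilon \cup B_\epsilon$, which would be too lossy for the paraproduct and sparse-domination arguments in Sections \ref{s4}--\ref{s5}.
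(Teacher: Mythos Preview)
The paper does not prove this theorem at all; it is quoted verbatim from \cite{KLPW} as a cited result. However, the explicit formula for $h_Q^\epsilon$ does appear later, in the proof of Lemma~\ref{lem A2}, and it reveals that the actual \cite{KLPW} construction is \emph{not} the balanced binary tree you describe. Rather, it is a sequential ``comb'': one enumerates the children $Q_1,\ldots,Q_{M_Q}$, sets $E_\epsilon := Q_\epsilon \cup Q_{\epsilon+1}\cup\cdots\cup Q_{M_Q}$, and takes
\[
h_Q^\epsilon = a_\epsilon \chi_{Q_\epsilon} - b_\epsilon \chi_{E_{\epsilon+1}},\qquad
a_\epsilon = \sqrt{\tfrac{\mu(E_{\epsilon+1})}{\mu(Q_\epsilon)\mu(E_\epsilon)}},\quad
b_\epsilon = \sqrt{\tfrac{\mu(Q_\epsilon)}{\mu(E_\epsilon)\mu(E_{\epsilon+1})}}.
\]
So in your notation, $A_\epsilon = Q_\epsilon$ is always a \emph{single} child and $B_\epsilon = E_{\epsilon+1}$ is the tail. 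No balancing is attempted, and none is needed: since every child of $Q$ has measure comparable to $\mu(Q)$ by \eqref{Cmu0}, one has $\mu(Q_\epsilon)\lesssim\mu(E_{\epsilon+1})$ and $\mu(E_\epsilon)\approx\mu(E_{\epsilon+1})$ automatically, which gives $a_\epsilon\approx\mu(Q_\epsilon)^{-1/2}$ and $b_\epsilon\lesssim\mu(Q_\epsilon)^{-1/2}$, whence (vii) with $Q_\epsilon$ literally the $\epsilon$-th child.

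Your balanced-tree approach would also work and yields the same conclusions, but the ``main obstacle'' you anticipate---engineering $\mu(A_\epsilon)\approx\mu(B_\epsilon)$ at every node---is a complication you have introduced yourself; the sequential construction sidesteps it entirely by fixing $A_\epsilon$ to be one child. What the balanced tree buys is a slightly tighter constant in (vii) independent of $M_Q$, but since $M_Q\le M$ is uniformly bounded in this setting, that gain is immaterial.
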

As stated in \cite{KLPW}, we also have $h_Q^0:= \mu(Q)^{-1/2}1_Q$ which is a non-cancellative Haar function.
Moreover, the martingale associated with the Haar functions are as follows: for $Q \in \mathscr{D}_k$,
$$ \mathbb{E}_Qf = \langle f,h_Q^0\rangle h_Q^0
\quad {\rm and}\quad
\mathbb{D}_Qf =\sum_{\epsilon=1}^{M_Q-1} \mathbb{D}_{Q}^{\epsilon}f, $$
where $\mathbb{D}_{Q}^{\epsilon}=\langle f,h_{Q}^{\epsilon}\rangle h_{Q}^{\epsilon}$ is the martingale operator associated with the $\epsilon$-th subcube of $Q$. Also we have
$$
   \mathbb{E}_kf =\sum_{Q\in \mathscr{D}_k}\mathbb{E}_Qf \quad{\rm and} \quad \mathbb{D}_kf = \mathbb{E}_{k+1}f- \mathbb{E}_kf.
$$
Hence, based on the construction of Haar system $\{h_Q^{\epsilon}\}$ in \cite{KLPW} we obtain that for each $R\in\mathscr D$,
\begin{align*}
\sum_{Q:\ R\subset Q} \sum_{\epsilon=1}^{M_{Q}-1} \langle f, h_Q^{\epsilon}\rangle h_Q^{\epsilon} h_R^\eta= \sum_{Q:\ R\subset Q} \mathbb{D}_Qf\cdot h_R^{\eta}
= \mathbb{E}_Rf\cdot h_R^{\eta} = \langle f,h_R^0\rangle h_R^0h_R^{\eta}.
\end{align*}

%
%

\subsection{Muckenhoupt $A_p$ Weights}

\begin{defn}
  \label{def:Ap}
  Let $w(x)$ be a nonnegative locally integrable function
  on~$X$. For $1 < p < \infty$, we
  say $w$ is an $A_p$ \emph{weight}, written $w\in
  A_p$, if
  \[
    [w]_{A_p}
    := \sup_B \left(\intav_B w\right)
    \left(\intav_B
      \left(\dfrac{1}{w}\right)^{1/(p-1)}\right)^{p-1}
    < \infty.
  \]
  Here the supremum is taken over all balls~$B\subset X$. The quantity $[w]_{A_p}$ is called the \emph{$A_p$~constant
  of~$w$}.
  For $p = 1$, we say $w$ is an $A_1$ \emph{weight},
  written $w\in A_1$, if $M(w)(x)\leq w(x)$ for a.e. $x\in X$, and let $A_\infty := \cup_{1\leq p<\infty} A_p$ and we have
    \(
    [w]_{A_\infty}
    := \sup_B \left(\intav_B w\right)
    \exp\left(\intav_B \log \left(\frac{1}{w}\right) \right)
    < \infty.
  \)

\end{defn}

Next we note that for $w\in A_p$ the measure $w(x)d\mu(x)$ is a doubling measure on $X$. To be more precise, we have
that for all $\lambda>1$ and all balls $B\subset X$,
\begin{align*}
w(\lambda B)\leq \lambda^{np}[w]_{A_p}w(B),
\end{align*}
where $n$ is the upper dimension of the measure $\mu$, as in \eqref{upper dimension}.

We also point out that for $w\in A_\infty$, there exists $\gamma>0$ such that for every ball $B$,
$$ \mu\Big( \Big\{x\in B: \ w(x)\geq\gamma \intav_B w\Big\} \Big)\geq {1\over 2}\mu(B). $$
And this implies that for every ball $B$ and for all $\delta\in(0,1)$,
\begin{align}\label{e-reverse holder}
\intav_B w\le C \left(\intav_B w^\delta\right)^{1/\delta};
\end{align}
see  also \cite{LOR2}.

\section{Sparse Operators and Domination of Commutators on Spaces of Homogeneous Type}\label{s4}

Let $\mathcal D$ be a system of dyadic cubes on $X$ as in Section 2.1.  As in the Euclidean setting, we have two competing versions of sparsity for a collection of sets, one geometric and the other a Carleson measure condition.

\begin{defn} \label{D:Sparse}
Given $0<\eta<1$, a collection $\mathcal S \subset \mathcal D$ of dyadic cubes is said to be $\eta$-sparse provided that for every $Q\in\mathcal S$, there is a measurable subset $E_Q \subset Q$ such that
$\mu(E_Q) \geq \eta \mu(Q)$ and the sets $\{E_Q\}_{Q\in\mathcal S}$ have only finite overlap.
\end{defn}

\begin{defn} \label{D:Carleson}
Given 
$\Lambda>1$, a collection $\mathcal S \subset \mathcal D$ of dyadic cubes is said to be $\Lambda$-Carleson if for
every cube $Q\in\mathcal D$,
$$ \sum_{P\in\mathcal S, P\subseteq Q}\mu(P)\leq \Lambda \mu(Q). $$
\end{defn}

We first show that the above two definitions are equivalent in a space of homogeneous type.  The proof closely follows the original idea in \cite{LN} with some minor modifications.
\begin{thm}\label{thm sparse}
Given $0<\eta<1$ and a collection $\mathcal S \subset \mathcal D$ of dyadic cubes, the following statements hold:
\begin{itemize}
\item If $\mathcal{S}$ is $\eta$-sparse, then $\mathcal{S}$ is ${c\over\eta}$-Carleson.
\item If $\mathcal{S}$ is ${1\over\eta}$-Carleson, then $\mathcal{S}$ is $\eta$-sparse.
\end{itemize}
\end{thm}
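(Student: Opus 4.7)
The two implications are handled separately. The easier direction, $\eta$-sparse implies $(c/\eta)$-Carleson, follows by fixing $Q\in\mathcal D$, using the defining sets $E_P\subseteq P$ with $\mu(E_P)\geq\eta\mu(P)$ for each $P\in\mathcal S$ with $P\subseteq Q$, and exploiting the finite (bounded) overlap of $\{E_P\}_{P\in\mathcal S}$:
\[
    \sum_{P\in\mathcal S,\,P\subseteq Q}\mu(P)
    \leq \eta^{-1}\sum_{P\in\mathcal S,\,P\subseteq Q}\mu(E_P)
    \leq \frac{c}{\eta}\,\mu(Q),
\]
where $c$ denotes the overlap constant and the $E_P$'s all lie inside $Q$.

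For the harder direction, $(1/\eta)$-Carleson implies $\eta$-sparse, I would follow the Lerner--Nazarov strategy. For each $Q\in\mathcal S$, introduce the \emph{overlap density} $f_Q:=\sum_{P\in\mathcal S,\,P\subsetneq Q}\mathbf{1}_P$ and the threshold set $E_Q:=\{x\in Q:f_Q(x)<1/\eta\}$. The Carleson hypothesis then gives
\[
    \int_Q f_Q\,d\mu
    =\sum_{P\in\mathcal S,\,P\subsetneq Q}\mu(P)
    =\sum_{P\in\mathcal S,\,P\subseteq Q}\mu(P)-\mu(Q)
    \leq \Bigl(\tfrac{1}{\eta}-1\Bigr)\mu(Q),
\]
and Chebyshev's inequality forces $\mu(Q\setminus E_Q)\leq\eta\int_Q f_Q\leq(1-\eta)\mu(Q)$, i.e.\ $\mu(E_Q)\geq\eta\mu(Q)$ as required.

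To control the overlap of $\{E_Q\}_{Q\in\mathcal S}$, I would fix $x\in X$ and list the nested chain $Q_0\supsetneq Q_1\supsetneq\cdots\supsetneq Q_N$ of $\mathcal S$-cubes containing $x$. A direct count yields $f_{Q_k}(x)=N-k$, so $x\in E_{Q_k}$ exactly when $k>N-1/\eta$, which leaves at most $\lceil 1/\eta\rceil$ admissible indices. Hence the overlap is bounded by $\lceil 1/\eta\rceil$, completing the sparseness condition.

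The main obstacle, and the only ``minor modification'' required by the space-of-homogeneous-type setting, is ensuring the chains above are $\mu$-a.e.\ finite (a priori they could be infinite in the absence of the Lebesgue structure of $\mathbb R^n$). This is handled cleanly by the displayed computation: since $f_Q\in L^1(Q,d\mu)$, one has $f_Q<\infty$ $\mu$-almost everywhere, which rules out infinite chains outside a $\mu$-null set. Beyond this point the argument invokes only the nesting of the dyadic system of Theorem \ref{theorem dyadic cubes} and the parent--child doubling \eqref{Cmu0}, so the Euclidean proof transfers essentially verbatim.
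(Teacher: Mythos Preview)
Your argument for both directions is correct, but for the Carleson $\Rightarrow$ sparse implication you take a genuinely different route from the paper (and, incidentally, not the one Lerner--Nazarov actually use, despite your attribution). The paper follows the original Lerner--Nazarov construction: it first treats the case of a bottom layer by building \emph{pairwise disjoint} sets $E_Q$ inductively from the bottom up, choosing $E_Q\subset Q\setminus\bigcup_{R\subsetneq Q}E_R$ of the exact measure $\Lambda^{-1}\mu(Q)$; then, for the general case, it introduces auxiliary sets $\mathcal Q(t,Q)=B(x_Q,t\delta^k)\cap Q$, uses the continuity of $t\mapsto\mu(\mathcal Q(t,Q))$ to select the right radius, shows the resulting $\widehat E_Q^{(K)}$ are monotone in $K$, and passes to the limit $K\to\infty$. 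The payoff is that the resulting $E_Q$'s are genuinely disjoint.

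Your level-set/Chebyshev approach is considerably shorter and more elementary, but it produces sets $E_Q$ with overlap bounded by $\lceil 1/\eta\rceil$ rather than disjoint sets. Since Definition~\ref{D:Sparse} in this paper only requires finite (bounded) overlap, your construction still meets the definition, so the theorem is proved either way. One minor remark: your discussion of the ``main obstacle'' is a red herring. You do not need the chains to be finite for the overlap bound; if the chain of $\mathcal S$-cubes below $x$ is infinite then $f_{Q_k}(x)=\infty$ for every $k$, so $x$ lies in no $E_Q$ at all and the overlap at $x$ is zero. The finiteness of $f_Q$ almost everywhere is automatic from $f_Q\in L^1$, but the argument does not hinge on it.
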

The reason for the extra constant $c$ in the above, is that for later parts of our argument, to control the commutator, we need to allow the sets $E_Q$ to have finite overlap.  If the sets $E_Q$ were exactly disjoint then one could take $c=1$ in the above and the statement would be cleaner and more in line with that in \cite{LN}.
\begin{proof}
Note that if a collection $\mathcal S \subset \mathcal D$  of dyadic cubes is $\eta$-sparse, that is for every $Q\in\mathcal S$, there is a measurable subset $E_Q \subset Q$ such that $\mu(E_Q) \geq \eta \mu(Q)$ and the sets $\{E_Q\}_{Q\in\mathcal S}$ have only finite overlap, we will have that $\mathcal S$ is $c\eta^{-1}$-Carleson according to Definition \ref{D:Carleson} (following from the standard computation and the constant $c$ denoting the amount of overlap of the sets $E_Q$).

Thus, it suffices to show that for $\Lambda>1$, if a collection $\mathcal S \subset \mathcal D$  of dyadic cubes is $\Lambda$-Carleson, then it is $\Lambda^{-1}$-sparse.  Here we will follow the proof in \cite{LN} but with some minor modifications.  To see this, we first point out that if the collection $\mathcal S \subset \mathcal D$  of dyadic cubes $\{Q\}$ has a bottom layer $\mathscr D_K$ for some fixed integer $K$, then it is direct to construct the set $E_Q$. We  begin with considering all dyadic cubes $\{Q\}\subset \mathcal S \cap \mathscr D_K$
and choose any measurable set $E_Q\subset Q$ of measure $\Lambda^{-1}\mu(Q)$ for them. We now just repeat this choice for each dyadic cube in upper layers one by one. To be more specific, for each $Q\in\mathcal S\cap \mathscr D_k$ with $k\leq K$, choose a set 
$$E_Q\subset Q\Big\backslash \bigcup_{R\in\mathcal S, R\subsetneq Q} E_R $$
such that $\mu(E_Q) = \Lambda^{-1}\mu(Q)$. We now show that such choice of $E_Q$ is possible. In fact, note that for every $Q\in \mathcal S$, we have
\begin{align*}
\mu\Big( \bigcup_{R\in\mathcal S, R\subsetneq Q} E_R \Big)\leq \Lambda^{-1} \sum_{R\in\mathcal S, R\subsetneq Q} \mu(R) \leq  \Lambda^{-1}  (\Lambda-1)\mu(Q)= (1-\Lambda^{-1})\mu(Q),
\end{align*}
where the last inequality follows from the $\Lambda$-Carleson condition and from the fact that $R\subsetneq Q$. This shows that 
$$ \mu\Big( Q\Big\backslash \bigcup_{R\in\mathcal S, R\subsetneq Q} E_R \Big)\geq \mu(Q) - (1-\Lambda^{-1})\mu(Q)= \Lambda^{-1}\mu(Q)$$
and so the choice of the top set $E_Q$ is always possible.

Next, we consider the case that there is no fixed bottom layer. We run the above construction with a particular choice for each $K=0,1,2,\ldots$ and then pass to the limit.  To begin with, fix $K\geq0$. For each $Q\in  \mathcal S \cap ( \cup_{k\leq K}\mathscr D_k )$, we define the sets
$\widehat{E}_Q^{(K)}$ inductively as follows. 

First, for each $Q\in  \mathcal S \cap \mathscr D_k $ with $k\leq K$, 
we  consider the auxiliary set 
$$  \mathcal Q(t,Q):=  B(x_Q, t\delta^k)\cap Q, \quad t\in (0,A_1), $$
where $x_Q$ is the centre point of $Q$ and $A_1$, $\delta$ are the constants as introduced in Section 2.1. From property \eqref{eq:contain}, it is clear that when $0<t<a_1$, then $B(x_Q, t\delta^k)\subset Q$  and when 
$t>A_1$, then $ Q\subset B(x_Q, t\delta^k)$; moreover, we have $\mu(B(x_Q, t\delta^k))\to0$ as $t\to 0^+$.

Now for $Q\in  \mathcal S \cap \mathscr D_K$,
from the above observations together with the continuity and monotonicity of the function $t\mapsto \mathcal Q(t,Q) =\mu(B(x_Q, t\delta^K)) \cap Q$, we conclude that  there must be some $t_{\Lambda,K,K}\in (0,A_1)$ such that
$\mu(B(x_Q, t_{\Lambda,K,K}\delta^K)\cap Q )= \Lambda^{-1}\mu(Q)$. Here and in what follows, we use the triple $(\Lambda,k,K)$ for the subscript of $t$, where $\Lambda$ denotes that the value of such $t$ depends on $\Lambda$,  $k$ denotes that $Q$ is in the layer $k$ and the last $K$ denotes that we start at the layer $K$.
We set 
$$\widehat{E}_Q^{(K)}:=\mathcal Q(t_{\Lambda,K,K},Q)=B(x_Q, t_{\Lambda,K,K}\delta^K)\cap Q.$$

Suppose now $\widehat{E}_R^{(K)}$ are already defined for every $R\in \mathcal S \cap ( \cup_{k+1\leq i\leq K}\mathscr D_i )$. We now define $\widehat{E}_Q^{(K)}$ for $Q\in \mathcal S \cap \mathscr D_k $ in the following manner.  We set
$$   \widehat{E}_Q^{(K)} : =   \mathcal Q(t_{\Lambda,k,K},Q) \bigcup F_Q^{(K)},$$
where
$$  F_Q^{(K)} := \bigcup_{  R\in \mathcal S \cap ( \cup_{k+1\leq i\leq K}\mathscr D_i ), R\subsetneq Q }  \widehat{E}_R^{(K)}  $$
and $t_{\Lambda,k,K}\in (0,A_1)$ is chosen such that the set 
$$ E_Q^{(K)}:=  \mathcal Q(t_{\Lambda,k,K},Q) \backslash F_Q^{(K)} $$
satisfies 
$\mu(  E_Q^{(K)} ) = \Lambda^{-1}\mu(Q) $.

Now we claim that $ \widehat{E}_Q^{(K)} \subset \widehat{E}_Q^{(K+1)}  $ for every  $Q\in \mathcal S \cap ( \cup_{k\leq K}\mathscr D_k )$. To see this, we note that for each $Q\in \mathcal S \cap \mathscr D_K$, $ \widehat{E}_Q^{(K)}$ is just the set $\mathcal Q(t_{\Lambda,K,K},Q)$.
On the other hand, $ \widehat{E}_Q^{(K+1)}$ contains the set $\mathcal Q(t_{\Lambda,K,K+1},Q)$ which has the same centre point as $\mathcal Q(t_{\Lambda,K,K},Q)$, but with $t_{\Lambda,K,K+1} \geq t_{\Lambda,K,K}$ since 
$$\mu\Big( \mathcal Q(t_{\Lambda,K,K+1},Q)   \backslash  \bigcup_{  R\in \mathcal S\cap \mathscr D_{K+1}, R\subsetneq Q }  \widehat{E}_R^{(K+1)}\Big)=\Lambda^{-1}\mu(Q) = \mu(   \mathcal Q(t_{\Lambda,K,K},Q)  ).$$
Hence, we see that for each $Q\in \mathcal S \cap \mathscr D_K$, we have $ \widehat{E}_Q^{(K)} \subseteq \widehat{E}_Q^{(K+1)}$.  Then, we proceed via backward induction. Assume that $ \widehat{E}_Q^{(K)} \subseteq \widehat{E}_Q^{(K+1)}$ for every $Q\in \mathcal S \cap ( \cup_{k<i\leq K}\mathscr D_i )$. Take any $Q\in \mathcal S\cap \mathscr D_k$. Then the inductive hypothesis implies that $F_Q^{(K)}\subseteq F_Q^{(K+1)}  $. Let $Q(t_{\Lambda,k,K},Q)$ be the set added to $F_Q^{(K)}$ when constructing $\widehat{E}_Q^{(K)} $. Then we have
$$ \mu\Big(  Q(t_{\Lambda,k,K},Q) \backslash F_Q^{(K+1)}   \Big) \leq  \mu\Big(  Q(t_{\Lambda,k,K},Q) \backslash F_Q^{(K)}   \Big)=\Lambda^{-1}\mu(Q), $$
which implies that
$  t_{\Lambda,k,K+1} \geq t_{\Lambda,k,K}  $. Thus, we have $Q(t_{\Lambda,k,K},Q)\subset Q(t_{\Lambda,k,K+1},Q)$, which yields $ \widehat{E}_Q^{(K)} \subseteq \widehat{E}_Q^{(K+1)}$, and hence the claim follows.

Now for $Q\in\mathcal S\cap \mathscr D_k$, we define
$$   \widehat E_Q:=\lim_{K\to\infty} \widehat{E}_Q^{(K)},   $$
which, by using the claim above, equals
$$ \bigcup_{K=k}^\infty  \widehat{E}_Q^{(K)} \subset Q. $$
Moreover, for each $K$ we have
$$ \mu( {E}_Q^{(K)}) =\mu\big(  \widehat{E}_Q^{(K)}\backslash  {F}_Q^{(K)}  \big)=\Lambda^{-1}\mu(Q).  $$
Note that the sets $ {F}_Q^{(K)}$ also form an increasing sequence (with respect to $K$), so for each $Q\in\mathcal S$, the limit set
$$E_Q:= \lim_{K\to\infty} E_Q^{(K)} = \widehat E_Q \backslash \big( \lim_{K\to\infty} F_Q^{(K)} \big) = \widehat E_Q \big\backslash \Big( \bigcup_{R\in\mathcal S, R\subsetneq Q} \widehat E_R \Big) $$
exists, and is contained in $Q$ and has the required measure. 
Moreover, all $E_Q$ are disjoint.  The proof of Theorem \ref{thm sparse} is complete.
\end{proof}

We now recall the well-known definition for sparse operators.
\begin{defn} \label{D:Sparse Operator}
Given $0<\eta<1$ and an $\eta$-sparse family $\mathcal S \subset \mathcal D$ of dyadic cubes. The
sparse operator $\mathcal A_{\mathcal S}$ is defined by
$$ \mathcal A_{\mathcal S}f(x):= \sum_{Q\in \mathcal S} f_Q \chi_Q(x).$$
\end{defn}
Following the proof of \cite[Theorem 3.1]{Moen}, we obtain that
\begin{align*}
\|\mathcal A_{\mathcal S}f\|_{L^p_w(X)}\leq C_{\eta,n,p}[w]_{A_p}^{\max\{1,{1\over p-1}\}} \|f\|_{L^p_w(X)}, \quad 1<p<\infty.
\end{align*}
Denote by $\Omega(b,B)$ the standard mean oscillation
\begin{align}\label{Omega oscillation}
\Omega(b,B):={1\over\mu(B)} \int_B|b(x)-b_B|d\mu(x).
\end{align}
\begin{lem}\label{lem b-bQ}
Given $0<\gamma<1$. Let $\mathcal D$ be a dyadic system in $X$ and let $\mathcal S\subset \mathcal D$ be a $\gamma$-sparse
family. Assume that $b\in L^1_{loc}(X)$. Then there exists a ${\gamma\over 2(\gamma+1)}$-sparse family
$\tilde{\mathcal S}\subset \mathcal D$ such that $\mathcal S\subset \tilde{\mathcal S}$ and for every cube $Q\in \tilde{\mathcal S}$,
\begin{align}\label{b-bQ eee0}
|b(x)-b_Q|\leq C\sum_{R\in \tilde{\mathcal S}, R\subset Q} \Omega(b,R)\chi_R(x)
\end{align}
for a.e. $x\in Q$.
\end{lem}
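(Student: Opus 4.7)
The plan is a Calder\'on--Zygmund stopping-time construction performed inside each $Q\in\mathcal S$, in the spirit of Lerner's proof of the analogous Euclidean lemma but adapted to the dyadic system of Section \ref{sec:dyadic_cubes} via the local doubling bound \eqref{Cmu0}. For each $Q\in\mathcal S$ I would build a family $\mathcal F(Q)\subset\mathcal D$ recursively: put $Q\in\mathcal F(Q)$, and given $R\in\mathcal F(Q)$, adjoin the pairwise disjoint collection of maximal dyadic subcubes $R'\subsetneq R$ satisfying
\[
   \frac{1}{\mu(R')}\int_{R'}|b-b_R|\,d\mu>2\,\Omega(b,R).
\]
A Chebyshev estimate immediately gives that the total measure of the stopping children of $R$ is at most $\tfrac12\mu(R)$, so iterating over generations yields $\sum_{R\in\mathcal F(Q)}\mu(R)\le 2\mu(Q)$. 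I would then set $\tilde{\mathcal S}:=\mathcal S\cup\bigcup_{Q\in\mathcal S}\mathcal F(Q)$.

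The pointwise estimate \eqref{b-bQ eee0} would be proved by induction on the depth of the stopping tree rooted at $R\in\tilde{\mathcal S}$. For a.e.\ $x\in R$ lying outside every stopping child of $R$, the maximality of the stopping children together with Lebesgue differentiation (valid on doubling spaces) forces $|b(x)-b_R|\le 2\,\Omega(b,R)$. For $x$ inside a stopping child $R_j$ of $R$, I would combine the inductive hypothesis on $R_j$ with the one-step bound
\[
   |b_{R_j}-b_R|\le\frac{\mu(R_j^*)}{\mu(R_j)}\cdot\frac{1}{\mu(R_j^*)}\int_{R_j^*}|b-b_R|\,d\mu\le 2C_{\mu,0}\,\Omega(b,R),
\]
where $R_j^*$ is the dyadic parent of $R_j$ (which fails the stopping condition by maximality of $R_j$) and the last inequality uses \eqref{Cmu0}. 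Since $R\in\tilde{\mathcal S}$, this extra constant is absorbed into the $R'=R$ summand of the target sum. The same argument applied to the sub-tree of $\mathcal F(Q)$ rooted at any $R\in\tilde{\mathcal S}$ then gives \eqref{b-bQ eee0} in general.

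For the sparseness of $\tilde{\mathcal S}$ I would pass to the equivalent Carleson packing condition via Theorem \ref{thm sparse}. The $\gamma$-sparseness of $\mathcal S$ furnishes $\sum_{R\in\mathcal S,\,R\subseteq R_0}\mu(R)\le\gamma^{-1}\mu(R_0)$ for every $R_0\in\mathcal D$, while the stopping decay $\sum_{R\in\mathcal F(Q)}\mu(R)\le 2\mu(Q)$ shows that the newly added cubes contribute at most one additional $\mu(Q)$ per generating $Q\in\mathcal S$ (with each new cube assigned to a unique minimal generating $\mathcal S$-cube so as not to be over-counted). Combining the two contributions yields a Carleson constant of $\tfrac{2(\gamma+1)}{\gamma}$ for $\tilde{\mathcal S}$, which Theorem \ref{thm sparse} converts into the claimed $\tfrac{\gamma}{2(\gamma+1)}$-sparseness, with the associated sets $E_R\subset R\setminus\bigcup_{R'\in\tilde{\mathcal S},\,R'\subsetneq R}R'$ having only finite overlap.

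The principal technical obstacle is the inductive bound on $|b_{R_j}-b_R|$: in $\mathbb R^n$ this is immediate from $|R_j^*|=2^n|R_j|$, whereas here the ratio $\mu(R_j^*)/\mu(R_j)$ is controlled only by the dyadic doubling constant $C_{\mu,0}$ of \eqref{Cmu0}, which then propagates into the absolute constant $C$ appearing in \eqref{b-bQ eee0}. A secondary subtlety is that cubes may appear in $\mathcal F(Q)$ for several $Q\in\mathcal S$; handling this by assigning each new cube to a unique generating $\mathcal S$-cube is what produces the precise constant $\tfrac{\gamma}{2(\gamma+1)}$ rather than a weaker sparseness.
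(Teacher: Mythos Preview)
Your proposal is correct and follows essentially the same approach as the paper: a Calder\'on--Zygmund stopping-time construction inside each $Q\in\mathcal S$, adapted from Lerner's argument via the doubling constant $C_{\mu,0}$ of \eqref{Cmu0}, followed by the Carleson-packing equivalence of Theorem~\ref{thm sparse} to recover the $\tfrac{\gamma}{2(\gamma+1)}$ sparseness. The only cosmetic difference is that the paper selects the stopping cubes by first forming the exceptional set $E=\{M_Q^d(b-b_Q)>\alpha\,\Omega(b,Q)\}$ and then running the Calder\'on--Zygmund decomposition on $\chi_E$, whereas you stop directly on the averages of $|b-b_R|$; the two formulations are equivalent and yield the same $\tfrac12$-sparse family $\mathcal F(Q)$ and the same constants up to a factor.
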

\begin{proof}
Fix a dyadic cube $Q\in\mathcal D$. We now show that there exists a family of
pairwise disjoint cubes $\{P_j\}\subset\mathcal D(Q)$ such that
$\sum_j\mu(P_j)\leq {1\over 2}\mu(Q)$ and for a.e. $x\in Q$,
\begin{align}\label{b-bQ eee1}
|b(x)-b_Q|\leq C\cdot C_{\mu,0}\Omega(b,Q)+ \sum_{j} |b(x)-b_{P_j}| \chi_{P_j}(x).
\end{align}
%

Let $M_Q^d$ be the standard dyadic local maximal operator restricted to $\mathcal D(Q)$ and $C_{M_Q^d}$ be the weak type $(1,1)$-norm of $M_Q^d$.
Then one can choose a constant $C$ depending on $C_{M_Q^d}$ such that the set
$E:=\{x\in Q: M_Q^d(b-b_Q)(x)> 4 C_{\mu,0}\cdot C\cdot \Omega(b,Q) \}$
satisfies  that $\mu(E)\leq {1\over 4 C_{\mu,0}}\mu(Q)$, where $C_{\mu,0}$ is the constant as in \eqref{Cmu0}.

If  $\mu(E)=0$, then \eqref{b-bQ eee1} holds trivially with the empty family
$\{P_j\}_j$. If $\mu(E)>0$ , then we now apply the Calder\'on--Zygmund decomposition
to the function $h(x):=\chi_E(x)$ on $Q$ at height $\lambda:={1\over 2 C_{\mu,0}}$ as follows: \ 
we begin by considering the descendants of $Q$ in $\mathcal D(Q)$ since
$$ \int_Q |h(x)|d\mu(x) < \lambda \mu(Q). $$
Let $\{Q_j^{(1)}\}\subset \mathcal D(Q)$ be the children of $Q$. If
\begin{align}\label{criteria1}
 \int_{Q_j^{(1)}} |h(x)|d\mu(x) >\lambda \mu(Q_j^{(1)})
\end{align}
then we select it as our candidate cube. If
$$ \int_{Q_j^{(1)}} |h(x)|d\mu(x) \leq \lambda \mu(Q_j^{(1)}) $$
then we keep looking at the children of $Q_j^{(1)}$ in $\mathcal D(Q)$ and then repeat
the above selection criteria and we will stop only when we find some descendant of $Q_j^{(1)}$ in $\mathcal D(Q)$
such that it meets the criteria \eqref{criteria1}.

Then it is direct to see that this produces pairwise disjoint
cubes $\{P_j\}\subset\mathcal D(Q)$ such that
$$ {1\over 2 C_{\mu,0}} \mu(P_j)< \mu(P_j\cap E) \leq {1\over 2}\mu(P_j) $$
and
$ \mu(E\backslash \cup_jP_j)=0 $. It follows that $\sum_j\mu(P_j)\leq {1\over 2}\mu(Q)$
and $P_j\cap E^c \not=\emptyset$.

Therefore, we get
\begin{align}\label{b-bQ eee2}
|b_{P_j}-b_Q|\leq {1\over \mu(P_j)}\int_{P_j} |b(x)-b_{Q}|d\mu(x) \leq 4 C_{\mu,0}\cdot C\cdot \Omega(b,Q)
\end{align}
and for a.e. $x\in Q$, $  |b(x)-b_Q|\chi_{Q\backslash \cup_jP_j}\leq  4 C_{\mu,0}\cdot C\Omega(b,Q)$.

Then, we have
\begin{align*}
|b(x)-b_Q|\chi_Q(x)&\leq  |b(x)-b_Q|\chi_{Q\backslash \cup_jP_j}(x) + \sum_{j} |b_{P_j}-b_{Q}| \chi_{P_j}(x)
+  \sum_{j} |b(x)-b_{P_j}| \chi_{P_j}(x)\\
&\leq 4 C_{\mu,0}\cdot C\Omega(b,Q) +  \sum_{j} |b(x)-b_{P_j}| \chi_{P_j}(x),
\end{align*}
which gives \eqref{b-bQ eee1}.

We observe that if $P_j\subset R$, where $R\in\mathcal D(Q)$, then $R\cap E^c\not=\emptyset$. Hence $P_j$
in \eqref{b-bQ eee2} can be replaced by $R$, namely, we have
$ |b_R-b_Q|\leq 4 C_{\mu,0}\cdot C\Omega(b,Q)$. Therefore, if $\cup_j P_j\subset \cup_i R_i$, where
$R_i\in\mathcal D(Q)$, and the cubes $\{R_i\}$ are pairwise disjoint, then we have
\begin{align}\label{b-bQ eee3}
|b(x)-b_Q|\leq 4 C_{\mu,0}\cdot C\Omega(b,Q)+ \sum_{i} |b(x)-b_{R_i}| \chi_{R_i}(x).
\end{align}

Iterating \eqref{b-bQ eee1}, from the selection of $\{P_j\}$ and from Definition \ref{D:Carleson}, we obtain that there exists a ${1\over 2}$-sparse family $\mathcal F(Q)\subset \mathcal D(Q)$
such that for a.e. $x\in Q$,
\begin{align*}
|b(x)-b_Q|\chi_Q(x)\leq 4 C_{\mu,0}\cdot C \sum_{P\in\mathcal F(Q)} \Omega(b,P)  \chi_{P}(x).
\end{align*}

Now for each $\mathcal F(Q)$, let $\tilde {\mathcal F}(Q)$ be the family that consists of all cubes
$\{P\}\subset\mathcal F(Q)$ that are not contained in any cube $R\in\mathcal S$ with $R\subsetneq Q$.
Then we define
$$\tilde {\mathcal S} := \bigcup_{Q\in\mathcal S} \tilde {\mathcal F}(Q). $$
It is clear, by construction, that the augmented family $\tilde{\mathcal S}$ contains the original family $\mathcal S$. Furthermore, if $\mathcal S$ and each $\mathcal F(Q)$ are sparse families, then the augmented family $\tilde{\mathcal S}$ is also sparse.

To be specific, we have that if $\mathcal S\subset \mathcal D$ is an $\gamma$-sparse family then the augmented family
$\tilde{\mathcal S}$ built upon ${1\over2}$-sparse family $\mathcal F(Q)$, $Q\in\mathcal S$, is an ${\gamma\over 2(\gamma+1)}$-sparse family.

We now show \eqref{b-bQ eee0}. Take an arbitrary cube $Q\in\mathcal S$. Let $P_j$ be the cubes appearing in
\eqref{b-bQ eee1}. Denote by $\mathcal M(Q)$ the family of the
maximal pairwise disjoint cubes from $\tilde{\mathcal S}$ which are strictly  contained in $Q$. Then by the augmentation
process, $\cup_jP_j \subset \cup_{P\in\mathcal M(Q)} P$. Therefore, by
\eqref{b-bQ eee3}, we have
\begin{align}\label{b-bQ eee4}
|b(x)-b_Q|\chi_Q(x)\leq 4 C_{\mu,0}\cdot C\Omega(b,Q)+ \sum_{P\in\mathcal M(Q)} |b(x)-b_{P}| \chi_{P}(x).
\end{align}
Now split $\tilde{\mathcal S}(Q) :=\{ P\in\mathcal S:\ P\subset Q\}$ into the layers $\tilde{\mathcal S}(Q)=\cup_{k=0}^\infty\mathcal M_k$, where $\mathcal M_0:=\{Q\}$, $\mathcal M_1 := \mathcal M(Q)$ and $\mathcal M_k$ is the family of the maximal elements
of $\mathcal M_{k-1}$. Iterating \eqref{b-bQ eee4} $k$ times, we get that
\begin{align}\label{b-bQ eee5}
|b(x)-b_Q|\chi_Q(x)\leq 4 C_{\mu,0}\cdot C\sum_{P\in\tilde{\mathcal S}(Q)}\Omega(b,P)\chi_P(x)+ \sum_{P\in\mathcal M_k} |b(x)-b_{P}| \chi_{P}(x).
\end{align}
Now we observe that since $\tilde{\mathcal S}$ is ${\gamma\over 2(\gamma+1)}$-sparse,
\begin{align*}
\sum_{P\in\mathcal M_k}\mu(P) \leq {1\over k+1}\sum_{i=0}^k \sum_{P\in\mathcal M_i}\mu(P)\leq
{1\over k+1}   \sum_{P\in\tilde{\mathcal S}(Q)}\mu(P)\leq {2(\gamma+1)\over\gamma(k+1)}\mu(Q).
\end{align*}
By letting $k\to\infty$ in \eqref{b-bQ eee5}, we obtain \eqref{b-bQ eee0}.
\end{proof}


Let $T$ be a Calder\'on--Zygmund operator as in Definition \ref{def 1}. We now have the maximal truncated operator
$T^*$ defined by
\begin{align*}
T^*f(x) := \sup_{\epsilon>0}\bigg| \int_{d(x,y)>\epsilon} K(x,y)f(y)d\mu(y) \bigg|.
\end{align*}
We recall the standard Hardy--Littlewood maximal function $\mathcal Mf(x)$ on $X$, defined as 
$$ \mathcal Mf(x):=\sup_{B \ni x} {1\over \mu(B)}\int_B |f(y)|\,d\mu(y), $$
where the supremum is taken over all balls $B\subset X$.
%
%
We now have the grand maximal truncated operator $\mathcal M_T$ defined by
\begin{align*}
\mathcal M_Tf(x) := \sup_{B\ni x} \esssup_{\xi\in B} \Big| T\big(f\chi_{X\backslash C_{\widetilde j_0}B}\big)(\xi) \Big|,
\end{align*}
where the supremum is taken over all balls $B\subset X$ containing $x$, $\widetilde j_0$ is the smallest integer such that 
\begin{align}\label{widetildej0}
 2^{\widetilde j_0}>\max\{3A_0, 2A_0\cdot C_{adj}\}
\end{align}
and
$C_{\widetilde j_0} := 2^{\widetilde j_0+2} A_0$, where $C_{adj}$ is an absolute constant as mentioned in Section 2.2.
Given a ball $B_0\subset X$, for $x\in B_0$ we define a local grand maximal truncated operator $\mathcal M_{T,B_0}$ as follows:
\begin{align*}
\mathcal M_{T,B_0}f(x) := \sup_{B\ni x,\,  B\subset B_0} \esssup_{\xi\in B} \Big| T\big(f\chi_{C_{\widetilde j_0}B_0\backslash C_{\widetilde j_0}B}\big)(\xi) \Big|.
\end{align*}

Then we first claim that the following lemma holds.
\begin{lem} \label{lem TM}
The following pointwise estimates hold:
\begin{itemize}
\item[{\rm (i)}] for a.e. $x\in B_0$,
$   |T(f\chi_{C_{\widetilde j_0}B_0})(x)|\leq C\|T\|_{L^1\to L^{1,\infty}} |f(x)| + \mathcal M_{T,B_0}f(x).   $

\item[{\rm(ii)}] for all $x\in X$,
$ \mathcal M_Tf(x) \leq C \mathcal Mf(x) + T^*f(x).   $

\end{itemize}
\end{lem}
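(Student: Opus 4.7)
The plan is to prove (i) by Lerner's trick (weak-type $(1,1)$ of $T$ plus Lebesgue differentiation) and (ii) by comparing the far integral at a nearby point $\xi \in B$ to a truncated integral at $x$, using the kernel smoothness on $X\setminus C_{\widetilde j_0}B$ and the kernel size bound on the thin annulus where the two integration regions disagree. The choice of $\widetilde j_0$ in \eqref{widetildej0} is designed precisely so that, for $x,\xi\in B$ and $y\notin C_{\widetilde j_0}B$, one has $d(x,\xi)\le (2A_0)^{-1}d(x,y)$, which is what makes the smoothness hypothesis \eqref{smooth of C-Z-S-I-O} applicable.

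For part (i), I would fix a Lebesgue point $x\in B_0$ of $|f|$ (and, simultaneously, a density point of the auxiliary ``good'' set defined below) and, for small $r>0$, set $B=B(x,r)\subset B_0$. Applying the weak-$(1,1)$ estimate to the local piece $f\chi_{C_{\widetilde j_0}B}$, the set
\[
E_r:=\bigl\{\xi\in B:\,|T(f\chi_{C_{\widetilde j_0}B})(\xi)|>2C\|T\|_{L^1\to L^{1,\infty}}\tfrac{1}{\mu(B)}\!\int_{C_{\widetilde j_0}B}|f|\,d\mu\bigr\}
\]
has $\mu(E_r)\le \mu(B)/2$. On the other hand, the definition of $\mathcal M_{T,B_0}$ gives, for a.e.\ $\xi\in B$, the bound $|T(f\chi_{C_{\widetilde j_0}B_0\setminus C_{\widetilde j_0}B})(\xi)|\le \mathcal M_{T,B_0}f(x)$. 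Adding the two contributions, on a subset $G_r\subset B$ with $\mu(G_r)\ge \mu(B)/2$ we obtain
\[
|T(f\chi_{C_{\widetilde j_0}B_0})(\xi)|\le 2C\|T\|_{L^1\to L^{1,\infty}}\tfrac{\mu(C_{\widetilde j_0}B)}{\mu(B)}\cdot\tfrac{1}{\mu(C_{\widetilde j_0}B)}\!\int_{C_{\widetilde j_0}B}|f|\,d\mu+\mathcal M_{T,B_0}f(x).
\]
Letting $r\to 0$, Lebesgue differentiation turns the first term into $C\|T\|_{L^1\to L^{1,\infty}}|f(x)|$ (with $C$ absorbing the doubling factor $\mu(C_{\widetilde j_0}B)/\mu(B)$). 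To transfer the estimate from a positive-density set of $\xi$'s to $x$ itself, I would invoke that a.e.\ $x$ is an approximate continuity point of the locally defined function $T(f\chi_{C_{\widetilde j_0}B_0})$: since $\mu(G_r)/\mu(B(x,r))\ge 1/2$ for every $r$, we can extract $\xi_n\to x$ along which $T(f\chi_{C_{\widetilde j_0}B_0})(\xi_n)\to T(f\chi_{C_{\widetilde j_0}B_0})(x)$, yielding (i).

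For part (ii), fix $x\in X$ and a ball $B=B(x_B,r_B)\ni x$, and set $\epsilon:=2A_0 r_B$. The condition $2^{\widetilde j_0}>3A_0$ ensures the inclusion $B(x,\epsilon)\subset C_{\widetilde j_0}B$, so that $X\setminus C_{\widetilde j_0}B\subset\{y:d(x,y)>\epsilon\}$. Write, for $\xi\in B$,
\[
T(f\chi_{X\setminus C_{\widetilde j_0}B})(\xi)=\bigl[T(f\chi_{X\setminus C_{\widetilde j_0}B})(\xi)-T(f\chi_{X\setminus C_{\widetilde j_0}B})(x)\bigr]+T(f\chi_{\{d(x,y)>\epsilon\}})(x)-T(f\chi_{C_{\widetilde j_0}B\setminus B(x,\epsilon)})(x).
\]
The middle term is bounded in absolute value by $T^*f(x)$. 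The last term lives on the annulus $\epsilon<d(x,y)\le C\cdot 2^{\widetilde j_0}r_B$ (a fixed, finite number of dyadic shells), so \eqref{size of C-Z-S-I-O} together with doubling gives a bound by $C\mathcal Mf(x)$. For the bracket, the choice of $\widetilde j_0$ ensures $d(x,\xi)\le (2A_0)^{-1}d(x,y)$ for every $y\notin C_{\widetilde j_0}B$, so \eqref{smooth of C-Z-S-I-O} applies; splitting the complement $X\setminus C_{\widetilde j_0}B$ into dyadic annuli about $x$, on the $k$-th annulus one gets $|K(x,y)-K(\xi,y)|\lesssim V(x,y)^{-1}\omega(c\,2^{-k})$, and summing yields a bound of the form $C\bigl(\sum_{k\ge 0}\omega(c\,2^{-k})\bigr)\mathcal Mf(x)$, which is $\lesssim \mathcal Mf(x)$ under the Dini condition on $\omega$. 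Taking the essential supremum in $\xi$ and the supremum in $B\ni x$ finishes (ii).

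The main obstacles are the geometric bookkeeping (confirming that $\widetilde j_0$ as in \eqref{widetildej0} is large enough for the smoothness condition to apply and for $B(x,\epsilon)$ to sit inside $C_{\widetilde j_0}B$, all while $A_0\ge 1$ is arbitrary and the quasi-triangle inequality degrades constants) and the transfer step in (i): the function $T(f\chi_{C_{\widetilde j_0}B_0})$ is only known to lie in $L^{1,\infty}_{\rm loc}$, so one cannot appeal to Lebesgue's theorem directly, and instead must use the positive-density argument on $G_r$ together with approximate continuity.
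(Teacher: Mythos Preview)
Your proposal is correct and is precisely the adaptation of Lerner's Euclidean argument \cite[Lemma 3.2]{Ler1} that the paper invokes without giving details; the paper's own proof is simply a pointer to \cite{Ler1} with the remark that it carries over to spaces of homogeneous type. Your sketch is therefore more complete than what the paper provides, and the two potential obstacles you flag (the approximate-continuity transfer in (i), and the geometric check that $\widetilde j_0$ is large enough for \eqref{smooth of C-Z-S-I-O} to apply in (ii), which ultimately uses $2^{\widetilde j_0}>2A_0C_{adj}$ with $C_{adj}\gtrsim A_0^3$) are handled exactly as one would expect.
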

\begin{proof}
The result in the Euclidean setting is from  \cite[Lemma 3.2]{Ler1}.
Here we can adapt the proof in \cite{Ler1} to our setting of spaces of homogeneous type.
\end{proof}

Next we have the sparse domination for the higher order commutator.
\begin{thm}\label{thm sparse2}
Let $T$ be the Calder\'on--Zygmund operator as in Definition \ref{def 1} and let $b\in L^1_{loc}(X)$. For every  $f\in L^\infty(X)$ with bounded support,
there exist $\mathpzc T$ dyadic systems $\mathcal D^t, t=1,2,\ldots,\mathpzc T$ and $\eta$-sparse families $\mathcal S_t \subset \mathcal D^t$ such that
for a.e. $x\in X$,
\begin{align}\label{sparse domination high}
|T_b^m(f)(x)|\leq C \sum_{t=1}^{\mathpzc T}\sum_{k=0}^m C_m^k \sum_{Q\in \mathcal S_t} |b(x)-b_Q|^{m-k}
\bigg( {1\over \mu(Q)}\int_Q |b(z)-b_Q|^k |f(z)|dz \bigg)\chi_Q(x),
\end{align}
where $C_m^k:= {  m! \over (m-k)! \cdot k! }$.
\end{thm}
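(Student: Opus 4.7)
The plan is to adapt the Lerner--Ombrosi--Rivera-R\'ios sparse domination argument for iterated commutators on $\R^n$ to the space of homogeneous type $(X,d,\mu)$, using the adjacent dyadic systems from Theorem \ref{thm:existence2} together with Lemma \ref{lem TM} and the binomial expansion of $T_b^m$. The starting point is the algebraic identity, valid for any constant $c\in\cc$,
\begin{align*}
T_b^m(f)(x) = \sum_{k=0}^m (-1)^k \binom{m}{k} (b(x)-c)^{m-k}\, T\big((b-c)^k f\big)(x),
\end{align*}
which follows by induction on $m$ from the definition $T_b^mf=[b,T_b^{m-1}]f$.

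First, I would localize: cover $\supp f$ by finitely many balls and, for each fixed ball $B_0$, apply the above identity with $c=b_{B_0}$ to $f\chi_{C_{\widetilde j_0}B_0}$. Lemma \ref{lem TM}(i) then controls each piece $T((b-b_{B_0})^k f\chi_{C_{\widetilde j_0}B_0})(x)$ on $B_0$ by
\begin{align*}
C\|T\|_{L^1\to L^{1,\infty}}|b(x)-b_{B_0}|^k|f(x)| + \mathcal M_{T,B_0}\big((b-b_{B_0})^k f\big)(x).
\end{align*}
Define the exceptional set
\begin{align*}
E_{B_0} := \bigcup_{k=0}^m \Big\{x\in B_0 : \mathcal M_{T,B_0}\big((b-b_{B_0})^k f\big)(x) + \mathcal M\big((b-b_{B_0})^k f\big)(x) > C\Lambda_k\Big\},
\end{align*}
where $\Lambda_k := \frac{1}{\mu(B_0)}\int_{C_{\widetilde j_0}B_0}|b-b_{B_0}|^k|f|\,d\mu$ and $C$ is chosen via the weak-type $(1,1)$ norms of $\mathcal M$ and $\mathcal M_{T,B_0}$ (the latter absorbed via Lemma \ref{lem TM}(ii) and the $L^1\to L^{1,\infty}$ bound of $T^*$) so that $\mu(E_{B_0})\le\frac12\mu(B_0)$.

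Next, I would perform a Calder\'on--Zygmund-type stopping time on $\chi_{E_{B_0}}$ inside a dyadic cube $Q_{B_0}$ drawn from one of the adjacent systems $\mathscr D^t$ with $B_0\subset Q_{B_0}\subset C_{adj}B_0$ (Theorem \ref{thm:existence2}), producing pairwise disjoint subcubes $\{P_j\}\subset \mathscr D^t$ with $\sum_j\mu(P_j)\le\frac12\mu(Q_{B_0})$ and $\mu(E_{B_0}\setminus\bigcup_j P_j)=0$. On $Q_{B_0}\setminus\bigcup_jP_j$ the binomial expansion combined with the bound on $\Lambda_k$ yields exactly the $k$-th sparse summand in \eqref{sparse domination high} with $Q=Q_{B_0}$, whereas on each $P_j$ we hand the estimate back to the recursion restarted with $P_j$ in place of $Q_{B_0}$. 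Iterating and taking the union over all recursion levels produces a collection $\mathcal S_t$ which is $\frac12$-Carleson within $\mathscr D^t$ by construction, hence $\frac12$-sparse by Theorem \ref{thm sparse}; sorting the initial cover of $\supp f$ by which adjacent system $\mathscr D^t$ hosts the companion cube produces the $\mathpzc T$ families in the statement.

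The main obstacle, compared to the Euclidean proof, is the recursive step: the stopping-time must produce cubes inside the \emph{same} dyadic system $\mathscr D^t$ so that $\mathcal S_t\subset\mathscr D^t$, and the passage ball $\to$ companion cube must be coordinated so that the measure loss $\mu(P_j)\lesssim\mu(Q_{B_0})$ does not degrade across iterations; here the doubling condition \eqref{upper dimension} and the containment \eqref{eq:ball;included} absorb the discrepancy into universal constants in $C$ and $\eta$. A secondary nuisance is that $d$ is only a quasi-metric, so the dilations $C_{\widetilde j_0}B$ appearing in $\mathcal M_{T,B_0}$ must be enlarged enough (via the choice \eqref{widetildej0}) to absorb the constant $A_0$ in \eqref{eqn:quasitriangleineq} whenever we pass between a ball and a cube in an adjacent system; once this is in place, the verification that the exceptional set has measure at most $\tfrac12\mu(B_0)$ with constants independent of $m$ proceeds exactly as in the Euclidean case, since each summand $\Lambda_k$ is handled by a single weak-type $(1,1)$ estimate.
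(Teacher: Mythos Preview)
Your approach is essentially the same as the paper's---binomial expansion of $T_b^m$, control of each $T((b-c)^kf\chi_{C_{\widetilde j_0}B_0})$ via Lemma \ref{lem TM}(i), weak-type stopping on the exceptional set, and recursion---but there is one genuine gap: you only produce the sparse domination for $x$ in a neighbourhood of $\supp f$. You say you will ``cover $\supp f$ by finitely many balls'' and run the recursion inside each; the resulting estimate then holds for a.e.\ $x$ in the union of those balls. But $T_b^m f(x)$ is in general nonzero for $x$ arbitrarily far from $\supp f$, and the statement \eqref{sparse domination high} is claimed for a.e.\ $x\in X$.

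The paper resolves this by decomposing all of $X$ into annuli $U_j=2^{j+1}B_0\setminus 2^jB_0$ around a single ball $B_0\supset\supp f$ and covering each $U_j$ by balls $\widetilde B_{j,\ell}$ of radius $2^{j-\widetilde j_0}r$ (with the number $L_j$ of such balls uniformly bounded by geometric doubling). The choice of $\widetilde j_0$ in \eqref{widetildej0} guarantees that $C_{\widetilde j_0}\widetilde B_{j,\ell}\supset B_0\supset\supp f$, so on each $\widetilde B_{j,\ell}$ one has $f\chi_{C_{\widetilde j_0}B(\tilde Q_{j,\ell})}=f$ and the local recursion gives an estimate for $T_b^m f$ itself. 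Claims \eqref{claim1}--\eqref{claim2} then show the enlarged balls $C_{adj}\widetilde B_{j,\ell}$ overlap at most $C_{A_0,\mu,\widetilde j_0}(2j_0+1)$ times, which is what allows the union of all the local sparse families (one for each $\widetilde B_{j,\ell}$, over all $j\ge 0$) to remain sparse with a fixed $\eta$. Without this step your sparse families do not cover the points where the estimate is needed, and simply enlarging your initial cover to all of $X$ with balls of fixed radius would destroy the bounded-overlap property. A secondary point: the paper's final families $\mathcal S_t$ are obtained by replacing each cube $Q$ in the recursion by the larger cube $R_Q\in\mathscr D^t$ containing $C_{\widetilde j_0}B(Q)$, so the sorting into adjacent systems happens cube-by-cube rather than only at the top level as you describe.
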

\begin{proof}
We follow the idea as in \cite{LOR2,IR} for the domination, and adapt it to our setting of space of homogeneous type.

{
Suppose $f$ is supported in a ball $B_0:= B(x_0,r)\subset X$. 
 We now consider a decomposition of $X$ with respect to this ball $B_0$.
We define the annuli $U_j:= 2^{j+1}B_0\backslash 2^j B_0$, $j\geq0$ and we choose $j_0$ to be the smallest integer such that
\begin{align}\label{j0}
j_0>\widetilde j_0\quad{\rm and}\quad 2^{j_0}> 4A_0.
\end{align}
Next, for each $U_j$, we choose the balls 
\begin{align}\label{Bjl}
\{ \widetilde B_{j,\ell} \}_{\ell=1}^{L_j}
\end{align}
 centred in $U_j$ and with radius $2^{j-\widetilde j_0}r$ to cover $U_j$. From the geometric doubling property \cite[p. 67]{CW1}, it is direct to see that 
\begin{align}\label{CA0mu}
 \sup _j L_j \leq C_{A_0,\mu,\widetilde j_0},
\end{align}  
 where $C_{A_0,\mu,\widetilde j_0}$ is an absolute constant depending only on $A_0$, $\widetilde j_0$ and $C_\mu$.

We now first study the properties of these $\widetilde B_{j,\ell}$. Denote $\widetilde B_{j,\ell}:= B(x_{j,\ell}, 2^{j-\widetilde j_0}r)$, where $\widetilde j_0$ is defined as in \eqref{widetildej0}. Then we have $C_{adj}\widetilde B_{j,\ell}:= B(x_{j,\ell}, C_{adj}2^{j-\widetilde j_0}r)$, where $C_{adj}$ is an absolute constant as mentioned in Section 2.2.
We claim that 
\begin{align}\label{claim1}
C_{adj}\widetilde B_{j,\ell}\cap U_{j+j_0}=\emptyset, \quad \forall j\geq0 \quad{\rm and}\quad \forall \ell=1,2,\ldots, L_j;
\end{align}
and that
\begin{align}\label{claim2}
C_{adj}\widetilde B_{j,\ell}\cap U_{j-j_0}=\emptyset, \quad \forall j\geq j_0 \quad{\rm and}\quad \forall \ell=1,2,\ldots, L_j.
\end{align}

%

Assume  \eqref{claim1} and \eqref{claim2} at the moment.
Now combining the properties as in \eqref{claim1} and \eqref{claim2}, we see that each $C_{adj}\widetilde B_{j,\ell}$ only intersects with at most $2j_0+1$ annuli $U_j$s. Moreover, 
for every $j$ and $\ell$, 
$C_{\widetilde j_0}\widetilde B_{j,\ell} $ covers $B_0$.  


}


Now for the given ball $B_0$ as above, we point out that from \eqref{eq:ball;included} we have that there exist an integer $t_0 \in \{1, 2, \ldots, \mathpzc T\}$ and $Q_0\in\mathscr{D}^{t_0}$ such that
$B_0\subseteq Q_0\subseteq C_{adj} B_0$.
Moreover, for this $Q_0$, as in \eqref{eq:contain} we use $B(Q_0)$ to denote 
the ball that contains $Q_0$ and has measure comparable to $Q_0$. Then it is easy to see that $B(Q_0)$ covers $B_0$ and
$\mu(B(Q_0))\ls \mu(B_0)$, where the implicit constant depends only on $C_{adj}$,  $C_\mu$ and $A_1$ as in \eqref{eq:contain}.

We show that there exists a {${1\over 2}$}-sparse family $\mathcal F^{t_0}\subset \mathcal D^{t_0}(Q_0)$, { the set of all dyadic cubes in $t_0$-th dyadic system that are contained in $Q_0$}, such that for a.e.
$x\in B_0$,
\begin{align}\label{ee a high}
|T_b^m(f\chi_{C_{\widetilde j_0}B(Q_0)})(x)|\leq C\sum_{k=0}^m C_m^k \sum_{Q\in \mathcal F^{t_0}}   \Big(|b(x)-b_{R_Q}|^{m-k} \Big| |f|\, |b-b_{R_Q}|^k\Big|_{C_{\widetilde j_0}B(Q)} \Big)\chi_Q(x).
\end{align}
Here, $R_Q$ is the dyadic cube in $\mathscr{D}^t$ for some $t \in \{1, 2, \ldots, \mathpzc T\}$ such that
$C_{\widetilde j_0} B(Q)\subset R_Q \subset C_{adj}\cdot C_{\widetilde j_0} B(Q)$, where $B(Q)$ is defined as in \eqref{eq:contain}, $j_0$ defined as in \eqref{j0} and $\widetilde j_0$  defined as in \eqref{widetildej0}. 

To prove the claim it suffices to prove the following recursive estimate:
there exist pairwise disjoint cubes $P_j \in \mathscr{D}^{t_0}(Q_0)$ such that $\sum_j\mu(P_j) \leq {1\over2}\mu(Q_0)$ and
\begin{align}\label{ee a claim high}
|T_b^m(f\chi_{C_{\widetilde j_0}B(Q_0)})(x)|\chi_{Q_0}(x)&\leq C\sum_{k=0}^m C_m^k     \Big(|b(x)-b_{R_{Q_0}}|^{m-k} \Big| |f|\, |b-b_{R_{Q_0}}|^k\Big|_{C_{\widetilde j_0}B_0} \Big)\chi_{Q_0}(x)\\
&\quad + \sum_j \big|  T_b^m(  f\chi_{C_{\widetilde j_0}B(P_j)}  ) (x)\big|\chi_{P_j}(x)\nonumber
\end{align}
for a.e. $x\in B_0$.

Iterating this estimate we obtain \eqref{ee a high} with $\mathcal F^{t_0}$ being the union of all the families $\{P_j^k\}$
where $\{P_j^0\} = \{Q_0\}$, $\{P_j^1\} = \{Q_j\}$ as mentioned above, and $\{P_j^k\}$ are the cubes obtained at the $k$-th stage of the iterative
process. It is also clear that $\mathcal F^{t_0}$ is a $1/2$-sparse family. 

Let us prove then the recursive estimate. We observe that for any arbitrary family of disjoint cubes $\{P_j\}\subset \mathcal D^{t_0}(Q_0)$, we have that
\begin{align*}
&|T_b^m(f\chi_{C_{\widetilde j_0}B(Q_0)})(x)|\chi_{Q_0}(x)\\
&\leq |T_b^m(f\chi_{C_{\widetilde j_0}B(Q_0)})(x)|\chi_{Q_0\backslash \cup_j P_j}(x) +\sum_j |T_b^m(f\chi_{C_{\widetilde j_0}B(Q_0)})(x)|\chi_{P_j}(x)\\
&\leq |T_b^m(f\chi_{C_{\widetilde j_0}B(Q_0)})(x)|\chi_{Q_0\backslash \cup_j P_j}(x) +\sum_j |T_b^m(f\chi_{C_{\widetilde j_0}B(Q_0) \backslash C_{\widetilde j_0}B(P_j) })(x)|\chi_{P_j}(x) \\
&\quad\quad+\sum_j |T_b^m(f\chi_{ C_{\widetilde j_0}B(P_j) })(x)|\chi_{P_j}(x).
\end{align*}
So it suffices to show that we can choose a family of pairwise disjoint cubes $\{P_j\}\subset\mathcal D^{t_0}(Q_0)$ with
$\sum_j\mu(P_j)\leq {1\over 2}\mu(Q_0)$ and such that for a.e. $x\in B_0$,
\begin{align*}
 &|T_b^m(f\chi_{C_{\widetilde j_0}B(Q_0)})(x)|\chi_{Q_0\backslash \cup_j P_j}(x) +\sum_j |T_b^m(f\chi_{C_{\widetilde j_0}B(Q_0) \backslash C_{\widetilde j_0}B(P_j) })(x)|\chi_{P_j}(x) \\
&\leq C\sum_{k=0}^m C_m^k |b(x)-b_{R_{Q_0}}|^{m-k} \Big| |f|\, |b-b_{R_{Q_0}}|^k\Big|_{C_{\widetilde j_0}B(Q_0)}.
\end{align*}

To see this, use the fact that 
$$ T_b^mf= T_{b-b_{R_{Q_0}}}^mf = \sum_{k=0}^m (-1)^k C_m^k T\big( (b-b_{R_{Q_0}})^k f\big) (b-b_{R_{Q_0}})^{m-k},   $$
we obtain that
\begin{align*}
 &|T_b^m(f\chi_{C_{\widetilde j_0}B(Q_0)})(x)|\chi_{Q_0\backslash \cup_j P_j}(x) +\sum_j |T_b^m(f\chi_{C_{\widetilde j_0}B(Q_0) \backslash C_{\widetilde j_0}B(P_j) })(x)|\chi_{P_j}(x) \\
& \leq \sum_{k=0}^m  C_m^k |T\big( (b-b_{R_{Q_0}})^k f\chi_{C_{\widetilde j_0}B(Q_0)}\big)(x)| |b(x)-b_{R_{Q_0}}|^{m-k}\chi_{Q_0\backslash \cup_j P_j}(x)\\
&\quad\quad+  \sum_{k=0}^m  C_m^k |T\big( (b-b_{R_{Q_0}})^k f\chi_{C_{\widetilde j_0}B(Q_0) \backslash C_{\widetilde j_0}B(P_j) }\big)(x)| |b(x)-b_{R_{Q_0}}|^{m-k}\chi_{P_j}(x)\\
&=: I_1+I_2.
\end{align*}

Now for $k=0,1,\ldots,m$, we define the set $E_k$ as
\begin{align*}
E_k &:= \Big\{ x\in B_0: |b(x)-b_{R_{Q_0}}|^k|f(x)| > \alpha \Big|  |b-b_{R_{Q_0}}|^k \,|f|\Big|_{C_{\widetilde j_0}B(Q_0)}\Big\}\\
&\quad \bigcup  \Big\{ x\in B_0: \mathcal M_{T,B_0}\big((b-b_{R_{Q_0}})^kf\big)(x) > \alpha C_T \Big|  |b-b_{R_{Q_0}}|^k\,|f|\Big|_{C_{\widetilde j_0}B(Q_0)}\Big\}
\end{align*}
and  $E:=\cup_{k=0}^m E_k$. Then, choosing $\alpha$ big enough (depending on $C_{\widetilde j_0}$, $C_{adj}$,  $C_\mu$ and $A_1$ as in \eqref{eq:contain}), we have that
$$ \mu(E)\leq {1\over 4 C_{\mu,0}} \mu(B_0), $$
 where $C_{\mu,0}$ is the constant in \eqref{Cmu0}.
We now apply the Calder\'on--Zygmund decomposition to the function $\chi_E$ on $B_0$ at the height
$\lambda:= {1\over 2 C_{\mu,0}}$, to obtain pairwise disjoint cubes $\{P_j\}\subset \mathcal D^{t_0}(Q_0)$ such that
$$ {1\over 2 C_{\mu,0}}\mu(P_j)\leq \mu(P_j\cap E)\leq {1\over2}\mu(P_j) $$
and
$ \mu(E\backslash \cup_j P_j)=0$. It follows that
$$ \sum_j\mu(P_j)\leq {1\over2}\mu(B_0)\quad{\rm and}\quad P_j\cap E^c\not=\emptyset.$$

Then we have
$$
\esssup_{\xi\in P_j}\bigg|T\Big(|b-b_{R_{Q_0}}|^k|f|\chi_{C_{\widetilde j_0}B(Q_0)\backslash C_{\widetilde j_0} B(P_j)}\Big)(\xi)\bigg|\leq C   \Big| |f|\, |b-b_{R_{Q_0}}|^k\Big|_{C_{\widetilde j_0}B(Q_0)},
$$
which allows us to control the summation in the term $I_2$ above.

Now from (i) in Lemma \ref{lem TM}, we obtain that for a.e. $x\in B_0$,
\begin{align*}
\Big|T\big( (b-b_{R_{Q_0}})^k f\chi_{C_{\widetilde j_0}B(Q_0)}\big)(x)\Big|\leq C |b(x)-b_{R_{Q_0}}|^k|f(x)| +  \mathcal M_{T,B_0}\big( (b-b_{R_{Q_0}})^k f\chi_{C_{\widetilde j_0}B(Q_0)}\big)(x).
\end{align*}
Since $\mu(E\backslash \cup_j P_j)=0$, we have that from the definition of the set $E$, the following estimate
$$   |b(x)-b_{R_{Q_0}}|^k|f(x)| \leq \alpha \Big| |f|\, |b-b_{R_{Q_0}}|^k\Big|_{C_{\widetilde j_0}B(Q_0)}   $$
  holds for a.e. $x\in B_0\backslash \cup_j P_j$, and also
$$ \mathcal M_{T,B_0}\big( (b-b_{R_{Q_0}})^k f\chi_{C_{\widetilde j_0}B_0}\big)(x) \leq \alpha C_T \Big| |f|\, |b-b_{R_{Q_0}}|^k\Big|_{C_{\widetilde j_0}B(Q_0)}  $$
  holds for a.e. $x\in B_0\backslash \cup_j P_j$.  These estimates allow us to control the summation in the term $I_1$ above.
Thus, we obtain that \eqref{ee a claim high} holds, which yields that  \eqref{ee a high} holds.


We now consider the partition of the space as follows. Suppose $f$ is supported in a ball $B_0\subset X$.  We have
$$ X = \bigcup_{j=0}^{\infty} 2^jB_0. $$

We now 
consider the annuli $U_j:= 2^{j+1}B_0\backslash 2^{j}B_0$ for $j\geq0$ and the covering $\{\widetilde B_{j,\ell}\}_{\ell=1}^{L_j}$ of $U_j$
as in \eqref{Bjl}.
We note that for each $\tilde B_{j,\ell}$, there exist $t_{j,\ell} \in \{1, 2, \ldots, \mathpzc T\}$ and $\tilde Q_{j,\ell}\in\mathscr{D}^{t_{j,\ell}}$ such that
$    \tilde B_{j,\ell}\subseteq \tilde Q_{j,\ell}\subseteq C_{adj}\tilde B_{j,\ell}$.
Moreover, we note that
for each such $\tilde B_{j,\ell}$, the enlargement $C_{\widetilde j_0} B(\tilde Q_{j,\ell})$ covers $B_0$ since $C_{\widetilde j_0}\widetilde B_{j,\ell} $ covers $B_0$.

We now apply \eqref{ee a high} to each $\widetilde B_{j,\ell}$, then
we obtain a ${1\over 2}$-sparse  family $\mathcal{\tilde F}_{j,\ell} \subset \mathscr{D}^{t_{j,\ell}}(\tilde Q_{j,\ell})$ such that
\eqref{ee a high} holds for a.e. $x\in \widetilde B_{j,\ell}$.  

Now we set $\mathcal F := \cup_{j,\ell} \mathcal{\tilde F}_{j,\ell}$.
Note that the balls $C_{adj}\widetilde B_{j,\ell}$ are overlapping at most $C_{A_0,\mu,\widetilde j_0}(2j_0+1)$ times, where $C_{A_0,\mu,\widetilde j_0}$ is the constant in \eqref{CA0mu}.   Then we obtain that
$\mathcal F $ is a ${1\over 2C_{A_0,\mu,\widetilde j_0}(2j_0+1)} $-sparse family and for a.e. $x\in X$,
\begin{align*}
|T_b^m(f)(x)|\leq C\sum_{k=0}^m C_m^k \sum_{Q\in \mathcal F}   \Big(|b(x)-b_{R_Q}|^{m-k} \Big| |f|\, |b-b_{R_Q}|^k\Big|_{C_{\widetilde j_0}B(Q)} \Big)\chi_Q(x).
\end{align*}

Since $C_{\widetilde j_0}B(Q)\subset R_Q$, and it is clear that $\mu(R_Q)\leq \overline C\mu(C_{\widetilde j_0}B(Q))$ ($\overline C$ depends only on $C_\mu$ and $C_{adj}$), we obtain that
$|f|_{C_{\widetilde j_0}B(Q)} \leq \overline C |f|_{R_Q} $. Next, we further set
$ \mathcal S_{t}:=\{R_Q\in \mathcal D^t:\ Q\in\mathcal F\}, \ \ t\in\{1,2,\ldots,\mathpzc T\}, $
 and from the fact that $\mathcal F$ is ${1\over 2C_{A_0,\mu,\widetilde j_0}(2j_0+1)}$-sparse, we can obtain that
 each family $\mathcal S_{t}$ is ${1\over 2C_{A_0,\mu,\widetilde j_0}(2j_0+1)\overline c}$-sparse.
 Now we let
 $$\eta:={1\over 2C_{A_0,\mu,\widetilde j_0}(2j_0+1)\overline c}, $$
 where $\overline c$ is a constant depending only on $\overline C$, $C_{\widetilde j_0}$ and the doubling constant $C_\mu$.
 Then it follows that
\eqref{sparse domination high}
holds, which finishes the proof.

In the end, we show \eqref{claim1} and \eqref{claim2}.

We first show \eqref{claim1} by contradiction. Suppose there exists some $\widetilde B_{j,\ell}= B(x_{j,\ell}, 2^{j-\widetilde j_0}r)$ such that $C_{adj}\widetilde B_{j,\ell}\cap U_{j+j_0}\not=\emptyset$. Then there exists at least one $y_0\in C_{adj}\widetilde B_{j,\ell}\cap U_{j+j_0}$. Then from the definition of $U_{j+j_0}$ we see that
$$ d(x_0, y_0)\geq 2^{j+j_0}r. $$
Moreover, from the definition of $x_{j,\ell}$ and the quasi triangular inequality \eqref{eqn:quasitriangleineq} we get that
\begin{align*}
d(x_0, y_0)\leq A_0\big(  d(x_0,x_{j,\ell}) + d(x_{j,\ell},y_0) \big)< A_0\big( 2^{j+1}r + C_{adj}2^{j-\widetilde j_0}r \big),
\end{align*}
which, together with the previous inequality, show that
$  2^{j+j_0}r \leq A_0\big( 2^{j+1}r + C_{adj}2^{j-\widetilde j_0}r \big) $, and hence we have
$$ 2^{j_0} \leq A_0\big( 2 + C_{adj}2^{-\widetilde j_0} \big) < 3A_0,$$
which contradicts to \eqref{j0}. Hence, we see that  \eqref{claim1} holds.

We now show \eqref{claim2}, and again we will prove it by contradiction. Suppose there exists some $\widetilde B_{j,\ell}= B(x_{j,\ell}, 2^{j-\widetilde j_0}r)$ such that $C_{adj}\widetilde B_{j,\ell}\cap U_{j-j_0}\not=\emptyset$, where $j\geq j_0$.
Then there exists at least one $y_0\in C_{adj}\widetilde B_{j,\ell}\cap U_{j-j_0}$. From the definition of $x_{j,\ell}$ and the quasi triangular inequality \eqref{eqn:quasitriangleineq},  we see that
\begin{align*}
2^j r \leq d(x_0, x_{j,\ell})\leq A_0\big(  d(x_0,y_{0}) + d(y_0,x_{j,\ell}) \big)< A_0\big( 2^{j- j_0+1}r +C_{adj}2^{j-\widetilde j_0}r   \big),
\end{align*}
which implies that
$$ 1\leq A_0(2^{- j_0+1}+ C_{adj}2^{-\widetilde j_0}  ), $$
which contradicts to \eqref{j0} and \eqref{widetildej0}. Hence, we see that  \eqref{claim2} holds.
\end{proof}

\section{Upper Bound of the Commutator $T_b^m $: Proof of Theorem \ref{thm1}}\label{s5}

In this section we provide the proof of Theorem \ref{thm1} following the idea in \cite{LOR2}.

Let $\mathcal D$ be a dyadic system in $(X,d,\mu)$ and let
$\mathcal S$ be a sparse family from $\mathcal D$.
We now define
$$ A_b^{m,k}f(x) := \sum_{Q\in\mathcal S} |b(x)-b_Q|^{m-k} \bigg( {1\over \mu(Q)}\int_Q |b(z)-b_Q|^k |f(z)|d\mu(z) \bigg)\chi_Q(x).$$
By duality, we have that
\begin{align}\label{A Lpnorm}
&\|A_b^{m,k}f\|_{L^p_\lambda(X)} \\
&\leq \sup_{g: \|g\|_{L^{p'}_\lambda(X)}=1}\sum_{Q\in\mathcal S}  \bigg(\int_Q |g(x)\lambda(x)||b(x)-b_Q|^{m-k}d\mu(x)\bigg) \bigg( {1\over \mu(Q)}\int_Q |b(z)-b_Q|^k |f(z)|d\mu(z) \bigg).\nonumber
\end{align}

Now by Lemma \ref{lem b-bQ}, there exists a sparse family
$\tilde{\mathcal S}\subset \mathcal D$ such that $\mathcal S\subset \tilde{\mathcal S}$ and for every cube $Q\in \tilde{\mathcal S}$, for a.e. $x\in Q$,
\begin{align*}
|b(x)-b_Q|\leq C\sum_{P\in \tilde{\mathcal S}, P\subset Q} \Omega(b,P)\chi_P(x).
\end{align*}

Since $b$ is in ${\rm BMO}_{\nu^{1\over m}}(X)$, then we have for a.e. $x\in Q$
\begin{align*}
|b(x)-b_Q|\leq C\|b\|_{{\rm BMO}_{\nu^{1\over m}}(X)} \sum_{P\in \tilde{\mathcal S}, P\subset Q}  {\nu^{1\over m}(P)\over \mu(P)} \chi_P(x).
\end{align*}
Then combining this estimate and inequality \eqref{A Lpnorm}, we further have
\begin{align*}
&\|A_b^{m,k}f\|_{L^p_\lambda(X)} \\
&\leq C \|b\|_{{\rm BMO}_{\nu^{1\over m}}(X)}^m \sup_{g: \|g\|_{L^{p'}_{\lambda}(X)}=1}\sum_{Q\in\mathcal S}  \Bigg({1\over \mu(Q)}\int_Q |g(x)\lambda(x)|   \bigg(\sum_{P\in \tilde{\mathcal S}, P\subset Q}  {\nu^{1\over m}(P)\over \mu(P)} \chi_P(x)\bigg)^{m-k}   d\mu(x)\Bigg) \\
&\quad\quad\times\bigg( {1\over \mu(Q)}\int_Q \bigg(\sum_{P\in \tilde{\mathcal S}, P\subset Q}  {\nu^{1\over m}(P)\over \mu(P)} \chi_P(z)\bigg)^k |f(z)|d\mu(z) \bigg)\, \mu(Q).
\end{align*}

Next, note that for each $\ell\in\mathbb N$,
we have
\begin{align*}
   \bigg(\sum_{P\in \tilde{\mathcal S}, P\subset Q}  {\nu^{1\over m}(P)\over \mu(P)} \chi_P(x)\bigg)^{\ell}
   &= \sum_{P_1,P_2,\ldots,P_\ell\in \tilde{\mathcal S}, P_1,P_2,\ldots,P_\ell\subset Q}  {\nu^{1\over m}(P_1)\over \mu(P_1)} \cdots {\nu^{1\over m}(P_\ell)\over \mu(P_\ell)} \chi_{\{P_1\cap\cdots\cap P_\ell\}}(x)\\
   &\leq \ell!   \sum_{P_1,\ldots,P_\ell\in \tilde{\mathcal S}, P_\ell\subset P_{\ell-1}\cdots \subset P_1\subset Q}  {\nu^{1\over m}(P_1)\over \mu(P_1)} \cdots {\nu^{1\over m}(P_\ell)\over \mu(P_\ell)} \chi_{ P_\ell}(x).
\end{align*}
Therefore, for an arbitrary function $h$, we have
\begin{align*}
   &\int_Q |h(x)| \bigg(\sum_{P\in \tilde{\mathcal S}, P\subset Q}  {\nu^{1\over m}(P)\over \mu(P)} \chi_P(x)\bigg)^{\ell} d\mu(x)  \\
   &\leq \ell!   \sum_{P_1,\ldots,P_\ell\in \tilde{\mathcal S}, P_\ell\subset P_{\ell-1}\cdots \subset P_1\subset Q}  {\nu^{1\over m}(P_1)\over \mu(P_1)} \cdots {\nu^{1\over m}(P_\ell)\over \mu(P_\ell)} |h|_{ P_\ell}\mu(P_\ell)\\
  &\leq C   \sum_{P_1,\ldots,P_{\ell-1}\in \tilde{\mathcal S}, P_{\ell-1}\subset P_{\ell-2}\cdots \subset P_1\subset Q}  {\nu^{1\over m}(P_1)\over \mu(P_1)} \cdots {\nu^{1\over m}(P_{\ell-1})\over \mu(P_{\ell-1})}    \sum_{ P_\ell\subset P_{\ell-1}, P_\ell\in\tilde{ \mathcal S} } |h|_{ P_\ell}\nu^{1\over m}(P_\ell) \\
  &\leq C   \sum_{P_1,\ldots,P_{\ell-1}\in \tilde{\mathcal S}, P_{\ell-1}\subset P_{\ell-2}\cdots \subset P_1\subset Q}  {\nu^{1\over m}(P_1)\over \mu(P_1)} \cdots {\nu^{1\over m}(P_{\ell-1})\over \mu(P_{\ell-1})}    \int_{P_{\ell-1}} A_{\tilde{\mathcal S}}( |h|)(x)\nu^{1\over m}(x) d\mu(x) \\
  &= C   \sum_{P_1,\ldots,P_{\ell-1}\in \tilde{\mathcal S}, P_{\ell-1}\subset P_{\ell-2}\cdots \subset P_1\subset Q}  {\nu^{1\over m}(P_1)\over \mu(P_1)} \cdots {\nu^{1\over m}(P_{\ell-1})\over \mu(P_{\ell-1})}    \Big( A_{\tilde{\mathcal S},\nu^{1\over m}}( |h|) \Big)_{P_{\ell-1}} \mu(P_{\ell-1}),
  \end{align*}
where $A_{\tilde{\mathcal S},\nu^{1\over m}}( |h|)(x)  := A_{\tilde{\mathcal S}}( |h|)(x)\nu^{1\over m}(x)$
and $ A_{\tilde{\mathcal S}}(h):= \sum_{Q\in\tilde{\mathcal S}} h_Q \chi_Q.$

By iteration, we obtain that
\begin{align*}
   \int_Q |h(x)| \bigg(\sum_{P\in \tilde{\mathcal S}, P\subset Q}  {\nu^{1\over m}(P)\over \mu(P)} \chi_P(x)\bigg)^{\ell} d\mu(x)
   &\leq C \int_Q A^\ell_{\tilde{\mathcal S},\nu^{1\over m}}( |h|)(x)\, d\mu(x),
   \end{align*}
where $A^\ell_{\tilde{\mathcal S},\nu^{1\over m}}$ denotes the $\ell$-fold iteration of $A_{\tilde{\mathcal S},\nu^{1\over m}}$.
Then we have
\begin{align*}
\|A_b^{m,k}f\|_{L^p_\lambda(X)}
&\leq C \|b\|_{{\rm BMO}_{\nu^{1\over m}}(X)}^m \sup_{g: \|g\|_{L^{p'}_{\lambda}(X)}=1}\sum_{Q\in\mathcal S}  \Bigg({1\over \mu(Q)}\int_Q  A^{m-k}_{\tilde{\mathcal S},\nu^{1\over m}}( |g|\,\lambda)(x) d\mu(x)\Bigg) \\
&\quad\quad\times\bigg( {1\over \mu(Q)}\int_Q   A^{k}_{\tilde{\mathcal S},\nu^{1\over m}}( |f|)(z) d\mu(z) \bigg)\, \mu(Q)\\
&\leq C \|b\|_{{\rm BMO}_{\nu^{1\over m}}(X)}^m \sup_{g: \|g\|_{L^{p'}_{\lambda}(X)}=1}   \Bigg(\int_X A_{\tilde{\mathcal S}}\Big( A^{k}_{\tilde{\mathcal S},\nu^{1\over m}}( |f|)\Big)(x)   A^{m-k}_{\tilde{\mathcal S},\nu^{1\over m}}( |g|\,\lambda)(x) d\mu(x)\Bigg).
\end{align*}
Observe that $A_{\tilde{\mathcal S}}$ is self-adjoint. We have
\begin{align*}
&\int_X A_{\tilde{\mathcal S}}\Big( A^{k}_{\tilde{\mathcal S},\nu^{1\over m}}( |f|)\Big)(x)   A^{m-k}_{\tilde{\mathcal S},\nu^{1\over m}}( |g|\,\lambda)(x) d\mu(x)\\
&=\int_X A_{\tilde{\mathcal S}}\bigg(A_{\tilde{\mathcal S}}\Big( A^{k}_{\tilde{\mathcal S},\nu^{1\over m}}( |f|)\Big)\bigg)(x)   A^{m-k-1}_{\tilde{\mathcal S},\nu^{1\over m}}( |g|\,\lambda)(x) d\mu(x)\\
&=\cdots\\
&=\int_X A_{\tilde{\mathcal S}}\Big( A^{m}_{\tilde{\mathcal S},\nu^{1\over m}}( |f|)\Big)(x)    |g(x)|\,\lambda(x) d\mu(x).
\end{align*}
Then from H\"older's inequality, we further have
\begin{align*}
&\|A_b^{m,k}f\|_{L^p_\lambda(X)}\\
&\leq C \|b\|_{{\rm BMO}_{\nu^{1\over m}}(X)}^m \sup_{g: \|g\|_{L^{p'}_{\lambda}(X)}=1}
\Bigg(\int_X \bigg[A_{\tilde{\mathcal S}}\Big( A^{m}_{\tilde{\mathcal S},\nu^{1\over m}}( |f|)\Big)(x) \bigg]^p  \,\lambda(x) d\mu(x)\Bigg)^{1\over p}\|g\|_{L^{p'}_\lambda(X)}\\
&\leq C \|b\|_{{\rm BMO}_{\nu^{1\over m}}(X)}^m [\lambda]_{A_p}^{\max\{1,{1\over p-1}\}}
\big\| A^{m}_{\tilde{\mathcal S},\nu^{1\over m}}( |f|)\big\|_{L^p_\lambda(X)}\\
&= C \|b\|_{{\rm BMO}_{\nu^{1\over m}}(X)}^m [\lambda]_{A_p}^{\max\{1,{1\over p-1}\}}
\Big\| A_{\tilde{\mathcal S}}\Big(A^{m-1}_{\tilde{\mathcal S},\nu^{1\over m}}( |f|)\Big)\Big\|_{L^p_{\lambda\cdot \nu^{p\over m}}(X)}\\
&\leq C \|b\|_{{\rm BMO}_{\nu^{1\over m}}(X)}^m \Big([\lambda]_{A_p} [\lambda\cdot \nu^{p\over m}]_{A_p}\Big)^{\max\{1,{1\over p-1}\}}
\Big\| A^{m-1}_{\tilde{\mathcal S},\nu^{1\over m}}( |f|)\Big\|_{L^p_{\lambda\cdot \nu^{p\over m}}(X)}.
\end{align*}
Then by iteration we have that
\begin{align*}
\|A_b^{m,k}f\|_{L^p_\lambda(X)}
&\leq C \|b\|_{{\rm BMO}_{\nu^{1\over m}}(X)}^m \Big([\lambda]_{A_p} {[\lambda\cdot \nu^{p\over m}]_{A_p}\cdots   [\lambda\cdot\nu^{mp\over m} ]_{A_p}  }\Big)^{\max\{1,{1\over p-1}\}}
\|  f\|_{L^p_{\lambda\cdot \nu^{mp\over m}}(X)}\\
&\leq C \|b\|_{{\rm BMO}_{\nu^{1\over m}}(X)}^m \Big([\lambda]_{A_p}[\mu]_{A_p} \prod_{i=1}^{m-1}[\lambda^{1-{i\over m}} \cdot\nu^{i\over m} ]_{A_p}\Big)^{\max\{1,{1\over p-1}\}}
\|  f\|_{L^p_{\nu}(X)}.
\end{align*}
We note that by H\"older's inequality we have
$$\prod_{i=1}^{m-1}[\lambda^{1-{i\over m}} \cdot\nu^{i\over m} ]_{A_p} \leq  \Big( [\lambda]_{A_p}[\mu]_{A_p}\Big)^{m-1\over2}.$$
As a consequence, we have that
\begin{align*}
\|A_b^{m,k}f\|_{L^p_\lambda(X)}
&\leq C \|b\|_{{\rm BMO}_{\nu^{1\over m}}(X)}^m \Big([\lambda]_{A_p}[\mu]_{A_p} \Big)^{{m+1\over2}\cdot\max\{1,{1\over p-1}\}}
\|  f\|_{L^p_{\nu}(X)}.
\end{align*}

\section{Lower Bound of the Commutator $T_b^m$: Proof of Theorem \ref{thm2}}\label{s6}

In this section, we use some ideas from \cite{Hyt,LOR,LOR2} and adapt them to our general setting
with the aim to prove Theorem \ref{thm2}.
To begin with, let $T$ be the Calder\'on--Zygmund operator as in Definition \ref{def 1} with the kernel $K$ and $\omega$ satisfying $\omega(t)\to0$ as $t\to0$, and satisfy the homogeneous condition as in
\eqref{e-assump cz ker low bdd weak}.

We first introduce  another version of the homogeneous condition: There exist positive constants $3\le A_1\le A_2$ such that for any ball $B:=B(x_0, r)\subset X$, there exist balls $\widetilde B:=B(y_0, r)$
such that $A_1 r\le d(x_0, y_0)\le A_2 r$, 
and  for all $(x,y)\in ( B\times \widetilde{B})$, $K(x, y)$ does not change sign and
\begin{equation}\label{e-assump cz ker low bdd}
|K(x, y)|\gs \frac1{\mu(B)}.
\end{equation}
If the kernel $K(x, y):=K_1(x,y)+i K_2(x,y)$ is complex-valued, where $i^2=-1$,
then at least one of $K_i$ satisfies \eqref{e-assump cz ker low bdd}.

Then we first point out that the homogeneous condition \eqref{e-assump cz ker low bdd weak} implies
\eqref{e-assump cz ker low bdd}.

\begin{prop}\label{prop homogeneous}
Let $T$ be the Calder\'on--Zygmund operator as in Definition \ref{def 1} with the kernel $K$ and $\omega$ satisfying $\omega(t)\to0$ as $t\to0$, and satisfy the homogeneous condition as in
\eqref{e-assump cz ker low bdd weak}. Then $T$ satisfies \eqref{e-assump cz ker low bdd}.
\end{prop}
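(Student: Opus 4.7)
The plan is to upgrade the weak condition \eqref{e-assump cz ker low bdd weak} to the stronger condition \eqref{e-assump cz ker low bdd} by applying the weak condition at a sufficiently enlarged scale and then using the kernel smoothness in both variables to propagate the resulting pointwise lower bound throughout a product of balls $B\times \widetilde B$ of the original radius $r$.

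Given $B=B(x_0,r)$, I would first choose a large absolute constant $R$ (depending on $A_0$, $c_0$, $C_\mu$, and the modulus $\omega$, to be fixed at the end of the argument), and apply \eqref{e-assump cz ker low bdd weak} at the ball $B(x_0,Rr)$ rather than at $B$ itself. This produces a point $y_0$ with
\[
Rr\le d(x_0,y_0)\le \overline{C}Rr\quad\text{and}\quad |K(x_0,y_0)|\ge \frac{1}{c_0\,\mu(B(x_0,Rr))}.
\]
Setting $\widetilde B:=B(y_0,r)$, $A_1:=\max\{R,3\}$ and $A_2:=\overline{C}R$, the separation requirement of \eqref{e-assump cz ker low bdd} is immediate.

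Next I would use the smoothness \eqref{smooth of C-Z-S-I-O} to bound $|K(x,y)-K(x_0,y_0)|$ uniformly on $B\times\widetilde B$. Splitting the increment into a shift of the first variable from $x_0$ to $x$ and a shift of the second variable from $y_0$ to $y$, the two pieces are controlled by the first and second summands in \eqref{smooth of C-Z-S-I-O}, respectively. Imposing $R\ge 2A_0^2+A_0$ guarantees the smallness hypotheses $d(x,x_0)\le (2A_0)^{-1}d(x_0,y_0)$ and $d(y,y_0)\le (2A_0)^{-1}d(y_0,x)$, and forces the arguments of $\omega$ to be of order $1/R$. Using doubling to compare $V(x_0,y_0)$ and $V(y_0,x)$ with $\mu(B(x_0,Rr))$ (both are equivalent up to constants depending on $A_0$, $\overline{C}$ and $C_\mu$), each shift is bounded by $C\,\omega(c/R)/\mu(B(x_0,Rr))$, so the ratio to the lower bound $|K(x_0,y_0)|\ge 1/(c_0\mu(B(x_0,Rr)))$ is at most $Cc_0\,\omega(c/R)$.

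Since $\omega(t)\to 0$ as $t\to 0$, I can now fix $R$ large enough so that this ratio is below $1/2$, which yields $|K(x,y)|\ge \tfrac12 |K(x_0,y_0)|\gs 1/(R^n\mu(B))$ by doubling; because $R$ is absolute, the implicit constant depends only on the data. Continuity of $K$ (which is a consequence of the smoothness assumption with $\omega(0)=0$) together with the fact that the total variation is less than half the value at $(x_0,y_0)$ prevents $K$ from changing sign on $B\times\widetilde B$. For a complex-valued $K=K_1+iK_2$, at least one component satisfies $|K_j(x_0,y_0)|\ge |K(x_0,y_0)|/\sqrt 2$, and since the size bound \eqref{size of C-Z-S-I-O} and smoothness \eqref{smooth of C-Z-S-I-O} transfer verbatim to $K_j$, the same argument applied to that component delivers \eqref{e-assump cz ker low bdd}. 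The main bookkeeping obstacle is tracking how the doubling constants relate $V(x_0,y_0)$, $V(y_0,x)$ and $\mu(B(x_0,Rr))$ to $\mu(B)$ as the scale is blown up by the factor $R$, but all such ratios are uniformly controlled by $A_0$, $\overline{C}$, $C_\mu$ and the fixed choice of $R$, so the argument remains quantitative.
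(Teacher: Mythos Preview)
Your proposal is correct and follows essentially the same strategy as the paper: apply the weak lower bound \eqref{e-assump cz ker low bdd weak} at an enlarged radius $Rr$ (the paper writes $Ar$), use the smoothness \eqref{smooth of C-Z-S-I-O} in each variable to control $|K(x,y)-K(x_0,y_0)|$ on $B\times\widetilde B$, and then fix $R$ large enough (using $\omega(t)\to0$) so that this oscillation is a small fraction of $|K(x_0,y_0)|$; the complex case is handled identically by passing to the dominant real or imaginary part. The only cosmetic difference is that the paper fixes the smallness threshold for $\omega$ first and then reads off the admissible $A$, whereas you leave $R$ free until the end, but the content is the same.
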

\begin{proof}
Let $T$ be the Calder\'on--Zygmund operator as in Definition \ref{def 1} with the kernel $K$ and $\omega$ satisfying $\omega(t)\to0$ as $t\to0$, and satisfy the homogeneous condition as in
\eqref{e-assump cz ker low bdd weak}.
Since $\omega(t)\to0$ as $t\to0$, there exists $\delta\in(0,1)$ such that when $0<t<\delta$,
$$ \omega(t)< {1\over 20\cdot 3^n\cdot C\cdot C_\mu \cdot c_0}, $$
where $c_0$ is from \eqref{e-assump cz ker low bdd weak}, $C$ is from Definition \ref{def 1} and $C_\mu$ is from \eqref{upper dimension}.

For all  numbers $A$ with
\begin{align}\label{A}
A>\max\Big\{3, 2+{1\over\delta}, 2A_0\Big\},
\end{align}
and for any ball $B:=B(x_0, r)\subset X$, according to the homogeneous condition  \eqref{e-assump cz ker low bdd weak}, there exists a point $y_0\in B(x_0,\overline C Ar)\backslash B(x_0,Ar)$
such that
\begin{align}\label{Kx0y0}
 |K(x_0,y_0)|\geq {1\over c_0 \mu(B(x_0,Ar))}.
\end{align}

Next, from the smoothness condition \eqref{smooth of C-Z-S-I-O},
we have that for every $x\in B(x_0,r)$ and $y\in B(y_0,r)$,
\begin{align*}
|K(x,y)-K(x_0,y_0)|
&\leq |K(x,y)-K(x,y_0)|+|K(x,y_0)-K(x_0,y_0)|\\
&\leq {C\over V(x,y)}\omega\Big({d(y, y_0)\over d(x,y)}\Big)+{C\over V(x_0,y_0)}\omega\Big({d(x, x_0)\over d(x_0,y_0)}\Big)\\
&\leq {C\over \mu(B(x_0,(A-2)r))}\omega\Big({r\over (A-2)r}\Big)+ {C\over \mu(B(x_0,Ar))}\omega\Big({r\over Ar}\Big)\\
&\leq {2C\over \mu(B(x_0,(A-2)r))}\omega\Big({1\over A-2}\Big),
\end{align*}
where we use the fact that $\omega(t)$ is increasing. Next, by \eqref{upper dimension}, we obtain that
\begin{align*}
|K(x,y)-K(x_0,y_0)|
\leq 2C C_\mu \Big({A\over A-2}\Big)^n\omega\Big({1\over A-2}\Big)\  {1\over \mu(B(x_0,Ar))}\leq {1\over 10c_0 \mu(B(x_0,Ar))},
\end{align*}
where the last inequality follows from the choice of $A$ as in \eqref{A}.

We now fix a positive number $A_1$ satisfying \eqref{A} and set $A_2 := \overline C A_1$.

We first consider the kernel $K(x,y)$ to be a real-valued function. If $K(x_0,y_0)>0$, then for every $x\in B(x_0,r)$ and $y\in B(y_0,r)$
we have that
\begin{align*}
K(x,y) &=K(x_0,y_0)- (K(x_0,y_0)-K(x,y))\geq K(x_0,y_0)- |K(x,y)-K(x_0,y_0)|\\
&\geq {1\over c_0 \mu(B(x_0,Ar))} - {1\over 10c_0 \mu(B(x_0,Ar))}> {1\over 2c_0 \mu(B(x_0,Ar))}.
\end{align*}
Similarly if $K(x_0,y_0)<0$, then  every $x\in B(x_0,r)$ and $y\in B(y_0,r)$
we have that $$K(x,y) < -{1\over 2c_0 \mu(B(x_0,Ar))}.$$
Thus, combining these two cases we obtain that \eqref{e-assump cz ker low bdd} holds.

Next we consider the kernel $K(x,y)$ to be a complex function. We write
$K(x,y)= K_1(x,y)+ i K_2(x,y)$, with $i^2=-1$. Then \eqref{Kx0y0} implies that
$${\rm either}\ \  |K_1(x_0,y_0)|\geq {\sqrt 2\over 2c_0 \mu(B(x_0,Ar))}  {\rm\ \ \ \  or\ \ \ \ }  |K_2(x_0,y_0)|\geq {\sqrt 2\over 2 c_0 \mu(B(x_0,Ar))}.$$

Suppose $|K_j(x_0,y_0)|\geq {\sqrt 2\over 2c_0 \mu(B(x_0,Ar))}$ for some $j\in\{1,2\}$. If $K_j(x_0,y_0)>0$, then every $x\in B(x_0,r)$ and $y\in B(y_0,r)$
we have that
\begin{align*}
K_j(x,y) &=K_j(x_0,y_0)- (K_j(x_0,y_0)-K_j(x,y))\geq K_j(x_0,y_0)- |K(x,y)-K(x_0,y_0)|\\
&\geq {\sqrt2\over 2c_0 \mu(B(x_0,Ar))} - {1\over 10c_0 \mu(B(x_0,Ar))}> {1\over 2c_0 \mu(B(x_0,Ar))}.
\end{align*}
Similarly if $K_j(x_0,y_0)<0$ for some $j\in\{1,2\}$, then  every $x\in B(x_0,r)$ and $y\in B(y_0,r)$
we have that $$K_j(x,y) < -{1\over 2c_0 \mu(B(x_0,Ar))}.$$
Thus, \eqref{e-assump cz ker low bdd} holds for $K_j(x,y)$.
%
%
%
%
%

The proof of Proposition \ref{prop homogeneous} is complete.
\end{proof}



\begin{defn}\label{d-median value}
  By a median value of a real-valued measurable function $f$ over $B$ we mean a possibly non-unique, real number $\alpha_B(f)$ such that
$$\mu(\{x\in B: f(x)>\alpha_B(f)\})\leq \frac12\mu(B)\,\, \mbox{and}\,\,\mu(\{x\in B: f(x)<\alpha_B(f)\})\leq \frac12\mu(B). $$
\end{defn}
It is known that for a given function $f$ and ball $B$, the median value exists and may not be unique; see, for example, \cite{J}.

\begin{lem}\label{l-bmo decomp low bdd}
  Let $b$ be a real-valued measurable function.
For any ball $B$, let $\widetilde B$ be as in \eqref{e-assump cz ker low bdd}. Then
there exist measurable sets $E_1, E_2\subset  B$, and $F_1, F_2\subset \widetilde B$, such that
\begin{itemize}
  \item[{\rm (i)}] $ B=E_1\cup E_2$,  $\widetilde B=F_1\cup F_2$ and $\mu(F_i)\ge\frac{1}{2}\mu(\widetilde B)$, $i=1,2$;
  \item[{\rm (ii)}]  $b(x)-b(y)$ does not change sign for all $(x, y)$ in $E_i\times F_i$, $i=1,2$;
  \item[{\rm (iii)}]  $|b(x)-\alpha_{\widetilde B}(b)|\leq |b(x)-b(y)|$ for all $(x, y)$ in $E_i\times F_i$, $i=1,2$.
\end{itemize}
\end{lem}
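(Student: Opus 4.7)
The proof plan is to choose the four sets in terms of a median value $\alpha := \alpha_{\widetilde B}(b)$ of $b$ over $\widetilde B$ (which exists by the remark after Definition \ref{d-median value}), pairing the sets on $B$ and $\widetilde B$ in opposite orientations so that each product $E_i\times F_i$ lies on a single side of $\alpha$.

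Concretely, I would define
\begin{align*}
E_1 &:= \{x\in B : b(x)\geq \alpha\}, & F_1 &:= \{y\in\widetilde B : b(y)\leq \alpha\},\\
E_2 &:= \{x\in B : b(x)\leq \alpha\}, & F_2 &:= \{y\in\widetilde B : b(y)\geq \alpha\}.
\end{align*}
Then $B = E_1\cup E_2$ and $\widetilde B = F_1\cup F_2$ trivially (every real is either $\geq \alpha$ or $\leq \alpha$), and the lower bounds $\mu(F_i)\geq \frac12\mu(\widetilde B)$ follow directly from the definition of a median: for instance $\mu(F_1) = \mu(\widetilde B) - \mu(\{y\in\widetilde B : b(y) > \alpha\}) \geq \mu(\widetilde B) - \tfrac12\mu(\widetilde B)$.

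Next I would verify (ii) and (iii) on each pair. For $(x,y)\in E_1\times F_1$ one has $b(x)\geq \alpha\geq b(y)$, whence $b(x)-b(y)\geq b(x)-\alpha\geq 0$; thus $b(x)-b(y)$ has constant (nonnegative) sign, giving (ii), and
\[
|b(x)-\alpha| \;=\; b(x)-\alpha \;\leq\; b(x)-b(y) \;=\; |b(x)-b(y)|,
\]
giving (iii). The case $(x,y)\in E_2\times F_2$ is symmetric: $b(y)\geq \alpha \geq b(x)$ yields $b(x)-b(y)\leq 0$ and $|b(x)-\alpha|=\alpha-b(x)\leq b(y)-b(x)=|b(x)-b(y)|$.

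This lemma is essentially bookkeeping once the median is invoked, so I do not expect a real obstacle. The only point that needs mild care is the choice of nonstrict inequalities in defining $E_i$ and $F_i$: using $\geq$ and $\leq$ (rather than $>$ and $<$) ensures both that $E_1\cup E_2=B$ and $F_1\cup F_2=\widetilde B$ cover everything and that the median inequalities force $\mu(F_i)\geq \tfrac12\mu(\widetilde B)$, while the nested inequality $b(x)\geq\alpha\geq b(y)$ (or its reverse) is exactly what makes $|b(x)-\alpha|$ dominated by $|b(x)-b(y)|$ in (iii). No use of the kernel condition \eqref{e-assump cz ker low bdd} is needed for the construction itself; that condition only fixes the geometric location of $\widetilde B$ relative to $B$ and will be exploited in the subsequent arguments that use this decomposition.
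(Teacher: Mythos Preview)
Your proposal is correct and essentially identical to the paper's proof: the paper defines the same four sets $E_1,E_2,F_1,F_2$ using nonstrict inequalities against the median $\alpha_{\widetilde B}(b)$, invokes the median definition for $\mu(F_i)\ge\tfrac12\mu(\widetilde B)$, and verifies (ii)--(iii) via the same nested inequality $b(x)\ge\alpha\ge b(y)$ (and its reverse). The only cosmetic difference is that the paper writes the key step as $|b(x)-b(y)|=|b(x)-\alpha|+|\alpha-b(y)|\ge|b(x)-\alpha|$, whereas you phrase it as $b(x)-\alpha\le b(x)-b(y)$; these are the same observation.
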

\begin{proof}
For the given balls $B$ and $\widetilde B$, following the idea in \cite[Proposition 3.1]{LOR2} we set
$$
F_1:= \{y\in \widetilde B: b(y)\leq \alpha_{\widetilde B}(b)\}\ \ \ \text{and}\ \ \ \ \ F_2:= \{y\in \widetilde B: b(y)\geq \alpha_{\widetilde B}(b)\}.
$$
Moreover, we define
$$ E_1:=\{x\in  B: b(x)\geq \alpha_{\widetilde B}(b)\}\ \ \ \mbox{and}\ \ \ \ E_2:=\{x\in  B: b(x)\leq \alpha_{\widetilde B}(b)\}
$$
Then by Definition \ref{d-median value}, we see that $\mu(F_i)\ge\frac{1}{2}\mu(\widetilde B)$, $i=1,2$.
Moreover, for $(x,y)\in E_i\times F_i$, $i=1,2$,
\begin{align*}
|b(x)-b(y)|
&= \lf|b(x)-\alpha_{\widetilde B}(b)+\alpha_{\widetilde B}(b)-b(y)\r|\\
&=
\lf|b(x)-\alpha_{\widetilde B}(b)\r|+\lf|\alpha_{\widetilde B}(b)-b(y)\r|\geq \lf|b(x)-\alpha_{\widetilde B}(b)\r|.
\end{align*}
This finishes the proof of Lemma \ref{l-bmo decomp low bdd}.
\end{proof}

We now return to the proof of Theorem \ref{thm2}, following the approach and method in \cite{LOR2}.
\begin{proof}[Proof of Theorem \ref{thm2}]
For given $b\in L^1_{\rm loc}(X)$ and for any ball $B$, let $\Omega(b,B)$ be the oscillation
as in \eqref{Omega oscillation}.
Under the assumptions of Theorem \ref{thm2},
we will show that for any ball $B$,
\begin{align}\label{e-mean osci weigh upp bdd}
\Omega(b, B)\ls \frac{{\nu^{1\over m}}(B)}{\mu(B)}.
\end{align}

Without loss of generality, we assume that $K(x,y)$ is real-valued.
Let $B$ be a ball. We apply the assumption \eqref{e-assump cz ker low bdd} and Lemma \ref{l-bmo decomp low bdd}
to get sets $E_i, F_i,\,i=1,2$.

On the one hand, by Lemma \ref{l-bmo decomp low bdd} and \eqref{e-assump cz ker low bdd}, we have that for $f_i:=\chi_{F_i}$, $i=1,2$,
\begin{align*}
\frac1{\mu(B)}\sum_{i=1}^2\int_{ B}\lf|T^m_bf_i(x)\right|\,d\mu(x)
&\ge\frac1{\mu(B)}\sum_{i=1}^2\int_{E_i}\lf|T^m_bf_i(x)\right|\,d\mu(x)\\
&=\frac1{\mu(B)}\sum_{i=1}^2\int_{E_i}\int_{F_i}|b(x)-b(y)|^m|K(x,y)|\,d\mu(y)\,d\mu(x)\\
&\gs\frac1{\mu(B)}\sum_{i=1}^2\int_{E_i}\int_{F_i}\frac{|b(x)-\alpha_{\widetilde B}(b)|^m}{\mu(B)}\,d\mu(y)\,d\mu(x)\\
&\gs \frac1{\mu(B)}\int_B\lf|b(x)-\alpha_{\widetilde B}(b)\r|^m\,d\mu(x)\\
&\gs  \Omega(b; B)^m.
\end{align*}

On the other hand, from H\"older's inequality and the boundedness of $T^m_b$, we deduce that
\begin{align*}
&\frac1{\mu(B)}\sum_{i=1}^2\int_{ B}\lf|T^m_bf_i(x)\right|\,d\mu(x)\\
&\quad\le \frac1{\mu(B)}\sum_{i=1}^2\lf[\int_{ B}\lf|T^m_bf_i(x)\right|^p\lambda_2(x)\,d\mu(x)\r]^{1/p}\lf(\int_{ B}\lambda_2(x)^{-\frac1{p-1}}d\mu(x)\right)^{1/p'}\\
&\quad\ls \frac1{\mu(B)}\sum_{i=1}^2[\lambda_1(F_i)]^{1/p}\lf(\int_{ B}\lambda_2(x)^{-\frac1{p-1}}d\mu(x)\right)^{1/p'}\\
&\quad\ls \frac1{\mu(B)}[\lambda_1(\widetilde B)]^{1/p}\lf(\int_{ B}\lambda_2(x)^{-\frac1{p-1}}d\mu(x)\right)^{1/p'}\\
&\quad\ls \frac1{\mu(B)}[\lambda_1(B)]^{1/p}\lf(\int_{B}\lambda_2(x)^{-\frac1{p-1}}d\mu(x)\right)^{1/p'},
\end{align*}
where in the last inequality, we use the fact that 
$K_1 r_B\le d(x_B, x_{\widetilde B})\le K_2 r_B$ and  $\lambda_1\in A_p$. 

Combining the two inequalities above and invoking $\lambda_i\in A_p$, we conclude that
\begin{align*}
\Omega(b, B)^m\ls\frac1{\mu(B)}[\lambda_1(B)]^{1/p}\lf(\int_{B}\lambda_2(x)^{-\frac1{p-1}}d\mu(x)\right)^{1/p'}\ls
\bigg(\frac{\nu^{1\over m}(B)}{\mu(B)}\bigg)^m,
\end{align*}
where the last inequality follows from the argument as in the proof of Theorem 1.1 in \cite{LOR2}, by using \eqref{e-reverse holder}.
Thus, \eqref{e-mean osci weigh upp bdd} holds and hence, the proof of Theorem \ref{thm2} is complete.
\end{proof}



\section{Weighted Hardy space, Duality and Weak Factorisation: Proof of Theorem \ref{thm5}}\label{s3}
\noindent
In this section we study the weighted Hardy, BMO spaces and duality, as well as their dyadic versions on spaces of homogeneous type.

\subsection{Dyadic Littlewood--Paley Square Function}
Following the form in \cite{HLW} we now introduce the dyadic Littlewood--Paley square function on spaces of homogeneous type.
\begin{defn}\label{def:dyadic_discrete_square_function}
Given a dyadic grid $\mathscr{D}$ on $X$, the dyadic square function $S_{\mathscr{D}}$ is defined by:
\begin{align*} 
S_{\mathscr{D}}f:=\bigg[\sum_{Q\in \mathscr{D}}\sum_{\epsilon=1}^{M_{Q}-1}|\langle f,h_{Q}^{\epsilon}\rangle |^2{\chi_{Q}\over \mu(Q)}\bigg]^{1\over 2}.
\end{align*}
\end{defn}

Our main result in this subsection is:
\begin{thm}\label{thm square}
Suppose $1<p<\infty$ and $w\in A_p$. Then we have
$$  \|S_\mathscr{D}f\|_{L^p_w(X)}\leq  C_p [w]_{A_p}^{\max\{1, {1\over p-1}\}}\|f\|_{L^p_{w}(X)}.   $$
\end{thm}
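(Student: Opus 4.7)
The plan is to prove Theorem \ref{thm square} by establishing a sparse bilinear-form domination for $S_{\mathscr{D}}$ and then invoking the weighted bound for sparse operators recorded just after Definition \ref{D:Sparse Operator}. The starting point is the unweighted $L^2$ identity: by the orthogonality of the Haar basis (Theorem \ref{thm:convergence} together with Theorem \ref{prop:HaarFuncProp}(v) and (vii)),
\begin{equation*}
\|S_{\mathscr{D}}f\|_{L^2(X,\mu)}^2 = \sum_{Q\in\mathscr{D}}\sum_{\epsilon=1}^{M_Q-1}|\langle f,h_Q^\epsilon\rangle|^2 \approx \|f\|_{L^2(X,\mu)}^2,
\end{equation*}
so $S_{\mathscr{D}}$ is bounded on $L^2(X,\mu)$ with norm independent of the weight.

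Next, for $f\in L^\infty$ with bounded support, I would produce an $\eta$-sparse family $\mathcal{S}\subset\mathscr{D}$ (possibly drawn across the $\mathpzc{T}$ adjacent dyadic systems of Theorem \ref{thm:existence2}, as in Theorem \ref{thm sparse2}) such that, for every nonnegative test function $g$,
\begin{equation*}
\int S_{\mathscr{D}}f(x)\, g(x)\, d\mu(x) \le C\sum_{Q\in\mathcal{S}} \langle |f|\rangle_Q\,\langle g\rangle_Q\,\mu(Q).
\end{equation*}
The argument mirrors the Lerner-type principal-cube scheme used in Theorem \ref{thm sparse2}: fix a top cube, descend through $\mathscr{D}$ selecting maximal sub-cubes on which $\langle |f|\rangle$ more than doubles, and iterate. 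The Carleson packing condition follows from the stopping rule and yields $\eta$-sparsity via Theorem \ref{thm sparse}. The local $L^2$ identity above controls, between consecutive stopping generations, the Haar-coefficient mass contributing to $S_{\mathscr{D}}$ by the local average of $|f|$, producing the linear-in-$\langle|f|\rangle_Q$ bound required for a sparse form.

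Once the sparse form bound is in hand, the desired weighted estimate is immediate: by duality,
\begin{equation*}
\|S_{\mathscr{D}}f\|_{L^p_w(X)} \le \sup_{\|g\|_{L^{p'}_{w^{1-p'}}}=1} \int S_{\mathscr{D}}f\cdot g\, d\mu \lesssim \sup_g\sum_{Q\in\mathcal{S}}\langle |f|\rangle_Q\langle g\rangle_Q\mu(Q),
\end{equation*}
and the bilinear sparse form is controlled by $\|\mathcal{A}_{\mathcal{S}}(|f|)\|_{L^p_w}\,\|g\|_{L^{p'}_{w^{1-p'}}}$ via H\"older. The sparse operator bound stated after Definition \ref{D:Sparse Operator}, namely $\|\mathcal{A}_{\mathcal{S}}\|_{L^p_w\to L^p_w}\lesssim [w]_{A_p}^{\max\{1,1/(p-1)\}}$, then yields the claimed inequality.

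The main obstacle is the sparse domination step: since $S_{\mathscr{D}}$ is sublinear rather than linear, the argument for Calder\'on--Zygmund operators in Theorem \ref{thm sparse2} cannot simply be recycled; one must combine a stopping-time decomposition with the local $L^2$ control of Haar coefficients, and verify that the ``tail'' square function (Haar cubes strictly containing a stopping cube) contributes a pointwise constant on that cube which is controlled by the average of $|f|$ on the ancestor principal cube. A secondary technical point, as in Theorem \ref{thm sparse2}, is to handle the finite overlap across adjacent dyadic systems, so that the $\eta$-sparsity constant depends only on the geometric constants of $(X,d,\mu)$ and the doubling constant $C_\mu$.
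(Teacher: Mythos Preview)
Your route via sparse domination is genuinely different from the paper's. The paper first proves the $L^2_w$ bound with linear dependence on $[w]_{A_2}$ by means of Lemma~\ref{lem A2} (a weighted-Haar decomposition combined with a Bellman-function Carleson embedding, following Petermichl--Pott), and then passes to general $p$ by sharp Rubio de Francia extrapolation on spaces of homogeneous type. No sparse domination enters; the paper's approach buys a short self-contained $p=2$ argument and offloads the $p\neq2$ case entirely to extrapolation, whereas a sparse bound, once obtained, gives all $p$ at once via the sparse-operator estimate already recorded after Definition~\ref{D:Sparse Operator}.

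A sparse bound for $S_{\mathscr{D}}$ is a legitimate alternative and does yield the stated conclusion, but your sketch mis-specifies the stopping rule, and this is a real gap. You propose selecting stopping cubes where $\langle|f|\rangle$ more than doubles; with that choice the ``tail'' on a stopping cube $P\subsetneq Q_0$,
\[
\sum_{R:\,P\subsetneq R\subseteq Q_0}\sum_{\epsilon}\frac{|\langle f,h_R^\epsilon\rangle|^2}{\mu(R)},
\]
has each term $\lesssim\langle|f|\rangle_{Q_0}^2$ by the stopping and $\|h_R^\epsilon\|_\infty\approx\mu(R)^{-1/2}$, but the number of generations between $P$ and $Q_0$ is unbounded, so one only obtains $\lesssim\langle|f|\rangle_{Q_0}^2\log\big(\mu(Q_0)/\mu(P)\big)$. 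The local $L^2$ Haar identity you invoke gives an \emph{integrated} bound $\int_{Q_0}\le\|f\|_{L^2(Q_0)}^2$, not the required pointwise control, and $\langle|f|^2\rangle_{Q_0}\lesssim\langle|f|\rangle_{Q_0}^2$ is false in general. The remedy is to use the stopping rule actually present in the proof of Theorem~\ref{thm sparse2}: set
\[
E:=\big\{x\in Q_0:\ \mathcal{M}f(x)>\alpha\langle|f|\rangle_{Q_0}\big\}\cup\big\{x\in Q_0:\ S_{\mathscr{D},Q_0}f(x)>\alpha\langle|f|\rangle_{Q_0}\big\},
\]
using the weak $(1,1)$ bound for $S_{\mathscr{D}}$ (which follows from Haar orthogonality and the Calder\'on--Zygmund decomposition) in place of that for $\mathcal{M}_{T,B_0}$, and take $\{P_j\}$ from the Calder\'on--Zygmund decomposition of $\chi_E$. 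Then each $P_j$ meets $E^c$, so the (constant) tail on $P_j$ is dominated by $S_{\mathscr{D},Q_0}f(\xi)^2\le\alpha^2\langle|f|\rangle_{Q_0}^2$ for some $\xi\in P_j\cap E^c$; combined with $\sqrt{a+b}\le\sqrt a+\sqrt b$ this gives the recursive estimate and hence $S_{\mathscr{D}}f\lesssim\mathcal{A}_{\mathcal S}|f|$ pointwise. With this correction your approach goes through.
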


We prove this theorem by following the idea in \cite[Theorem 3.1 and Corollary 3.2]{PP}. To begin with, we first introduce an auxiliary lemma.

\begin{lem}\label{lem A2}
Let $w$ be an $A_{2}$ weight in $(X, \rho, \mu)$. Then
\[
\sum_{Q\in \mathscr{D}} \sum_{\epsilon =1}^{M_{Q}-1} | \langle f, h_{Q}^{\epsilon} \rangle|^{2} \frac{1}{\langle w\rangle_{Q}} \lesssim [w]_{A_{2}} \|f\|_{L^{2}_{w^{-1}}(X)}^{2} \quad\text{ for all } f\in L^{2}_{w^{-1}}(X),
\]
where $\langle w\rangle_Q:= {1\over \mu(Q)}\int_Q w(x)d\mu(x)$.
\end{lem}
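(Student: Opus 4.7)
The plan is to prove Lemma \ref{lem A2} by combining two ingredients: a Parseval-type reduction of the Haar sum to a weighted martingale square function, and a weighted Carleson embedding estimate that ultimately rests on a Beznosova-type Carleson-packing inequality for $A_2$ weights.

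First I would use properties (v)--(vii) of Theorem \ref{prop:HaarFuncProp} to observe that $\{h_Q^\epsilon\}_{\epsilon=1}^{M_Q-1}$ is an orthonormal basis, up to uniform constants, for the mean-zero subspace of $\mathrm{span}\{\chi_R : R\in\mathcal{H}(Q)\}$, yielding the martingale identity
\[
\sum_{\epsilon=1}^{M_Q-1}|\langle f,h_Q^\epsilon\rangle|^2 \;\approx\; \sum_{R\in\mathcal{H}(Q)}\mu(R)\,|\langle f\rangle_R-\langle f\rangle_Q|^2.
\]
Re-indexing by children $R$ with parent $R^{(1)}$, and using \eqref{Cmu0} to compare $\langle w\rangle_{R^{(1)}}$ with $\langle w\rangle_R$, the lemma reduces to
\[
\sum_{R\in\mathscr{D}}\frac{\mu(R)}{\langle w\rangle_R}\,|\langle f\rangle_R-\langle f\rangle_{R^{(1)}}|^2 \;\lesssim\; [w]_{A_2}\,\|f\|_{L^2_{w^{-1}}(X)}^2.
\]

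Next, setting $\tau:=w\,d\mu$ and substituting $g:=f/w$ so that $\|g\|_{L^2(\tau)}^2=\|f\|_{L^2_{w^{-1}}}^2$ and $\langle f\rangle_R=\langle w\rangle_R\,\mathbb{E}_R^\tau g$, where $\mathbb{E}_R^\tau h := \tau(R)^{-1}\int_R h\,d\tau$, I would split the difference of weighted averages via the identity
\[
\langle f\rangle_R-\langle f\rangle_{R^{(1)}} = \langle w\rangle_R\bigl(\mathbb{E}_R^\tau g-\mathbb{E}_{R^{(1)}}^\tau g\bigr) + \bigl(\langle w\rangle_R-\langle w\rangle_{R^{(1)}}\bigr)\mathbb{E}_{R^{(1)}}^\tau g.
\]
After squaring and applying $(a+b)^2\le 2a^2+2b^2$, the ``pure martingale'' piece telescopes to $\sum_R w(R)|\mathbb{E}_R^\tau g-\mathbb{E}_{R^{(1)}}^\tau g|^2\le \|g\|_{L^2(\tau)}^2$ by $\tau$-orthogonality of martingale differences, costing no factor of $[w]_{A_2}$. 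The ``weight-oscillation'' piece is then handled by the weighted Carleson embedding theorem for $\tau$, which, after re-indexing children by their parent $Q=R^{(1)}$ and invoking the martingale identity for $w$ itself, reduces the task to verifying the Carleson-packing condition
\[
\sum_{Q\subseteq S}\sum_{\epsilon=1}^{M_Q-1}\frac{|\langle w,h_Q^\epsilon\rangle|^2}{\langle w\rangle_Q} \;\lesssim\; [w]_{A_2}\,w(S)\qquad\text{for every }S\in\mathscr{D}.
\]

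The main obstacle will be establishing this last Carleson-packing bound, which is the Beznosova lemma for $A_2$ weights. In $\mathbb{R}^n$ it is classical, but the adaptation to $(X,d,\mu)$ must accommodate the possibly variable number of children of a dyadic cube (controlled uniformly through the constant $M$ of Section \ref{sec:dyadic_cubes} and \eqref{Cmu0}) and use the reverse-Hölder-type inequality \eqref{e-reverse holder} available for $A_\infty$ weights as a substitute for its Euclidean counterpart. Once this estimate is secured in the homogeneous-type setting, combining the two stages above completes the proof of Lemma \ref{lem A2}.
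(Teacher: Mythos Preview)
Your approach is in spirit the same as the paper's: both decompose the sum into an ``orthogonal'' piece controlled by Parseval in $L^2_w$ and a ``weight-oscillation'' piece controlled by the Carleson embedding theorem together with the $A_2$ packing bound
\[
\sum_{Q\subseteq S}\sum_{\epsilon=1}^{M_Q-1}\frac{|\langle w,h_Q^\epsilon\rangle|^2}{\langle w\rangle_Q}\;\lesssim\;[w]_{A_2}\,w(S).
\]
The paper carries this out by introducing weighted Haar functions $h_Q^{w,\epsilon}$ and the disbalanced expansion $h_Q^\epsilon=C_Q(w,\epsilon)h_Q^{w,\epsilon}+D_Q(w,\epsilon)h_{E_\epsilon}^1$ (the terms $S_1$ and $S_3$ there are exactly your ``pure martingale'' and ``weight-oscillation'' pieces), and it derives the packing bound from a Bellman-function argument of Chung \cite{C} combined with Treil's Carleson embedding \cite{T}, rather than from the reverse-H\"older estimate \eqref{e-reverse holder} you cite. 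Your substitution $g=f/w$, $\tau=w\,d\mu$ is just a coordinate-free rewriting of the same decomposition, so nothing essentially new is happening.

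There is, however, a genuine slip in your order of operations. The step ``using \eqref{Cmu0} to compare $\langle w\rangle_{R^{(1)}}$ with $\langle w\rangle_R$'' replaces $1/\langle w\rangle_{R^{(1)}}$ by $1/\langle w\rangle_R$; since \eqref{Cmu0} gives $\langle w\rangle_R\le C_{\mu,0}\langle w\rangle_{R^{(1)}}$, this is indeed an upper bound, so the reduction itself is legitimate. But after the splitting, the weight-oscillation piece carries coefficients
\[
a_Q=\sum_{R\in\mathcal H(Q)}\frac{\mu(R)}{\langle w\rangle_R}\bigl(\langle w\rangle_R-\langle w\rangle_Q\bigr)^2,
\]
and to ``invoke the martingale identity for $w$'' and land on the packing condition you \emph{state} you would need $1/\langle w\rangle_R\lesssim 1/\langle w\rangle_Q$, which is false in general (take $w$ tiny on $R$ and large on $Q\setminus R$). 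The fix is simply not to re-index before splitting: keep $1/\langle w\rangle_Q$ in the denominator, apply your splitting of $\langle f\rangle_R-\langle f\rangle_Q$, and use $\langle w\rangle_R\le C_{\mu,0}\langle w\rangle_Q$ only on the pure-martingale term, where it enters as $\mu(R)\langle w\rangle_R^2/\langle w\rangle_Q\le C_{\mu,0}\,\tau(R)$. The second term then has the common factor $1/\langle w\rangle_Q$ already in place, and the martingale identity for $w$ delivers precisely the stated packing condition with no further comparison needed.
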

\begin{proof}
Recall from \cite{KLPW}, we have
$
h_{Q}^{\epsilon} = a_{\epsilon}\chi_{Q_{\epsilon}} - b_{\epsilon}  \chi_{E_{\epsilon +1}},
$
where
\[
a_{\epsilon}:=\sqrt{\frac{\mu(E_{\epsilon +1})}{\mu(Q_{\epsilon})\mu(E_{\epsilon})}} \text{, }\ \  b_{\epsilon}:=\sqrt{\frac{\mu(Q_{\epsilon})}{\mu(E_{\epsilon})\mu(E_{\epsilon+1})}} \quad\text{ and } \quad E_{\epsilon} = Q_{\epsilon} \cup E_{\epsilon+1},
\]
where $Q_{\epsilon}$ and $E_{\epsilon +1}$ are disjoint.
Now we introduce the weighted Haar system $\{h^{w,\epsilon}_{Q}\}_{1\leq \epsilon \leq M_{Q} -1, Q \in \mathscr{D}}$ in $L^{2}_{w} (X)$,
where
\[
h^{w,\epsilon}_{Q} := \frac{1}{\sqrt{w(E_{\epsilon})}} \Big( \frac{\sqrt{w(E_{\epsilon+1})}}{\sqrt{w(Q_{\epsilon})}} \chi_{Q_{\epsilon}} - \frac{\sqrt{w(Q_{\epsilon})}}{\sqrt{w(E_{\epsilon+1})}} \chi_{E_{\epsilon+1}} \Big).
\]
Note that when $w=1$, we have
\[
h^{1,\epsilon}_{Q} := h^{\epsilon}_{Q} = \frac{1}{\sqrt{\mu(E_{\epsilon})}} \Big( \frac{\sqrt{\mu(E_{\epsilon+1})}}{\sqrt{\mu(Q_{\epsilon})}} \chi_{Q_{\epsilon}} - \frac{\sqrt{\mu(Q_{\epsilon})}}{\sqrt{\mu(E_{\epsilon+1})}} \chi_{E_{\epsilon+1}} \Big).
\]
We set
\[{
h^{1}_{E_{\epsilon}} := \frac{\chi_{E_{\epsilon}}}{{\mu(E_{\epsilon})}}   },
\]
and write
$ h_{Q}^{\epsilon} = C_{Q}(w,\epsilon) h_{Q}^{w,\epsilon} + D_{Q}(w,\epsilon) h_{E_{\epsilon}}^{1}.
$

It is easy to see that $\int_{Q} h_{Q}^{w,\epsilon} dw = 0 \text{ and } \int_{Q} (h_{Q}^{w,\epsilon})^2 dw = 1$. This implies
\[ D_{Q}(w,\epsilon)  
= \frac{\hat{w} (Q,\epsilon)}{\langle w \rangle _{E_{\epsilon}}}, \quad{\rm where\ } { \hat{w} (Q,\epsilon):= \langle w, h_{Q}^{\epsilon} \rangle }
\]
and, after some computation,
\begin{align*}
C_{Q}(w,\epsilon) ^2 = &\frac{\mu(E_{\epsilon +1})}{\mu(E_{\epsilon})} \langle w \rangle_{Q_{\epsilon}} + \frac{\mu(Q_{\epsilon})}{\mu(E_{\epsilon})} \langle w \rangle _{E_{\epsilon +1}} - \frac{\mu(E_{\epsilon +1})}{\mu(E_{\epsilon})} \frac{w(Q_{\epsilon})}{w(E_{\epsilon})} \langle w \rangle_{Q_{\epsilon}}\\
& - \frac{\mu(Q_{\epsilon})}{\mu(E_{\epsilon})} \frac{w(E_{\epsilon +1})}{w(E_{\epsilon})} \langle w \rangle_{E_{\epsilon +1}}
+ 2 \frac{w(E_{\epsilon +1})}{w(E_{\epsilon})} \frac{w(Q_{\epsilon})}{\mu(E_{\epsilon})}.
\end{align*}
Note that it doesn't really matter what $C_{Q}(w,\epsilon)$ really is as long as we have some nice bound for it. In fact,
{from Lemma 4.6 in \cite{KLPW},} we have that
\begin{align*}
C_{Q}(w,\epsilon) ^2 &\leq \frac{\mu(E_{\epsilon +1})}{\mu(E_{\epsilon})} \langle w \rangle_{Q_{\epsilon}} + \frac{\mu(Q_{\epsilon})}{\mu(E_{\epsilon})} \langle w \rangle _{E_{\epsilon +1}}
\lesssim \frac{w(Q_{\epsilon})+w(E_{\epsilon +1})}{\mu(E_{\epsilon})}= \langle w \rangle_{E_{\epsilon}}\lesssim \langle w \rangle_{Q},
\end{align*}
which implies that
$
C_{Q}(w,\epsilon)^{2} \langle w \rangle_{Q}^{-1} \lesssim 1.
$

Now,
\begin{align*}
\sum_{Q\in \mathscr{D}} \sum_{\epsilon=1}^{M_{Q} -1} \langle w \rangle_{Q}^{-1} |\langle f, h_{Q}^{\epsilon} \rangle |^2 &= \sum_{Q\in \mathscr{D}} \sum_{\epsilon=1}^{M_{Q} -1} \langle w \rangle_{Q}^{-1} |\langle f, C_{Q}(w,\epsilon) h_{Q}^{w,\epsilon} + D_{Q}(w,\epsilon) h_{E_{\epsilon}}^{1} \rangle|^2\\
&= \sum_{Q\in \mathscr{D}} \sum_{\epsilon=1}^{M_{Q} -1} \langle w \rangle_{Q}^{-1} |C_{Q}(w,\epsilon) \langle f, h_{Q}^{w,\epsilon} \rangle + D_{Q}(w,\epsilon) \langle f, h_{E_{\epsilon}}^{1} \rangle|^2\\
&= \sum_{Q\in \mathscr{D}} \sum_{\epsilon=1}^{M_{Q} -1} \langle w \rangle_{Q}^{-1} C_{Q}(w,\epsilon)^{2} | \langle f, h_{Q}^{w,\epsilon} \rangle|^{2}\\
&\quad+ 2 \sum_{Q\in \mathscr{D}} \sum_{\epsilon=1}^{M_{Q} -1} \langle w \rangle_{Q}^{-1} C_{Q}(w,\epsilon)\, D_{Q}(w,\epsilon)\, \langle f, h_{Q}^{w,\epsilon} \rangle \,\langle f,h_{E_{\epsilon}}^{1}\rangle\\
&\quad+\sum_{Q\in \mathscr{D}} \sum_{\epsilon=1}^{M_{Q} -1} \langle w \rangle_{Q}^{-1} D_{Q}(w,\epsilon)^{2} |\langle f,h_{E_{\epsilon}}^{1} \rangle |^{2}\\
&=: S_{1} + S_{2} + S_{3}.
\end{align*}
$S_{2}$ can be bounded by $\sqrt{S_{1}}\sqrt{S_{3}}$, so it suffices to bound $S_{1}$ and $S_{3}$. By using the bound on $C_{Q}(w,\epsilon)$, we have
\begin{align*}
S_{1} 
&\lesssim \sum_{Q\in \mathscr{D}} \sum_{\epsilon=1}^{M_{Q} -1} |\langle f, h_{Q}^{w,\epsilon} \rangle |^2= \sum_{Q\in \mathscr{D}} \sum_{\epsilon=1}^{M_{Q} -1} |\langle w^{-1} f, h_{Q}^{w,\epsilon} \rangle_{L^{2}_{w}(X)} |^2
\leq \|f\|_{L^{2}_{w^{-1}}(X)}^{2}.
\end{align*}
On the other hand,
\begin{align*}
S_{3} &= \sum_{Q\in \mathscr{D}} \sum_{\epsilon=1}^{M_{Q} -1} \langle w \rangle_{Q}^{-1} D_{Q}(w,\epsilon)^{2} |\langle f,h_{E_{\epsilon}}^{1} \rangle |^{2}= \sum_{Q\in \mathscr{D}} \sum_{\epsilon=1}^{M_{Q} -1} D_{Q}(w,\epsilon)^{2} \langle f \rangle_{E_{\epsilon}}^{2} \langle w \rangle _{Q}^{-1}.
\end{align*}
Now,
\begin{align*}
\frac{1}{\mu(E_{\epsilon})} \sum_{R\subseteq Q} \sum_{\eta: E_{\eta}\subseteq E_{\epsilon}} D_{R}(w,\eta)^2 \langle w \rangle _{R}^{-1} \langle w \rangle_{E_{\eta}}^{2} &= \frac{1}{\mu(E_{\epsilon})} \sum_{R\subseteq Q} \sum_{\eta: E_{\eta}\subseteq E_{\epsilon}} \frac{\hat{w}(R,\eta)^{2}}{\langle w \rangle_{E_{\eta}}^{2}} \langle w \rangle _{R}^{-1} \langle w \rangle_{E_{\eta}}^{2}\\
&\lesssim \frac{1}{\mu(E_{\epsilon})} \sum_{R\subseteq Q} \sum_{\eta: E_{\eta}\subseteq E_{\epsilon}} \hat{w}(R,\eta)^{2} { \langle w \rangle_{R}^{-1}  }\\
&\lesssim [w]_{A_{2}} \langle w \rangle_{E_{\epsilon}},
\end{align*}
where the last inequality follows from a Bellman function technique that can be found in \cite{C}. Thus, by adopting the remark of Treil in \cite[Section 5]{T} on the dyadic Carleson Embedding Theorem on a general space of homogeneous type, we get:
\begin{align*}
S_{3} &= \sum_{Q\in \mathscr{D}} \sum_{\epsilon=1}^{M_{Q} -1} D_{Q}(w,\epsilon)^{2} \langle f \rangle_{E_{\epsilon}}^{2} \langle w \rangle _{Q}^{-1}\lesssim [w]_{A_{2}} \|fw^{-\frac{1}{2}}\|_{L^{2}(X)}^{2}.
\end{align*}
The proof of Lemma \ref{lem A2} is complete.
\end{proof}

\begin{proof}[Proof of Theorem \ref{thm square}]
Suppose  $w\in A_2$. Following the argument in the proof of Theorem 3.1 in \cite{PP}, we obtain that the Lemma \ref{lem A2} above implies that
\begin{align*}
\|f\|_{L^{2}_{w}(X)}\ls [w]_{A_2}^{1\over2}\|S_{\mathscr{D}} f\|_{L^{2}_{w}(X)},
\end{align*}
where the implicit constant is independent of $f$ and $w$.

Then following the argument in the proof of Corollary 3.2 in \cite{PP}, we obtain that
\begin{align*}
\|S_{\mathscr{D}} f\|_{L^{2}_{w}(X)}
 &\lesssim [w]_{A_{2}} \|f\|_{L^{2}_{w}(X)}.
\end{align*}
Next, by the sharp form of Rubio de Francia's extrapolation theorem  (due to Dragi\^cevi\'c, Grafakos, Pereyra and Petermichl \cite{DGPP} in the Euclidean space and due to Anderson and  Dami\'an \cite{AD} on spaces of homogeneous type), this implies the corresponding weighted $L^p$ bound
$$  \|S_\mathscr{D}f\|_{L^p_w(X)}\leq  C_p [w]_{A_p}^{\max\{1, {1\over p-1}\}}\|f\|_{L^p_{w}(X)}.   $$
The proof of Theorem \ref{thm square} is complete.
\end{proof}

\subsection{Weighted Hardy Spaces, Duality and Weak Factorisation}

We now introduce the atoms for the weighted Hardy space.
\begin{defn}\label{def: weighted atom}
Suppose $w\in A_2$. 
A function $a$ is called a $(1,2)$-atom, if
there exists a ball $B\subset X$ such that
\begin{align*}
(1)\ \ {\rm {supp}}(a)\subset B;\quad(2)\ \ \int_B a(x)\, d\mu(x)=0;
\quad(3)\ \ \|a\|_{L^2_{w}(B)}\le \left[w(B)\right]^{-{1\over 2}}.
\end{align*}
\end{defn}

\begin{defn}\label{def: weighted atom Hardy}
Suppose $w\in A_2$. 
A function $f$ is said to belong to the Hardy space $H^{1}_{w,2}(X)$, if
$f=\sum_{j=1}^\infty \lambda_j a_j$
with $\sum_{j=1}^\infty|\lambda_j|<\infty$ and $a_j$ is a $(1,2)$-atom
for each $j$. Moreover, the norm of $f$ on $H^{1}_{w,2}(X)$ is defined by
$\|f\|_{H^{1}_{w,2}(X)}=\inf\big\{\sum_{j=1}^\infty|\lambda_j|\big\},$
where the infimum is taken over all possible decompositions of $f$ as above.

\end{defn}

We then have the duality between the weighted Hardy space and weighted BMO.
We point out that for the sake of simplicity,  we obtain this result for $p=2$.  For the Euclidean version of duality of weighted Hardy and BMO spaces, we refer to
\cite[Section 4]{DHLWY} for the full range of $p\in[1,2]$.
\begin{thm}\label{HardyBMODuality}
Suppose $w\in A_2$. 
Then we have
$  \big(H^{1}_{w,2}(X)\big)' =  {\rm BMO}_{w}(X). $
\end{thm}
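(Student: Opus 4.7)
The plan is to follow the classical Fefferman--Stein atomic argument, paying attention to the extra weight. I will establish the two inclusions separately.

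\emph{Inclusion} ${\rm BMO}_w(X) \hookrightarrow (H^1_{w,2}(X))'$. Given $b\in{\rm BMO}_w(X)$, I would define the functional $L_b$ first on atoms by $L_b(a):=\int_X a\,b\,d\mu$ and show that
\begin{equation*}
|L_b(a)| \;\le\; C\,\|b\|_{{\rm BMO}_w(X)}
\end{equation*}
for any $(1,2)$-atom $a$ supported on a ball $B$. Using the cancellation $\int a\,d\mu=0$ and weighted H\"older,
\begin{equation*}
|L_b(a)|=\Big|\int_B a\,(b-b_B)\,d\mu\Big| \le \|a\|_{L^2_w(B)}\,\|b-b_B\|_{L^2_{w^{-1}}(B)} \le w(B)^{-1/2}\,\|b-b_B\|_{L^2_{w^{-1}}(B)}.
\end{equation*}
The key analytic input is the weighted John--Nirenberg inequality for $A_2$ weights on spaces of homogeneous type, which gives
\begin{equation*}
\Big(\frac{1}{w(B)}\int_B|b-b_B|^2\,w^{-1}\,d\mu\Big)^{1/2} \le C\,\|b\|_{{\rm BMO}_w(X)}.
\end{equation*}
Combining yields $|L_b(a)|\lesssim \|b\|_{{\rm BMO}_w(X)}$. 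One then extends $L_b$ to a bounded functional on $H^1_{w,2}(X)$ by the standard density argument, checking independence of the atomic decomposition.

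\emph{Inclusion} $(H^1_{w,2}(X))' \hookrightarrow {\rm BMO}_w(X)$. Fix $L\in(H^1_{w,2}(X))'$. For a ball $B$, let $L^2_{0,w}(B)$ be the closed subspace of $L^2_w(B)$ of functions with $\int f\,d\mu=0$. If $f\in L^2_{0,w}(B)$ with $f\not\equiv 0$, then $a:=f/(w(B)^{1/2}\|f\|_{L^2_w(B)})$ is a $(1,2)$-atom, so
\begin{equation*}
|L(f)| \le \|L\|\,w(B)^{1/2}\,\|f\|_{L^2_w(B)}.
\end{equation*}
By the Riesz representation theorem applied to $L^2_w(B)$ together with the identification between its dual and $L^2_{w^{-1}}(B)$ via the unweighted pairing $\int fg\,d\mu$, there exists $\beta_B\in L^2_{w^{-1}}(B)$, unique modulo constants, with $L(f)=\int f\,\beta_B\,d\mu$ for all $f\in L^2_{0,w}(B)$ and $\|\beta_B\|_{L^2_{w^{-1}}(B)}\le C\|L\|\,w(B)^{1/2}$. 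I would then patch the $\beta_B$'s along an exhausting sequence of balls $B_k\uparrow X$ (using that on $B_k\subset B_{k+1}$ the two representatives differ by a constant) to obtain a globally defined function $b\in L^1_{\rm loc}(X)$ with $L(f)=\int f\,b\,d\mu$ for every $f$ supported on some $B_k$ with vanishing integral.

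To verify $b\in{\rm BMO}_w(X)$, fix a ball $B$ and note that $b-b_B$ agrees on $B$ with $\beta_B-(\beta_B)_{\rm avg}$, where $(\beta_B)_{\rm avg}$ denotes the unweighted average over $B$. By H\"older,
\begin{equation*}
\int_B|b-b_B|\,d\mu \le \|\beta_B-(\beta_B)_{\rm avg}\|_{L^2_{w^{-1}}(B)}\,w(B)^{1/2}.
\end{equation*}
A short $A_2$ computation (bounding the mean via $(\mu(B))^{-1}\int_B|\beta_B|\,d\mu \le \|\beta_B\|_{L^2_{w^{-1}}(B)}w(B)^{1/2}/\mu(B)$ and using $[w]_{A_2}$) shows $\|\beta_B-(\beta_B)_{\rm avg}\|_{L^2_{w^{-1}}(B)}\le C([w]_{A_2})\,\|\beta_B\|_{L^2_{w^{-1}}(B)}$, whence
\begin{equation*}
\frac{1}{w(B)}\int_B|b-b_B|\,d\mu \le C([w]_{A_2})\,\|L\|,
\end{equation*}
giving $\|b\|_{{\rm BMO}_w(X)}\lesssim\|L\|$.

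The main obstacle is the weighted John--Nirenberg-type inequality invoked in the first inclusion; it is the essential input that bridges the $L^1$-type BMO norm (as in Definition \ref{MWbmo}) and the $L^2_{w^{-1}}$ oscillation needed to dualize against $L^2_w$-atoms. In the Euclidean setting this is due to Muckenhoupt--Wheeden; on $(X,d,\mu)$ one reruns the Calder\'on--Zygmund stopping-time proof on the dyadic grids of Theorem \ref{theorem dyadic cubes}, using the $A_2$ reverse-H\"older inequality \eqref{e-reverse holder} to pass between $w\,d\mu$ and $w^{-1}\,d\mu$. Everything else is standard atomic machinery adapted to spaces of homogeneous type.
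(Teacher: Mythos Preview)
Your proposal is correct and follows essentially the same route as the paper: atomic duality with the weighted John--Nirenberg inequality for ${\rm BMO}_w(X) \hookrightarrow (H^1_{w,2}(X))'$, and Riesz representation on $L^2_{0,w}(B)$ plus patching over an exhausting sequence of balls for the converse. The paper verifies the ${\rm BMO}_w$ bound on the representing function via the dual characterization of the $L^2_{w^{-1}}$ norm rather than your direct H\"older-plus-$A_2$ computation, but this is a cosmetic difference.
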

\begin{proof}
To prove $ {\rm BMO}_{w}(X) \subset \big(H^{1}_{w,p}(X)\big)',$
for any $g\in {\rm BMO}_{w}(X)$,
let
$$\ell_g(a)=\int_{X} a(x)g(x)d\mu(x),$$
where  $a$ is an   atom as in Definition \ref{def: weighted atom}.

Assume that $a$ is supported in a ball $B\subset X$.
Then by H\"older's inequality and $w\in A_2$, we see that
\begin{align*}
\left|\int_{X} g(x)a(x)\,d\mu(x)\right|
&=\left|\int_B [g(x)-g_B]a(x)\,d\mu(x)\right|\\
&\le \left[\int_B |g(x)-g_B|^2w^{-1}(x)\,d\mu(x)\right]^{\frac12}\left[\int_B |a(x)|^2w(x)\,d\mu(x)\right]^{\frac12}\\
&\le \left[\frac1{w(B)}\int_B |g(x)-g_B|^2w^{-1}(x)\,d\mu(x)\right]^{\frac12}\\
&\le C\|g\|_{{\rm BMO}_{w}(X)}.
\end{align*}
Thus $\ell_g$ can be extended to a bounded linear functional on $H^{1}_{w,2}(X)$.

Conversely, assume that $\ell\in  \big(H^{1}_{w,p}(X)\big)' $.
For any ball $B\subset X$, let
$$L^2_{0,\,w}(B)=\bigg\{f\in L^2_{w}(B): {\rm supp}(f)\subset B,\,\,\int_B f(x)\,d\mu(x)=0\bigg\}.$$
Then we see that for any $f\in L^2_{0,\,w}(B)$, $a:=\frac1{[w(B)]^\frac12\|f\|_{L^2_{w}(B)}}f$
is an atom  as in Definition \ref{def: weighted atom}.
This implies that
$$|\ell(a)|\le \|\ell\|\|a\|_{H^{1}_{w,2}(X)}\le \|\ell\|.$$
Moreover, we see that
$$|\ell(f)|\le \|\ell\|[w(B)]^\frac12\|f\|_{L^2_{w}(B)}.$$
From the Riesz Representation theorem, there exists $[\varphi]\in [L^2_{0,\,w}(B)]^\ast=L^2_{w^{-1}}(B)/\mathbb C$,
and $\varphi\in [\varphi]$, such that for any $f\in L^2_{0,\,w}(B)$,
$$\ell(f)=\int_B f(x)\varphi(x)\,d\mu(x)$$
and
$$\|[\varphi]\|=\inf_c\|\varphi+c\|_{L^2_{w^{-1}}(B)}\le \|\ell\|[w(B)]^\frac12.$$

Now for a fixed ball $B$, we define $B_j=2^jB$, $j\in\mathbb N$. And for $B_0$, we mean the ball $B$ itself. Then we have that for all $f\in L^2_{0,\,w}(B)$
and $j\in\mathbb N$,
$$\int_B f(x)\varphi_{B}(x)\,d\mu(x)=\int_B f(x)\varphi_{B_j}(x)\,d\mu(x).$$
It follows that for almost every $x\in B$, $\varphi_{B_j}(x)-\varphi_{B_0}(x)=C_j$
for some constant $C_j$. From this we further deduce that for all $j,\,l\in\mathbb N$,
$j\le l$ and almost every $x\in B_j$,
$$\varphi_{B_j}(x)-C_j=\varphi_{B_0}(x)=\varphi_{B_l}(x)-C_l.$$
Define
$\varphi(x)=\varphi_j(x)-C_j$
on $B_j$ for $j\in\mathbb N$. Then $\varphi$ is well defined. Moreover,
since $X=\cup_j B_j$,
by H\"older's inequality and $w\in A_p$,
we see that for any $c$ and any ball $B\subset X$,
\begin{align*}
&\left[\int_B |\varphi(x)-\varphi_B|^2w^{-1}(x)\,d\mu(x)\right]^{\frac12}=
\sup_{\|f\|_{L^2_{w}(B)}\le1}\left|\langle f, \varphi-\varphi_B\rangle\right|\\
&\quad=
\sup_{\|f\|_{L^2_{w}(B)}\le1}\left|\int_B f(x)[\varphi(x)-\varphi_B]\,d\mu(x)\right|\\
&\quad=\sup_{\|f\|_{L^2_{w}(B)}\le1}\left|\int_B [f(x)-f_B][\varphi(x)+c]\,d\mu(x)\right|\\
&\quad\le\sup_{\|f\|_{L^2_{w}(B)}\le1}\left[\|f\|_{L^2_{w}(B)}
+|f_B|[w(B)]^\frac12\right]\|[\varphi(x)+c]\chi_B\|_{L^2_{w^{-1}}(B)}\\
&\quad\le \|[\varphi(x)+c]\chi_B\|_{L^2_{w^{-1}}(B)}.
\end{align*}
Taking the infimum over $c$, we have that
$\varphi\in {\rm BMO}_{w}(X)$ and
$\|\varphi\|_{{\rm BMO}_{w}(X)}\le C \|\ell\|$.
\end{proof}

We now provide a sketch of the proof of Theorem \ref{thm5},
the details are similar to the weak factorisation result obtained in \cite{crw}.

\begin{proof}[Proof of Theorem \ref{thm5}]

Similar to \cite{crw} (see also Corollary 1.4 in \cite{HLW} and its proof), as a consequence of  the duality of weighted Hardy space $H^1_\nu(X)$ and ${\rm BMO}_{\nu}(X)$
  (Theorem \ref{HardyBMODuality} above), we see that if $f$ is of the form \eqref{represent of H1}, then $f$ is in $H^1_\nu(X)$ with the $H^1_\nu(X)$-norm control by the right-hand side of \eqref{H1norm}.
Conversely, 
based the characterisation of ${\rm BMO}_\nu(X)$ via the commutator $[b,T]$ as in Theorem \ref{thm2} (the case of $m=1$) and the linear functional analysis argument as in \cite{CLMS}, we get that every 
$f\in H^1_\nu(X)$ admits an factorisation as in \eqref{represent of H1}. Hence, 
 we obtain that Theorem \ref{thm5} holds.
\end{proof}

\section{Applications}
\label{s:Application}

The aim of this section is to show that Theorem \ref{thm3} holds for each of
the six operators listed in the introduction.

%
%
%

\subsection{Cauchy's Integral Operator}

Let $A(x)$ be a Lipschitz function on $\mathbb R$.
Consider the Lipschitz curve as $z= x+iA(x)$, $x\in(-\infty,\infty)$. 
Recall that the Cauchy integral adapted to this Lipschitz curve is:
\begin{align*}
C_{A}(f)(x)&= p.v. {1\over \pi} \int_{-\infty}^\infty {f(y)dy\over (x-y) + i(A(x)-A(y))}.
\end{align*}

The (unweighted version) commutator result was obtained in \cite{LTWW}. Here we point out that the two weight commutator and high order commutator results also hold for Cauchy's Integral Operator.
\begin{prop}
Theorem \ref{thm3} holds for the Cauchy integral operator $C_{A}$ with the underlying setting $(\R,|\cdot|,dx)$.
\end{prop}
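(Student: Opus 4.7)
The plan is to reduce the proposition to a direct verification that $C_A$ fits into the framework of Definitions \ref{def 1} and satisfies the non-degenerate lower bound \eqref{e-assump cz ker low bdd weak} on the underlying space of homogeneous type $(\mathbb{R},|\cdot|,dx)$; once these two checks are done, Theorems \ref{thm1}, \ref{thm2}, and \ref{thm5} apply verbatim and give \eqref{equiv} together with the weak factorisation of $H^1_\nu(\mathbb{R})$. The kernel is $K(x,y)=\frac{1}{\pi}\frac{1}{(x-y)+i(A(x)-A(y))}$, and $L^2(\mathbb{R})$-boundedness is the classical Coifman--McIntosh--Meyer theorem, so I need only verify the pointwise size, the smoothness with a Dini modulus, and the non-degeneracy.

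First, since $|(x-y)+i(A(x)-A(y))|\geq |x-y|$, the size bound $|K(x,y)|\leq C/|x-y|=C/V(x,y)$ is immediate. For the smoothness, I would apply the mean value theorem to $z\mapsto 1/z$: if $|x-x'|\leq |x-y|/2$, writing $z=(x-y)+i(A(x)-A(y))$ and $z'=(x'-y)+i(A(x')-A(y))$, one has $|z-z'|\leq (1+\|A'\|_\infty)|x-x'|$ and $|z|,|z'|\gtrsim |x-y|$, yielding
\[
|K(x,y)-K(x',y)|\,+\,|K(y,x)-K(y,x')|\;\lesssim\; \frac{1}{|x-y|}\cdot\frac{|x-x'|}{|x-y|},
\]
so \eqref{smooth of C-Z-S-I-O} holds with $\omega(t)=t$, which obviously satisfies the Dini condition.

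Second, for the non-degeneracy \eqref{e-assump cz ker low bdd weak}, I would take $\overline C=2$ and, given $x\in\mathbb{R}$ and $r>0$, simply choose $y:=x+r$ (or $x-r$). Then $y\in B(x,2r)\setminus B(x,r)$ and
\[
|K(x,y)|=\frac{1}{\pi}\cdot\frac{1}{\sqrt{r^2+(A(x+r)-A(x))^2}}\geq \frac{1}{\pi r\sqrt{1+\|A'\|_\infty^2}}\,=\,\frac{1}{c_0\,\mu(B(x,r))},
\]
so \eqref{e-assump cz ker low bdd weak} holds with $c_0:=2\pi\sqrt{1+\|A'\|_\infty^2}$. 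Note that the modulus $\omega(t)=t$ trivially satisfies $\omega(t)\to 0$ as $t\to 0$, so the hypothesis required in Theorem \ref{thm2} is met.

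With all three items (size, smoothness with Dini modulus, and non-degeneracy) in hand, Theorem \ref{thm1} gives the upper bound in \eqref{equiv}, Theorem \ref{thm2} gives the matching lower bound, and Theorem \ref{thm5} yields the weak factorisation of $H^1_\nu(\mathbb{R})$. There is no substantive obstacle: the only input that lies outside routine computation is the $L^2$-boundedness of $C_A$, which I quote from Coifman--McIntosh--Meyer; everything else is an elementary estimate for the explicit kernel $1/((x-y)+i(A(x)-A(y)))$.
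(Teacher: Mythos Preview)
Your proposal is correct and follows essentially the same route as the paper: verify size, Lipschitz smoothness, and the pointwise kernel lower bound for the explicit kernel $K(x,y)=\frac{1}{\pi}\frac{1}{(x-y)+i(A(x)-A(y))}$, then invoke Theorems~\ref{thm1}, \ref{thm2}, and~\ref{thm5}. The only cosmetic differences are that the paper chooses $y_0=x_0+4r$ and bounds the real part of $K$ (whereas you take $y=x+r$ and bound $|K|$ directly), and you make the appeal to Coifman--McIntosh--Meyer for $L^2$-boundedness explicit; neither changes the substance.
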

\begin{proof}
To see this, we point out that this operator has the associated kernel
$$C_A(x,y)= {1\over \pi} {1\over (x-y) + i(A(x)-A(y))},$$
which satisfies the size condition
$$ |C_A(x,y)|\leq {1\over |x-y|}  $$
and the smoothness condition
$$ |C_A(x,y)-C_A(x,y')| + |C_A(y,x)-C_A(y',x)|  \leq 2(\|A'\|_\infty+1) {|y-y'|\over |x-y|^2} $$
for every $x,y,y'$
such that $|y-y'| \leq |x-y|/2$.
Moreover, for any interval $I:=I(x_0, r)$, we take  $y_0=x_0+4r$  
Then we see that ${\rm Re}\, C_A(x_0, y_0)$, the real part of $C_A(x_0, y_0)$,
satisfies that 
${\rm Re}\, C_A(x_0, y_0)<0$ and
$$|{\rm Re}\, C_A(x_0, y_0)|={1\over \pi}\frac{y_0-x_0}{(x_0-y_0)^2+(A(x_0)-A(y_0))^2}\gs \frac{y_0-x_0}{(\|A'\|^2_\infty+1)(x_0-y_0)^2}\gs\frac1{|I|}.$$
Therefore, \eqref{e-assump cz ker low bdd weak} holds.
As a consequence of this fact and Theorems \ref{thm1} and \ref{thm2}, we see that 
 Theorem \ref{thm3} holds.
\end{proof}

%
%
%

\subsection{The Cauchy--Szeg\"o Projection Operator on the Heisenberg Group $\mathbb H^n$}

We recall all the related definitions for the Heisenberg group in \cite[Chapter XII]{s93}. Recall that $\mathbb{H}^{n}$ is
the Lie group with underlying manifold $\mathbb{C}^{n} \times \mathbb{R}= \{[z,t]: z=(z_{1},\cdots,z_{n}) \in \mathbb{C}^{n}, t \in \mathbb{R}\}$
and multiplication law
\begin{align*}
[z,t]\circ [z', t']
&: =\Big[z_{1}+z_{1}',\cdots, z_{n}+z_{n}',
t+t'+2 \mbox{Im} \big(\sum_{j=1}^{n}z_{j} \bar{z}'_{j}\big) \Big].
\end{align*}
The identity of $\mathbb{H}^{n}$ is the origin and the inverse is given
by $[z, t]^{-1} = [-z,-t]$.
Hereafter, we  identify $\mathbb{C}^{n}$  with $\mathbb{R}^{2n}$ and  use the following notation to denote the
points  of $\mathbb{C}^{n} \times \mathbb{R} \equiv \mathbb{R}^{2n+1}$:
$g=[z,t] \equiv [x,y,t] = [x_{1},\cdots, x_{n}, y_{1},\cdots, y_{n},t]
$
with $z = [z_{1},\cdots, z_{n}]$, $z_{j} =x_{j}+iy_{j}$ and $x_{j}, y_{j}, t \in \mathbb{R}$ for $j=1,\ldots,n$. Then, the
composition law $\circ$ can be explicitly written as
$g\circ g'=[x,y,t] \circ [x',y',t'] = [x+x', y+y', t+t' + 2\langle y, x'\rangle -2\langle x, y'\rangle],
$ 
where $\langle \cdot, \cdot\rangle$ denotes the usual inner product in $\mathbb{R}^{n}$.


We recall the upper half-space $\mathcal U^n$ and its boundary $b\mathcal U^n$ as follows:
$$\mathcal U^n=\Big\{ z\in \mathbb C^{n+1}: \ {\rm Im}(z_{n+1})>\sum_{j=1}^n |z_j|^2\Big\},\ \ \ b\mathcal U^n=\Big\{ z\in \mathbb C^{n+1}: \ {\rm Im}(z_{n+1})=\sum_{j=1}^n |z_j|^2\Big\}.$$

For any function $F$ defined on $\mathcal U^n$, we write $F_\epsilon$ for its vertical translate:
$F_\epsilon(z)=F(z+\epsilon {\bf i} )$ with ${\bf i}=(0,\ldots,0,i)$.
We also recall the Hardy space $\mathcal H^2(\mathcal U^n)$, which consists of all functions $F$
holomorphic on $\mathcal U^n$ for which
$ \|F\|_{\mathcal H^2(\mathcal U^n)} = \Big( \sup_{\epsilon>0} \int_{b\mathcal U^n} |F_\epsilon(z)|^2 d\beta(z) \Big)^{1\over2}<\infty$,
where $d\beta(z)$ is the surface measure on $b\mathcal U^n$.

The Cauchy--Szeg\"o projection operator $\mathcal C$ is the orthogonal projection from $L^2(b\mathcal U^n)$ to the subspace of functions
$\{F^b\}$ that are boundary values of functions $F\in \mathcal H^2(\mathcal U^n)$. According to \cite[Section 2.3, Section 2.4, Chapter XII]{s93}, we get that for $f\in L^2(\mathbb H^n)$,
$$ \mathcal C (f)(x) = \int_{\mathbb H^n} K(x,y)f(y)dy, $$
where $K(x,y) = K(y^{-1}\circ x) $ for $x\not=y$ and
$$ K(x) = -{\partial\over \partial t} \bigg({c\over n} [t+i|\zeta|^2]^{-n}\bigg)\quad {\rm for}\ \ x=[\zeta,t]\in \mathbb H^n = \mathbb C^n\times\mathbb R, $$
and $c = 2^{n-1} i^{n+1} n! \pi^{-n-1}$.

\begin{prop}
Theorem \ref{thm3} holds for the Cauchy--Szeg\"o projection operator $\mathcal C$  with the underlying setting $(\mathbb H^n,\rho, dx)$,
where $dx$ is the usual Lebesgue measure on $\mathbb C^n\times\R$ and $\rho$ is the norm  on $\mathbb H^n$
 defined by
$\rho(x) := \max\{ |\zeta|, |t|^{1\over2} \} \quad {\rm for}\ \ x=[\zeta,t]\in \mathbb H^n = \mathbb C^n\times\mathbb R.$
\end{prop}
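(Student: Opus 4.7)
The plan is to verify the hypotheses required by Theorems \ref{thm1}, \ref{thm2} and \ref{thm5} in the setting $(\mathbb H^n,\rho,dx)$ for the operator $T=\mathcal C$: namely, (a) that $(\mathbb H^n,\rho,dx)$ is a space of homogeneous type, (b) that the Cauchy--Szeg\"o kernel $K$ satisfies the size and smoothness bounds of Definition \ref{def 1} with a modulus $\omega$ satisfying the Dini condition, and (c) that $K$ satisfies the non-degenerate lower bound \eqref{e-assump cz ker low bdd weak}. Once (a)--(c) are in hand, Theorems \ref{thm1} and \ref{thm2} together give the two-weight norm equivalence \eqref{equiv} for every $m\in\mathbb N$, while Theorem \ref{thm5} supplies the weak factorization of the weighted Hardy space $H^1_\nu(\mathbb H^n)$.

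Item (a) is classical: $\rho$ is a quasi-metric on $\mathbb H^n$ and Lebesgue measure is $Q$-homogeneous with $Q=2n+2$, so $|B(x,r)|\approx r^{Q}$ and the doubling condition holds. For (b) I would exploit the explicit formula
\[
K(x) = c\,[t+i|\zeta|^2]^{-(n+1)}, \qquad x=[\zeta,t]\in \mathbb H^n,
\]
from which $|K(x)| \approx (t^2+|\zeta|^4)^{-(n+1)/2} \approx \rho(x)^{-Q} \approx 1/|B(0,\rho(x))|$. Since $K(x,y)=K(y^{-1}\circ x)$ is left-invariant, this yields the size bound. The smoothness estimate follows by differentiating the explicit formula for $K$ along the left-invariant vector fields of $\mathbb H^n$ and then applying a standard mean-value argument (the relevant estimates are essentially contained in \cite[Chapter XII, Section 2]{s93}); this produces a Lipschitz-type bound in the quasi-metric $\rho$, corresponding to $\omega(t)=t$, which plainly satisfies the Dini condition.

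For (c), given $x\in\mathbb H^n$ and $r>0$, set $y := x\circ [0,-4r^2]$. Then the group law yields $y^{-1}\circ x = [0,4r^2]$, so $\rho(y^{-1}\circ x)=2r$, and hence $y\in B(x,2r)\setminus B(x,r)$; moreover,
\[
|K(y^{-1}\circ x)| \;=\; |c|\,(4r^2)^{-(n+1)} \;\gs\; r^{-Q} \;\approx\; \frac{1}{|B(x,r)|}.
\]
Thus \eqref{e-assump cz ker low bdd weak} holds with $\overline C = 2$ and a universal constant $c_0$.

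The only real obstacle is the careful verification of (b): one has to check that the smoothness of $K$ relative to the quasi-metric $\rho$ does indeed deliver a modulus $\omega$ satisfying the Dini condition, and not merely a continuous $\omega$ with $\omega(0)=0$. This is a routine but slightly tedious left-invariant mean-value computation using the explicit formula for $K$ and the homogeneity of $\rho$ under the natural Heisenberg dilations, and it is essentially contained in the kernel estimates of \cite[Chapter XII]{s93}. Once (a), (b) and (c) are in place, the conclusion of Theorem \ref{thm3} for $\mathcal C$ follows immediately from Theorems \ref{thm1}, \ref{thm2} and \ref{thm5}.
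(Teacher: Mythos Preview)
Your proposal is correct and follows essentially the same route as the paper: both verify the Calder\'on--Zygmund size and smoothness estimates for $K$ by citing \cite[Chapter XII]{s93}, and then check the non-degeneracy condition~\eqref{e-assump cz ker low bdd weak}. The only cosmetic difference is that the paper invokes the two-sided bound $|K(x,y)|\approx\rho(x,y)^{-2n-2}$ from Stein, which makes \eqref{e-assump cz ker low bdd weak} immediate for \emph{any} $y$ in the annulus, whereas you produce an explicit point $y=x\circ[0,-4r^2]$; both arguments are equivalent (though note your $y$ has $\rho(x,y)=2r$, so strictly speaking you need $\overline C>2$, not $\overline C=2$).
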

\begin{proof}

We begin by recalling that with this norm $\rho(x)$ as above, and we set $\rho(x,y):=\rho(y^{-1}\circ x)$. From \cite[Section 2.5, Chapter XII]{s93} we obtain that
the Cauchy--Szeg\"o kernel $K(x,y)$ satisfies the following conditions:
\begin{align*}
 &|K(x,y)|\approx \rho(x,y)^{-2n-2}\\
 &|K(x,y)-K(x,y_0)|\lesssim {\rho(y,y_0)\over \rho(x,y)} {1\over\rho(x,y)^{2n+2}}\quad {\rm whenever\ \ } \rho(x,y)\geq c\rho(y,y_0)\\
 &|K(x,y)-K(x_0,y)|\lesssim {\rho(x,x_0)\over \rho(x,y)} {1\over\rho(x,y)^{2n+2}}\quad {\rm whenever\ \ } \rho(x,y)\geq c\rho(x,x_0).
\end{align*}

Thus, it is straightforward to see that $K(x,y)$ satisfies \eqref{e-assump cz ker low bdd weak}.
Hence, we see that
Theorem \ref{thm3} holds for the Cauchy--Szeg\"o projection operator $\mathcal C$.
\end{proof}

\subsection{The Szeg\"o Projection Operator on a Family of Unbounded Weakly Pseudoconvex
Domains}

We now recall the weakly pseudoconvex domains $ \Omega_k$ and their boundary $\partial \Omega_k$, $k\in \mathbb Z_+$,  from Greiner and Stein \cite{GrSt}:
$$  \Omega_k=\Big\{ (z_1,z_2)\in \mathbb C^2: {\rm Im}(z_2) > |z_1|^{2k}\Big\},\quad     \partial\Omega_k=\Big\{ (z_1,z_2)\in \mathbb C^2: {\rm Im}(z_2) =|z_1|^{2k}\Big\}.$$
Recall that $\partial \Omega_k$ is naturally parameterized by $z_1$ and ${\rm Re} z_2$. We use the following notation. Points in $\partial \Omega_k$ are denoted by $\zeta,\omega,\nu$ etc.
\begin{align*}
&\zeta =(z_1,z_2) \sim (z,t), \quad z=z_1\in \mathbb C,\ t= {\rm Re}(z_2) \in\mathbb R;\\
&\omega =(w_1,w_2) \sim (w,s), \quad w=w_1\in \mathbb C,\ s= {\rm Re}(w_2) \in\mathbb R;\\
&\nu =(u_1,u_2) \sim (u,r), \quad u=u_1\in \mathbb C, r= {\rm Re}(u_2) \in\mathbb R.
\end{align*}

The Szeg\"o projection $S$ on $\Omega_k$ is the orthogonal projection from $L^2(\partial \Omega_k)$ to the Hardy space
$H^2(\Omega_k)$ of holomorphic functions on $\Omega_k$ with $L^2$ boundary values. The Szeg\"o kernel $S(\zeta,\omega)$
is the kernel for which
$$ S(f)(\zeta) = \int_{  \partial \Omega_k } f(\omega) S(\zeta,\omega) dV(\omega), $$
where $dV(\omega) = dV(x, y, s) = dxdyds$ with $\omega=(w_1,s) = (x+iy,s)$, which is Lebesgue measure on the parameter space $\R^3$.
Greiner and Stein \cite{GrSt} have computed the Szeg\"o kernel  with Lebesgue measure on the parameter space with the formula
\begin{align*}
S(\zeta,\omega)&={1\over 4\pi^2}\bigg[   \bigg(   \Big({i\over2}[s-t] + { |z_1|^{2k}+|w_1|^{2k} \over 2} + {\mu+\eta\over 2} \Big)^{1\over k} - z_1\bar w_1 \bigg)^2 \\
&\ \quad \times\bigg(   {i\over2}[s-t]   + { |z_1|^{2k}+|w_1|^{2k} \over 2} + {\mu+\eta\over 2}    \bigg)^{k-1\over k}  \bigg]^{-1},
\end{align*}
where $\mu= {\rm Im}(z_2)-|z_1|^{2k}$ and $\eta= {\rm Im}(w_2)-|w_1|^{2k}$.

In \cite{Diaz}, Diaz defined and analyzed a pseudometric $d(\zeta,\omega)$ globally suited to the complex geometry of $\partial \Omega_k$, which
was arrived at by the study of the Szeg\"o kernel. This allows the treatment of the Szeg\"o kernel as a singular integral kernel:
$$ d(\zeta,\omega) = \bigg|  \bigg({i\over2}[s-t] + { |z_1|^{2k}+|w_1|^{2k} \over 2}\bigg)^{1\over k}   - z\bar w  \bigg|^{1\over2}. $$

Then the pseudometric balls are defined as
$  B_\zeta(\delta)= B_\zeta^d(\delta) = \{ \omega\in\partial\Omega_k:\  d(\zeta,\omega)<\delta\}  $
and the volume of the balls is
$$ V(B_\zeta(\delta)) = 4\pi\delta^2 \bigg(    {  (\sin(\pi/ k))^{2k-2} \over4} |z|^{2k-2} \delta^2 + {1\over2} \delta^{2k} \bigg), $$
and it is shown that this measure is doubling.

\begin{prop}
Theorem \ref{thm3} holds for the Szeg\"o projection $S$  with the underlying space of homogenous type $(\partial\Omega_k,d, dV)$,
where $d$ and $dV$ are as introduced above.
\end{prop}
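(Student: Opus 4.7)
The plan is to invoke Theorems \ref{thm1}, \ref{thm2} and \ref{thm5} for $S$ on $(\partial\Omega_k, d, dV)$. To do so I need three ingredients:
\begin{itemize}
\item[{\rm (i)}] $(\partial\Omega_k, d, dV)$ is a space of homogeneous type;
\item[{\rm (ii)}] $S$ is a Calder\'on--Zygmund operator as in Definition \ref{def 1} with modulus satisfying the Dini condition;
\item[{\rm (iii)}] the Szeg\"o kernel satisfies the non-degeneracy condition \eqref{e-assump cz ker low bdd weak}.
\end{itemize}

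Item (i) is essentially in Diaz \cite{Diaz}: the quasi-metric property of $d$ and the doubling of $dV$ follow from the explicit volume formula stated above. For item (ii), $L^2$-boundedness is automatic because $S$ is the orthogonal projection onto $H^2(\Omega_k)$. For the size and smoothness I would work directly from the Greiner--Stein formula. Writing
\[
A = A(\zeta,\omega) := \tfrac{i}{2}(s-t) + \tfrac{1}{2}\bigl(|z|^{2k}+|w|^{2k}\bigr)
\]
(which on $\partial\Omega_k$ coincides with the bracketed quantity in the kernel since $\mu=\eta=0$), one has $|A^{1/k} - z\bar{w}| = d(\zeta,\omega)^2$, so $|S(\zeta,\omega)| \approx d(\zeta,\omega)^{-4}|A|^{-(k-1)/k}$. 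Combining this with the elementary bound $|A| \gtrsim |z|^{2k} + d(\zeta,\omega)^{2k}$ and Diaz's volume formula $V(B_\zeta(\delta)) \approx \delta^4(|z|^{2k-2}+\delta^{2k-2})$ then yields the Calder\'on--Zygmund size estimate. The smoothness, with a H\"older-type (hence Dini) modulus, will come from differentiating the kernel, which is real-analytic off the diagonal; using the chain rule through $A$ and the size estimate, one picks up an extra factor of $d(\zeta,\omega')/d(\zeta,\omega)$ or $d(\omega,\omega')/d(\zeta,\omega)$ exactly as needed.

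The hard part will be item (iii). Given $\zeta=(z,t)$ and $r>0$, I need to produce $\omega=(w,s)\in\partial\Omega_k$ with $r\le d(\zeta,\omega)\le \overline{C}r$ and
\[
|S(\zeta,\omega)| \gtrsim \frac{1}{V(B_\zeta(r))} \approx \frac{1}{r^4\bigl(|z|^{2k-2}+r^{2k-2}\bigr)}.
\]
In view of $|S| \approx d^{-4}|A|^{-(k-1)/k}$, the task is to put $\omega$ on the annulus $d(\zeta,\omega)\sim r$ while keeping $|A|^{(k-1)/k} \lesssim |z|^{2k-2}+r^{2k-2}$. Because $d$ scales non-isotropically depending on whether $|z|$ or $r$ dominates, the construction splits into two regimes.
\begin{itemize}
\item[$\bullet$] If $r\le |z|$, choose $w$ with $|w-z|\sim r$ and take $s=t$, so ${\rm Im}(A)=0$ and $|A|\sim |z|^{2k}$. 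A Taylor expansion of $|w|^{2k}$ around $|z|^{2k}$, combined with a careful choice of the argument of $w-z$ (so that ${\rm Im}(z\bar w)$ stays of order $r^2$ rather than $r|z|$), will give $d(\zeta,\omega)\sim r$ and $|S(\zeta,\omega)|\sim 1/(r^4|z|^{2k-2})$, matching the dominant term of $V(B_\zeta(r))$.
\item[$\bullet$] If $r>|z|$, choose $|w|\sim r$ and align $z\bar w$ opposite to $A^{1/k}$ so that $|A^{1/k}-z\bar w|\sim r^2$, and take $|s-t|\sim r^{2k}$; then $|A|\sim r^{2k}$, $d(\zeta,\omega)\sim r$, and $|S(\zeta,\omega)|\sim 1/r^{2k+2}$, again matching the dominant term of $V(B_\zeta(r))$.
\end{itemize}
The only conceptual work is setting up this regime-based construction; verifying each case and ruling out cancellations in $A^{1/k}-z\bar w$ is a finite computation with the explicit formulas of Greiner--Stein and Diaz. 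Once (i)--(iii) are established, Theorems \ref{thm1} and \ref{thm2} yield the norm equivalence \eqref{equiv}, and Theorem \ref{thm5} gives the corresponding weak factorisation of $H^1_\nu(\partial\Omega_k)$, completing the proof.
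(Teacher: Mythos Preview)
Your plan is workable but substantially longer than necessary, because you are re-deriving from the Greiner--Stein formula results that Diaz \cite{Diaz} has already established in exactly the form needed. The paper's proof simply cites Diaz for the full Calder\'on--Zygmund package: the size estimate there is in fact the \emph{two-sided} bound
\[
|S(\zeta,\omega)| \approx \frac{1}{V\bigl(B_\zeta(d(\zeta,\omega))\bigr)},
\]
together with H\"older-type smoothness (hence Dini) in each variable. With the two-sided size bound in hand, the non-degeneracy condition \eqref{e-assump cz ker low bdd weak} is immediate: for any $\zeta$ and $r>0$, take any $\omega$ with $d(\zeta,\omega)\sim r$ and read off $|S(\zeta,\omega)|\gtrsim 1/V(B_\zeta(r))$. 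No regime analysis is needed.

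Your regime-based construction in (iii) is essentially a hands-on verification of the lower half of Diaz's size estimate, and your direct differentiation in (ii) re-proves his smoothness bounds. So the approaches agree in substance; the difference is that the paper outsources both steps to \cite{Diaz}, reducing the proof to a citation, while you propose to carry out the underlying computations explicitly. If you wish to keep your version self-contained, be aware that a couple of your intermediate claims (e.g.\ $|A|\gtrsim |z|^{2k}+d(\zeta,\omega)^{2k}$, and the control of ${\rm Im}(z\bar w)$ in the $r\le|z|$ case) require some care and are not as automatic as the sketch suggests---but these are precisely the computations Diaz already did.
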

\begin{proof}
We point out that
it is proved in \cite{Diaz} that $S(\zeta,\omega)$ satisfies the following size and smoothness conditions:
\begin{align}
|S(\zeta,\omega)|&\approx {1\over V(B_\zeta( d(\zeta,\omega)))   }, \label{size of S in Omegak}\\
|S(\zeta,\omega) -S(\zeta',\omega)|&\lesssim  \bigg( {d(\zeta,\zeta') \over d(\zeta,\omega)} \bigg) {1\over V(B_\zeta( d(\zeta,\omega)))   },
\quad {\rm for\ \ }  cd(\zeta,\zeta') \leq d(\zeta,\omega), \nonumber\\  
|S(\zeta,\omega) -S(\zeta,\omega')|&\lesssim  \bigg( {d(\omega,\omega') \over d(\zeta,\omega)} \bigg) {1\over V(B_\zeta( d(\zeta,\omega)))   },
\quad {\rm for\ \ }  cd(\omega,\omega') \leq d(\zeta,\omega).\nonumber 
\end{align}

Thus, from \eqref{size of S in Omegak} it is direct to see that $S(\zeta,\omega)$ satisfies \eqref{e-assump cz ker low bdd weak}.
Hence, we see that
Theorem \ref{thm3} holds for the Szeg\"o projection operator $S$ on $\Omega_k$ for $k\in\Z_+$.
\end{proof}

\subsection{Riesz Transforms Associated with sub-Laplacian on Stratified Nilpotent Lie Groups}

Recall that a connected, simply connected nilpotent Lie group $\mathcal{G}$ is said to be stratified if its left-invariant Lie algebra $\mathfrak{g}$ (assumed real and of finite dimension) admits a direct sum decomposition
\begin{align*}
\mathfrak{g} = \bigoplus_{i = 1}^k V_i \  \mbox{where $[V_1, V_i] = V_{i + 1}$ for $i \leq k - 1$.}
\end{align*}
One identifies $\mathfrak{g}$ and $\mathcal{G}$ via the exponential map
$
\exp: \mathfrak{g} \longrightarrow \mathcal{G},
$
which is a diffeomorphism.
We fix once and for all a (bi-invariant) Haar measure $dg$ on $\mathcal G$ (which is just the lift of Lebesgue measure on $\frak g$ via $\exp$).
There is a natural family of dilations on $\frak g$ defined for $r > 0$ as follows:
\begin{align*}
\delta_r \left( \sum_{i = 1}^k v_i \right) = \sum_{i = 1}^k r^i v_i, \quad \mbox{with $v_i \in V_i$}.
\end{align*}
This allows the definition of dilation on $\mathcal{G}$, which we still denote by $\delta_r$.
We choose once and for all a basis $\{\X_1, \cdots, \X_n\}$ for $V_1$ and consider the sub-Laplacian $\Delta = \sum_{j=1 }^n \X_j^2 $. Observe that $\X_j$ ($1 \leq j \leq n$) is homogeneous of degree $1$ and $\Delta$ of degree $2$ with respect to the dilations in the sense that:
$
\X_j \left( f \circ \delta_r \right) = r \, \left( \X_j f \right) \circ \delta_r, \  1 \leq j \leq  n, \  r > 0, \ f \in C^1$ and that
$\delta_{\frac{1}{r}} \circ \Delta \circ \delta_r = r^2 \, \Delta, \quad \forall r > 0.
$

Let $Q$ denote the homogeneous dimension of $\mathcal{G}$, namely,
$Q= \sum_{i=1}^k i\, {\rm dim} V_i.
$
And let $p_h$ ($h > 0$)  be the heat kernel (that is, the integral kernel of $e^{h \Delta}$)
on $\mathcal G$. For convenience, we set $p_h(g) = p_h(g, o)$ (that is, in this article, for a convolution operator, we will identify the integral kernel with the convolution kernel) and $p(g) = p_1(g)$.

Recall that (c.f. for example \cite{FoSt})
$
p_h(g) = h^{-\frac{Q}{2}} p(\delta_{\frac{1}{\sqrt{h}}}(g)), \  \forall h > 0, \  g \in \mathcal G.
$
The kernel of the $j^{\mathrm{th}}$  Riesz transform $\X_j (-\Delta)^{-\frac{1}{2}}$ ($1 \leq j \leq  n$) is written simply as $K_j(g, g') = K_j(g'^{-1} \circ g)$, where
\begin{align*}
K_j(g) = \frac{1}{\sqrt{\pi}} \int_0^{+\infty} h^{-\frac{1}{2}} \X_j p_h(g) \, dh  = \frac{1}{\sqrt{\pi}} \int_0^{+\infty} h^{- \frac{Q}{2} - 1} \left( \X_j p \right)(\delta_{\frac{1}{\sqrt{h}}}(g)) \, dh.
\end{align*}

\begin{prop}
Theorem \ref{thm3} holds for the Riesz transform $\X_j (-\Delta)^{-\frac{1}{2}}$ ($1 \leq j \leq  n$)  with the underlying setting $(\mathcal G,\rho, dg)$, where $\rho$ is the homogeneous norm on $\mathcal G$ (see \cite{DLLW}).
\end{prop}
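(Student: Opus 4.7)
The plan is to reduce the statement to our general framework by verifying that $\X_j(-\Delta)^{-1/2}$ fits into Definition \ref{def 1} on the space of homogeneous type $(\mathcal G,\rho,dg)$ and satisfies the non-degenerate condition \eqref{e-assump cz ker low bdd weak}. Once both are in place, Theorems \ref{thm1} and \ref{thm2} together yield the equivalence \eqref{equiv} for all $m\in\mathbb N$, and Theorem \ref{thm5} (applied with $m=1$) gives the weak factorisation of $H^1_\nu(\mathcal G)$.

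First, I would recall that $(\mathcal G,\rho,dg)$ is a space of homogeneous type: the homogeneous norm $\rho$ satisfies a quasi-triangle inequality compatible with the dilations $\delta_r$, and the Haar measure is $Q$-Ahlfors regular, $|B(g,r)|\approx r^Q$, so that $V(g,g')\approx \rho(g^{-1}\circ g')^Q$. The $L^2$-boundedness of $\X_j(-\Delta)^{-1/2}$ is classical on stratified groups (see Folland--Stein). For the kernel bounds, I would use the representation
$$
K_j(g) = \frac{1}{\sqrt{\pi}} \int_0^{+\infty} h^{-\frac{Q}{2} - 1}\left(\X_j p\right)(\delta_{1/\sqrt{h}}(g))\, dh
$$
together with the well-known Gaussian-type estimates on $p$ and all of its left-invariant derivatives on $\mathcal G$ (which follow from the homogeneity of $\Delta$ and subelliptic estimates). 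Standard scaling then gives the pointwise size $|K_j(g,g')|\lesssim 1/V(g,g')$ and the H\"older regularity
$$
|K_j(g,g')-K_j(g,g'')|+|K_j(g',g)-K_j(g'',g)| \lesssim \frac{1}{V(g,g')}\left(\frac{\rho(g'^{-1}\circ g'')}{\rho(g^{-1}\circ g')}\right)^{\!\alpha}
$$
for some $\alpha\in(0,1]$ whenever $\rho(g'^{-1}\circ g'')\le (2A_0)^{-1}\rho(g^{-1}\circ g')$. Thus Definition \ref{def 1} is satisfied with $\omega(t)=t^\alpha$, which clearly meets the Dini condition.

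Second, I would verify \eqref{e-assump cz ker low bdd weak}. This is the main technical input and it is exactly the pointwise kernel lower bound for the Riesz transform established in \cite{DLLW}: for every $g\in\mathcal G$ and $r>0$ there exists $g'\in \mathcal G$ with $\rho(g^{-1}\circ g')\approx r$ and
$$
|K_j(g,g')|\gtrsim \frac{1}{r^Q}\approx \frac{1}{|B(g,r)|}.
$$
The proof in \cite{DLLW} proceeds by selecting a direction in which $\X_j p$ is quantitatively non-zero at a suitable base point, then dilating and translating; the main difficulty (already handled there) is that $\mathcal G$ has no Fourier analysis available and that the Riesz kernel is not homogeneous in the Euclidean sense. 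I would simply quote this result and observe that it is precisely \eqref{e-assump cz ker low bdd weak}.

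With these two verifications in hand, Theorem \ref{thm1} yields the upper bound $\|T_b^m:L^p_{\lambda_1}\to L^p_{\lambda_2}\|\lesssim \|b\|_{{\rm BMO}_{\nu^{1/m}}(\mathcal G)}^m$ for every $m\in\mathbb N$, and Theorem \ref{thm2} yields the matching lower bound, giving \eqref{equiv}. The weak factorisation for $H^1_\nu(\mathcal G)$ then follows from Theorem \ref{thm5} applied to this operator. The main genuinely new content beyond \cite{DLLWW} is the passage to arbitrary $m\ge 1$, which is already absorbed into our general Theorems \ref{thm1} and \ref{thm2}, so no further work is required once the kernel conditions are checked.
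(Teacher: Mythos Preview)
Your proposal is correct and follows essentially the same approach as the paper: verify that $\X_j(-\Delta)^{-1/2}$ is a Calder\'on--Zygmund operator on $(\mathcal G,\rho,dg)$ and then quote the pointwise kernel lower bound from \cite{DLLW} to obtain \eqref{e-assump cz ker low bdd weak}, after which Theorems \ref{thm1}, \ref{thm2}, and \ref{thm5} apply. The paper's own proof is shorter only because it takes the Calder\'on--Zygmund property for granted and records the \cite{DLLW} lower bound verbatim; your more detailed discussion of the size and smoothness estimates is a welcome addition but not a different route.
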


\begin{proof}

For the Riesz transform kernel, we have the following lower bound estimate, obtained in \cite{DLLW}:

 {\it
Fix $j=1,\ldots,n$. There exist $ 0<\varepsilon_o\ll1$ and $C>0$ such that  for any $0 <\eta< \varepsilon_o$ and for all  $g \in \mathcal G$ and $r > 0$, we can find $g_* = g_*(j, g,r) \in \mathcal G$ satisfying
\begin{align}
\rho(g, g_*) = r, \quad |K_j(g_1, g_2)| \geq C r^{- Q}, \quad \forall g_1 \in B(g, \eta r), \  g_2 \in B(g_*, \eta r)
\end{align}
and all $K_j(g_1,g_2)$  have the same sign.}
\medskip

From this kernel lower bound estimate, it is direct to see that
for each $j$, $K_j(g_1, g_2)$ satisfies \eqref{e-assump cz ker low bdd weak}. Hence,  Theorem \ref{thm3} holds for $\X_j (-\Delta)^{-\frac{1}{2}}$ ($1 \leq j \leq  n$).
\end{proof}

\subsection{Riesz Transform Associated with the Bessel Operator on $\R_+$}

Consider $\R_+=(0,\infty)$. For $\lambda> -{1\over2}$, the Bessel operator $\Delta_\lambda$ on $\mathbb R_+$ (\cite{MS}) is defined by
\begin{align*}
\Delta_\lambda = -{d^2\over dx^2} -{2\lambda\over x} {d\over dx}.
\end{align*}
It is a formally self-adjoint operator in $L^2(\mathbb R_+, dm_\lz)$, where $d m_\lz(x)=x^{2\lambda}dx$.
For any $x\in \mathbb{R}_+$ and $r>0$, let $I(x, r)=(x-r, x+r)\cap \mathbb{R}_+$. Moreover, we assume that $r\le x$
without loss of generality. Observe that for any $x\in \mathbb{R}_+$ and $r\in(0, x]$,
$m_\lambda(I(x,r))\sim x^{2\lambda} r.$
Thus, $(\mathbb R_+, |\cdot|, dm_\lambda)$ is a space of homogeneous type.

The Bessel Riesz transform is defined as
$ R_\lambda = {d\over dx} (\Delta_\lambda)^{-{1\over2}}.$
In \cite{MS}, Muckenhoupt--Stein introduced and obtained the $L^p(\mathbb R_+, dm_\lambda)$-boundedness of
$R_\lambda$ for $\lambda\in(0, \infty)$. Under this condition, the unweighted version of commutator theorem for $R_\lambda$
was obtained in \cite{DLWY} via weak factorisation.
However, the two weight commutator and high order commutator are unknown, and the case when $\lambda\in(-1/2,0)$ is totally unknown.
Here we will establish the two weight commutator and high order commutator for $R_\lambda$ for all $\lambda\in(-1/2,\infty)$.

\begin{prop}
Theorem \ref{thm3} holds for the Bessel Riesz transform $R_\lambda$   with the underlying setting $(\mathbb R_+, |\cdot|, dm_\lambda)$.
\end{prop}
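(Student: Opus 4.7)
The plan is to apply Theorems \ref{thm1} and \ref{thm2} to $R_\lambda$, which reduces to verifying: (i) that $R_\lambda$ is a Calder\'on--Zygmund operator on the space of homogeneous type $(\mathbb R_+, |\cdot|, dm_\lambda)$ in the sense of Definition \ref{def 1}, and (ii) that its integral kernel $R_\lambda(x,y)$ satisfies the non-degenerate pointwise lower bound \eqref{e-assump cz ker low bdd weak}. For (i), when $\lambda > 0$ the $L^2(\mathbb R_+, dm_\lambda)$-boundedness and the size/smoothness of the kernel are essentially in Muckenhoupt--Stein \cite{MS}. For $\lambda \in (-1/2, 0)$, and for (ii) in the full range $\lambda > -1/2$, the verifications are new and form the core of the argument.

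The starting point is the closed-form integral representation of $R_\lambda(x,y)$ obtained from the conjugate Poisson kernel associated with $\Delta_\lambda$, equivalently from applying subordination to the Bessel heat kernel. This expresses $R_\lambda(x,y)$ as a one-dimensional integral over $(0,\pi)$ against a density involving $\sin\theta$ to a power depending on $\lambda$ and the quadratic form $x^2 - 2xy\cos\theta + y^2$ in the denominator. For the $L^2$-boundedness in the new regime $\lambda \in (-1/2, 0)$, I would invoke the fact that $R_\lambda$ is a Hankel multiplier of modulus bounded by $1$, so the boundedness is automatic from Plancherel on the Hankel side, with the CZ size/smoothness of the kernel then upgrading this to $L^p$ bounds (and, via the $T1$-type bookkeeping, fitting $R_\lambda$ into Definition \ref{def 1}).

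For size and smoothness I would split into the regimes $|x-y| \leq x/2$ (``close'') and $|x-y| > x/2$ (``far''). In the close regime, $x \approx y$ and $m_\lambda(I(x,|x-y|)) \approx x^{2\lambda}|x-y|$; a Laplace-type asymptotic analysis of the $\theta$-integral at the minimum of the denominator (which occurs at $\theta=0$, where the quadratic form equals $(x-y)^2 + xy\,\theta^2 + O(\theta^4)$) produces the matching bound $|R_\lambda(x,y)| \lesssim 1/m_\lambda(I(x,|x-y|))$ and the H\"older-type modulus of continuity required by \eqref{smooth of C-Z-S-I-O}, after differentiating under the integral and repeating the same asymptotic analysis. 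In the far regime, the denominator stays bounded below by a positive multiple of $\max(x,y)^2$, and $m_\lambda(I(x,|x-y|)) \approx \max(x,y)^{2\lambda+1}$, so both size and smoothness follow from direct majorisation inside the integral.

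Finally, for the lower bound \eqref{e-assump cz ker low bdd weak}, I would fix $x > 0$ and $r > 0$ and choose $y = x + Ar$ for a single absolute constant $A > 1$ chosen large enough so that: the factor in the integrand which is linear in $(y\cos\theta - x)$ keeps a fixed sign on a subinterval of $(0,\pi)$ of definite length; the leading asymptotic contribution from the Laplace-type analysis of step two dominates the remainder; and the resulting quantitative estimate matches $1/m_\lambda(I(x,r))$. The hard part is uniformity as $\lambda \downarrow -1/2$, where the $\theta$-weight near $0$ becomes only barely integrable, so the sign-and-size extraction must be done carefully so that neither the fixed sign nor the lower bound on the magnitude degenerate at the boundary of the parameter range. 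Once (i) and (ii) are verified, Theorems \ref{thm1}, \ref{thm2} and \ref{thm5} apply directly to yield the two-weight equivalence \eqref{equiv} and the weak factorisation of the associated weighted Hardy space $H^1_\nu(\mathbb R_+, dm_\lambda)$.
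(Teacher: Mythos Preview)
Your overall strategy is sound and matches the paper's: verify that $R_\lambda$ is Calder\'on--Zygmund on $(\mathbb R_+,|\cdot|,dm_\lambda)$ and then establish a pointwise kernel lower bound. Two points of comparison are worth making.

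First, for the size and smoothness conditions in the full range $\lambda>-1/2$, the paper does not carry out the $\theta$-integral Laplace analysis you outline. It instead invokes Betancor--Harboure--Nowak--Viviani \cite{BHNV10}, who already gave the heat-kernel representation and the precise asymptotics
\[
R_\lambda(x,y)=\frac{1}{\pi}\frac{(xy)^{-\lambda}}{y-x}+\mathcal O\Big(y^{-2\lambda-1}\big(1+\log\tfrac{xy}{(y-x)^2}\big)\Big)\qquad(x/2<y<2x),
\]
together with off-diagonal bounds, and cites Castro--Szarek \cite{CS14} for \eqref{size of C-Z-S-I-O}--\eqref{smooth of C-Z-S-I-O}. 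So the CZ conditions for $\lambda\in(-1/2,0)$ are not new; what is new here is only the lower bound. Your direct $\theta$-integral route would also work, but it re-derives estimates already in the literature.

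Second, and more substantively, your plan to verify the non-degenerate condition with a \emph{single} choice $y=x+Ar$ has a gap. From the \cite{BHNV10} asymptotics the paper extracts two separate lower bounds: a near-diagonal one $R_\lambda(x,y)\gtrsim (xy)^{-\lambda}(y-x)^{-1}$ valid only for $0<y/x-1<K_1$ with some small $K_1$, and an off-diagonal one $R_\lambda(x,y)\gtrsim x\,y^{-2\lambda-2}$ valid only for $y\ge K_2x$ with some $K_2>1$. Since in general $K_1<K_2-1$, no single $A$ puts $y=x+Ar$ in the validity range of one of these as $r/x_0$ varies over $(0,1]$. The paper therefore verifies \eqref{e-assump cz ker low bdd} (not \eqref{e-assump cz ker low bdd weak}) by a case split: after reducing to $r\le x_0$, it takes $y_0=x_0+K_2r$ when $x_0>2K_0r$ (diagonal estimate) and $y_0=x_0+4K_0^2r$ when $x_0\le 2K_0r$ (off-diagonal estimate), with $K_0=(K_1+K_2+2)/2K_1$. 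Your single-$A$ Laplace extraction would need an analogous dichotomy to close; as stated it does not cover the transition regime. Finally, the worry about uniformity as $\lambda\downarrow-1/2$ is unnecessary: $\lambda$ is fixed, and all constants in Theorems \ref{thm1}--\ref{thm2} may depend on it.
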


\begin{proof}

In \cite{BHNV10}, Betancor et al. further considered $R_\lambda$
for the range $\lambda\in(-1/2, \infty)$. They showed that for $f\in C^\infty_c(\mathbb R_+)$ and $x\in(0, \infty)$,
$$R_\lambda f(x)={\rm p. v.}\int_0^\infty R_\lambda(x,y)f(y)dm_\lambda(y)$$
with the kernel
$$R_\lambda(x,y)=\frac1{\sqrt \pi}\int_0^\infty\frac{\partial}{\partial x}W^\lambda_t(x,y)\frac{dt}{\sqrt t}$$
for $x,y\in(0, \infty)$ with $x\not=y$. Here $W^\lambda_t (x,y)$ is the heat kernel associated to $\Delta_\lambda$
\begin{align}\label{e-Bessel heat ker}
W^\lambda_t(x,y)=\frac{(xy)^{-\lambda+1/2}}{2t}e^{-(x^2+y^2)/4t}
I_{\lambda-1/2}\left(\frac{xy}{2t}\right)
\end{align}
with $I_\nu$ being the modified Bessel function of the first kind and order $\nu>-1$. They also showed that $R_\lambda$
is bounded on the space $L^p(\mathbb R_+, x^\delta dx)$ if and only if $p>1$ and $-1-p<\delta<(2\lambda+1)p-1$. Moreover,
the kernel $R_\lambda(x,y)$ has the following estimates (see \cite[Lemmas 4.3 and 4.4]{BHNV10}):
\begin{itemize}
  \item [(i)] for $x/2<y<2x$ and $x\not=y$,
  \begin{align*}
R_\lambda(x,y)=\frac1{\pi}\frac{(xy)^{-\lambda}}{y-x}+\mathcal O\left(y^{-2\lambda-1}\left(1+\log\frac{xy}{(y-x)^2}\right)\right);
\end{align*}
  \item [(ii)] in the off-diagonal region,
  \begin{align*}
  |R_\lambda(x,y)|\ls \left\{
                        \begin{array}{ll}
                          x^{-2\lambda-1}, & \hbox{$y\le x/2$;} \\
                          xy^{-2\lambda-2}, & \hbox{$2x\le y$.}
                        \end{array}
                      \right.
  \end{align*}
\end{itemize}
From this fact, one deduces that $R_\lambda(x,y)$ satisfies \eqref{size of C-Z-S-I-O} and \eqref{smooth of C-Z-S-I-O} (see \cite[Theorem 2.2]{CS14}). Moreover,
there exist ${K}_1\in(0, 1/2)$  small enough, $K_2>1$ and $C_\lambda>0$ such that
\begin{itemize}
  \item [(i)] for any $x,\,y\in\mathbb{R}_+$ with $0<y/x-1<K_1$,
\begin{equation}\label{e-Bessel Riesz low bdd-diag}
 R_\lambda(x,y)\geq C_{\lambda}{1\over x^\lambda y^\lambda}{1\over y-x};
\end{equation}

  \item [(ii)] for any $x,\,y\in\mathbb{R}_+$ with $0< K_2 x\le y$,
  \begin{equation}\label{e-Bessel Riesz low bdd-offdiag}
  R_\lambda(x,y)\ge C_\lambda xy^{-2\lambda-2}.
  \end{equation}
\end{itemize}
Then an argument involving \eqref{e-Bessel Riesz low bdd-diag} and \eqref{e-Bessel Riesz low bdd-offdiag} shows
that assumption \eqref{e-assump cz ker low bdd} holds (see also \cite[Lemma 2.3]{MWY}). 
In fact, let $I:=I(x_0,r)$ with $x_0\ge r$ and $K_0:=(K_1+K_2+2)/2K_1$. We consider the following two cases.

Case (a): $x_0\le 2K_0r$. In this case, $ m_\lz(I) \sim x_0^{2\lz}r\sim {K_0} x_0^{2\lz+1}$. Let $y_0:=x_0+4K_0^2r$. Then $(2K_0+1) x_0\le y_0\le (4K_0^2+1)x_0$.
 This via \eqref{e-Bessel Riesz low bdd-offdiag} implies that
 $$R_{\lambda}(x_0, y_0)\gs \frac{x_0}{y_0^{2\lambda+2}}\sim \frac1{m_{\lambda}(I)}.$$

Case (b): $x_0>2K_0r$. In this case, $ m_\lz(I) \sim x_0^{2\lz}r$. Let $y_0:=x_0+K_2r$. Then $0<y_0/x_0-1<K_1$  and
$$R_\lambda(x_0, y_0)\gs \frac1{x_0^{2\lambda}(y_0-x_0)}\sim \frac1{m_\lambda(I)},$$
which implies that Theorem \ref{thm3} holds  for $R_\lambda$.
\end{proof}

\subsection{Riesz Transforms Associated with Bessel Operators on $\R^{n+1}_+$}

We now recall the Bessel operator and the Bessel Riesz transform in high dimension from Huber \cite{Hu}.
Consider $\R^{n+1}_+=\R^n\times (0,\infty)$. For $\lambda> -{1\over2}$,
\begin{align}\label{Dlambda}
\Delta_\lambda^{(n+1)} = -{d^2\over dx_1^2}\cdots-{d^2\over dx_n^2} -{d^2\over dx_{n+1}^2} -{2\lambda\over x_{n+1}} {d\over dx_{n+1}}.
\end{align}
The operator $\Delta_\lambda^{(n+1)}$ is symmetric and non-negative in $C^\infty_c(\mathbb R^{n+1}_+)\subset L^2(\mathbb R^{n+1}, d\mu_\lambda)$, 
where 
$$d\mu_\lambda(x):=\prod_{j=1}^n dx_j x_{n+1}^{2\lambda}dx_{n+1}.$$
The $j$-th Riesz transform is defined as
$$ R_{\lambda,j} = {d\over dx_j} (\Delta_\lambda^{(n+1)})^{-{1\over2}},\quad j=1,\ldots,n+1. $$

We point out that there is no known results for the commutator of $R_{\lambda,j}$. Here we provide an intensive study of the kernel of
$R_{\lambda,j} $, especially for the lower bound, and then we obtain the two weight commutator and higher order commutator for $R_{\lambda,j}$.

\begin{prop}\label{prop Bessel h}
Theorem \ref{thm3} holds for the Bessel Riesz transform $R_{\lambda,j} $, $j=1,\ldots,n+1$,   with the underlying setting $(\mathbb R^{n+1}_+, |\cdot|, d\mu_\lambda)$.
\end{prop}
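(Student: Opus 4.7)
The plan is to verify the three hypotheses needed to invoke Theorems \ref{thm1}, \ref{thm2} and \ref{thm5}: first, that $(\R^{n+1}_+, |\cdot|, d\mu_\lambda)$ is a space of homogeneous type; second, that $R_{\lambda,j}$ is a Calder\'on--Zygmund operator in the sense of Definition \ref{def 1}; and third, and most importantly, that its kernel satisfies the non-degenerate condition \eqref{e-assump cz ker low bdd weak}. The first point is routine since $d\mu_\lambda(x) = x_{n+1}^{2\lambda}\,dx$ is a power weight and gives a doubling measure on $\R^{n+1}_+$ with the Euclidean quasi-metric.

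For the kernel estimates the plan is to exploit the product structure of the operator. Since the operator $\Delta_\lambda^{(n+1)}$ from \eqref{Dlambda} separates as $-\Delta_{\R^n}\otimes I + I\otimes \Delta_\lambda$, where $\Delta_\lambda$ is the one-dimensional Bessel operator on $\R_+$, the heat semigroup factorises:
$$
P_t^{(n+1),\lambda}(x,y) = P_t^{\R^n}(x'-y')\,W_t^\lambda(x_{n+1}, y_{n+1}),
$$
with $x=(x',x_{n+1})$, $y=(y',y_{n+1})$, $P_t^{\R^n}$ the classical Gauss kernel on $\R^n$, and $W_t^\lambda$ the one-dimensional Bessel heat kernel recalled in \eqref{e-Bessel heat ker}. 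The subordination formula then gives
$$
R_{\lambda,j}(x,y) = \frac{1}{\sqrt{\pi}}\int_0^\infty \partial_{x_j} P_t^{(n+1),\lambda}(x,y) \,\frac{dt}{\sqrt t}.
$$
For $j\leq n$ the derivative produces a factor $-(x_j-y_j)/(2t)$ acting on the Euclidean factor only, while for $j=n+1$ it acts only on the Bessel factor. In both cases the standard Gaussian upper and gradient bounds on $P_t^{\R^n}$, combined with the Betancor--Harboure--N\'u\~nez--Viviani estimates on $W_t^\lambda$ used in the one-dimensional case above, yield the size bound \eqref{size of C-Z-S-I-O} and the H\"older-type smoothness bound \eqref{smooth of C-Z-S-I-O} with respect to the doubling measure $\mu_\lambda$. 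The $L^2(d\mu_\lambda)$-boundedness is a consequence of the spectral theorem and subordination.

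The main obstacle is the pointwise lower bound \eqref{e-assump cz ker low bdd weak}. Here I would split into two regimes depending on the centre $x_0=(x_0',x_{0,n+1})$ and radius $r$ of the given ball. In the \emph{interior regime} $x_{0,n+1}\gg r$ the Bessel factor behaves essentially like a one-dimensional Gauss kernel with weight $x_{n+1}^{2\lambda}$, so $R_{\lambda,j}(x,y)$ inherits a lower bound from the classical Euclidean Riesz kernel: for $j\le n$ I would take $y = x_0 + c\,r\,e_j$ and observe that the subordination integral reduces, modulo a controlled multiplicative factor $x_{0,n+1}^{-2\lambda}$, to the classical computation; for $j=n+1$ the one-dimensional lower bounds \eqref{e-Bessel Riesz low bdd-diag}--\eqref{e-Bessel Riesz low bdd-offdiag} provide the needed sign and size information. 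In the \emph{boundary regime} $x_{0,n+1}\lesssim r$, the ball meets (or is close to) the boundary $\{x_{n+1}=0\}$ and one must choose $y_{n+1}\sim r$ so that $y$ sits in an annulus $B(x_0,\overline Cr)\setminus B(x_0,r)$ well inside $\R^{n+1}_+$. The Bessel factor $W_t^\lambda(x_{0,n+1},y_{n+1})$ must then be estimated directly from \eqref{e-Bessel heat ker} using the small- and large-argument asymptotics of $I_{\lambda-1/2}$; the sign and size of the resulting subordination integral will follow from the same case analysis (near-diagonal vs.\ off-diagonal in the $x_{n+1}$ variable) used in the one-dimensional argument. The hard part will be obtaining a uniform lower bound across the transition between these regimes, particularly for $j=n+1$ when $\lambda\in(-1/2,0)$, where the modified Bessel function behaves differently from the case $\lambda>0$ treated in \cite{MS}. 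Once \eqref{e-assump cz ker low bdd weak} is established, Theorems \ref{thm1}, \ref{thm2} and \ref{thm5} yield the equivalence \eqref{equiv} and the weak factorisation of $H^1_\nu(\R^{n+1}_+, d\mu_\lambda)$, completing the proof.
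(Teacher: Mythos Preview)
Your outline follows essentially the same route as the paper: the product structure of the heat kernel, the subordination formula for $R_{\lambda,j}(x,y)$, and a two-regime split (interior $x_{0,n+1}\gg r$ versus boundary $x_{0,n+1}\lesssim r$) to verify \eqref{e-assump cz ker low bdd weak}. The paper's proof confirms that this strategy works.

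What you leave as a plan, the paper carries out via two explicit kernel asymptotics (its Lemmas~\ref{p-high Riesz} and~\ref{p-high Riesz-2}). For $j\le n$ it shows, using \eqref{e-Bessel func-1} and \eqref{e-Bessel func-2}, that
\[
R_{\lambda,j}(x,y)=C_n\,\frac{y_j-x_j}{(x_{n+1}y_{n+1})^{\lambda}\,|x-y|^{n+2}}
+\mathcal O\!\left(\frac{|x_j-y_j|}{(x_{n+1}y_{n+1})^{\lambda+1}\,|x-y|^{n}}\right),
\]
which is exactly your ``behaves like a classical Riesz kernel up to the weight $x_{n+1}^{-2\lambda}$'' in the interior regime, and that as $x_{n+1}/y_{n+1}\to 0$ one has $|R_{\lambda,j}(x,y)|\gtrsim |x_j-y_j|\,(y_{n+1}^2+|x'-y'|^2)^{-(n/2+\lambda+1)}$, which covers the boundary regime. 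An analogous pair of formulas is obtained for $j=n+1$. With these in hand the case split is sharp: if $r$ is large relative to $x_{0,n+1}$ choose $y$ with $y_{n+1}\sim r$ and use the off-diagonal lower bound; otherwise choose $y$ near the diagonal and use the leading term of the expansion, the error being negligible because $|x-y|\ll x_{n+1}$. The ``transition'' you flag as the hard part is thus not a genuine obstacle---the two regimes overlap, and the same Bessel-function asymptotics \eqref{e-Bessel func-1}--\eqref{e-Bessel func-2} handle all $\lambda>-1/2$ uniformly, so no separate argument for $\lambda\in(-1/2,0)$ is needed.
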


\begin{proof}

To begin with, note that \eqref{Dlambda} can be written as
$ \Delta^{(n+1)}_\lambda = \Delta^{(n)} +  \Delta_\lambda,   $
where $\Delta^{(n)}$ denotes the standard Laplacian on $\R^n$, and $\Delta_\lambda$ denotes the Bessel operator on $\R_+$, which is one-dimensional as shown in Section 7.5. Then it is clear that
$ e^{-t\Delta^{(n+1)}_\lambda} = e^{-t(\Delta^{(n)} +  \Delta_\lambda)} $ and hence the heat kernel
$$ p_{t,\Delta^{(n+1)}_\lambda}(x,y) =p_{t,\Delta^{(n)}_\lambda}(x',y')W^\lambda_t(x_{n+1},y_{n+1})  $$
for $x=(x',x_{n+1})$, $y=(y',y_{n+1}) \in \R^n\times (0,\infty)$, where $W^\lambda_t$ is the heat kernel of $\Delta_\lambda$ as in \eqref{e-Bessel heat ker}.

Then it is direct that for $1\leq j\leq n$:
\begin{align*}
R_{\lambda,j}(x,y)&= c_{n,\lambda}\frac{\partial}{\partial x_j}  \int_0^\infty  p_{t,\Delta^{(n+1)}_\lambda}(x,y) \,\frac{dt}{\sqrt{t}}\\
&= c_{n,\lambda} \frac{\partial}{\partial x_j}  \int_0^\infty  \frac{1}{(4\pi t)^{\frac{n}{
2}}}e^{-\frac{|x'-y'|^2}{4t}} \,  W^\lambda_t(x_{n+1},y_{n+1}) \frac{dt}{\sqrt{t}}
\end{align*}
and
for $j=n+1$,
\begin{align*}
R_{\lambda,n+1}(x,y)&= c_{n,\lambda}\frac{\partial}{\partial x_{n+1}}  \int_0^\infty  p_{t,\Delta^{(n+1)}_\lambda}(x,y) \,\frac{dt}{\sqrt{t}}\\
&= c_{n,\lambda} \frac{\partial}{\partial x_{n+1}}  \int_0^\infty  \frac{1}{(4\pi t)^{\frac{n}{
2}}}e^{-\frac{|x'-y'|^2}{4t}} \,  W^\lambda_t(x_{n+1},y_{n+1}) \frac{dt}{\sqrt{t}}.
\end{align*}
 By \cite[Theorem 2.2]{CS14},  $\{R_{\lambda,j}\}_{j=1}^{n+1}$ are Calder\'on-Zygumnd operators
with kernel satisfying \eqref{size of C-Z-S-I-O}
and \eqref{smooth of C-Z-S-I-O}.

Moreover, we have the following
estimates on $\{R_{\lambda,j}(x,y)\}_{j=1}^{n+1}$.

\begin{lem}\label{p-high Riesz} Let $j\in\{1, \ldots, n\}$. The following statements hold:
\begin{itemize}
  \item [(i)] There exist positive constants $\wz C, \wz c>1$ such that for any $(x, y)$ with $0<x_{n+1}\le y_{n+1}/\wz c$, $R_{\lambda,\,j}(x,y)$ does not change sign and
  $$|R_{\lambda,\,j}(x,y)|\ge \wz C\frac{|x_j-y_j|}{(y_{n+1}^2+|x'-y'|^2)^{\frac{n}2+\lambda+1}}.$$
  \item [(ii)] There exists a positive constant $C_n$ such that for any
  $(x,y)$,
  $$R_{\lambda,\,j}(x,y)= C_n \frac{y_j-x_j}{x_{n+1}^\lambda y_{n+1}^\lambda|x-y|^{n+2}}
      +\mathcal O\left(\frac1{x^{\lambda+1}_{n+1}y^{\lambda+1}_{n+1}}
      \frac{|x_j-y_j|}{|x-y|^{n}}\right).
      $$
\end{itemize}
\end{lem}

\begin{proof}
By \eqref{e-Bessel heat ker} and 
letting $x_{n+1}=z y_{n+1}$ and $u=\frac{2t}{y_{n+1}^2}$, we see that
\begin{align*}
R_{\lambda,j}(x,y)&= c_{n,\lambda}\int_0^\infty \frac1{(4\pi t)^{\frac n2}}
e^{-\frac{|x'-y'|^2}{4t}} \frac{x_j-y_j}{-2t}\frac{(x_{n+1}y_{n+1})^{-\lambda+\frac12}}{2t}
I_{\lambda-\frac12}\left(\frac{x_{n+1}y_{n+1}}{2t}\right)e^{-\frac{x_{n+1}^2+y_{n+1}^2}{4t}} \frac{dt}{\sqrt{t}}  \\
&=c_{n,\lambda}\int_0^\infty \frac1{(4\pi t)^{\frac n2}}
e^{-\frac{|x'-y'|^2}{4t}} \frac{x_j-y_j}{-2t}\left(\frac{y^2_{n+1}z}{2t}\right)^{-\lambda+\frac12}\\
&\quad \times I_{\lambda-\frac12}\left(\frac{y^2_{n+1} z}{2t}\right)(2t)^{-\lambda}e^{-\frac{y_{n+1}^2(1+z^2)}{4t}} \frac{dt}{\sqrt{t}} \\
&=c_{n,\lambda}(y_j-x_j)\int_0^\infty (uy^2_{n+1})^{-\frac n2-\lambda-1}e^{-\frac{|x'-y'|^2}{2uy^2_{n+1}}} e^{-\frac{1+z^2}{2u}} \left(\frac zu\right)^{-\lambda +\frac12}I_{\lambda-\frac12}\left(\frac zu\right)\frac{du}{u}.
\end{align*}
Recall that for any $z>0$ and $\nu>-1$,
\begin{align}\label{e-Bessel func-1}
\lim_{z\to 0^+} z^{-\nu} I_\nu(z)=\frac1{2^\nu\Gamma(\nu+1)},
\end{align}
and for any $z>0$, $\nu>-1$ and $n=0, 1, 2, \ldots$,
\begin{align}\label{e-Bessel func-2}
I_\nu(z)=\frac{e^z}{\sqrt{2\pi z}}\left(\sum_{k=0}^n(-1)^k[\nu, k](2z)^{-k}+\mathcal O(z^{-n-1})\right),
\end{align}
where $[\nu, 0]:=1$ and for $k\in\mathbb N$,
$$[\nu, k]:=\frac{(4\nu^2-1)(4\nu^2-3^2)\cdots(4\nu^2-(2k-1)^2)}{2^{2k}\Gamma(k+1)};$$
see \cite[p.\,109]{BHNV10}.

Then letting $z\to 0$ and applying
the Lebesgue Dominated Convergence Theorem, we have
\begin{align*}
R_{\lambda,j}(x,y)&\to c_{n,\lambda}(x_j-y_j)y_{n+1}^{-n-2\lambda-2}\int_0^\infty u^{-\frac n2-\lambda-2} e^{-\frac1{2u}(\frac{|x'-y'|^2}{y_{n+1}^2}+1)}du\\
&=c\frac{x_j-y_j}{(y^2_{n+1}+|x'-y'|^2)^{\frac n2+1+\lambda}}.
\end{align*}
This shows (i).


Now for $j=1, 2, \ldots, n$, let
$$R_j(x, y):= c_{n, \lambda}\int_0^\infty \frac1{(4\pi t)^\frac n2}e^{-\frac{|x'-y'|^2}{4t}}\frac{x_j-y_j}{-2t} \frac1{x^\lambda_{n+1}y^\lambda_{n+1}}
W_t(x_{n+1}, y_{n+1})\frac{dt}{\sqrt{t}},$$
where
$$W_t(x_{n+1}, y_{n+1}):=\frac1{\sqrt{4\pi t}}e^{-\frac{(x_{n+1}-y_{n+1})^2}{4t}}.$$
Then we see that for any $x, y\in \mathbb R^n$ such that $x_j\not= y_j$,
$$R_j(x, y)= C_n \frac{y_j-x_j}{x_{n+1}^\lambda y_{n+1}^\lambda|x-y|^{n+2}}.$$

On the other hand, by using \eqref{e-Bessel func-2} for $n=0$,
we conclude that
\begin{align*}
&|R_{\lambda,\, j}(x, y)-R_j(x, y)|\\
&= c_{n,\lambda}\bigg| \int_0^\infty \frac1{(4\pi t)^{\frac n2}}
e^{-\frac{|x'-y'|^2}{4t}} \frac{x_j-y_j}{-2t} \\
&\qquad\qquad\times \bigg[\frac{(x_{n+1}y_{n+1})^{-\lambda+\frac12}}{2t}
   {e^{x_{n+1}y_{n+1} \over2t}   \over   \sqrt{ 2\pi\cdot {x_{n+1}y_{n+1} \over2t}  }  } \Big(  1+ C {2t \over x_{n+1}y_{n+1}}  \Big)      e^{-\frac{x_{n+1}^2+y_{n+1}^2}{4t}} \\
&\quad\hskip3cm- \frac1{x^\lambda_{n+1}y^\lambda_{n+1}}
\frac1{\sqrt{4\pi t}}e^{-\frac{(x_{n+1}-y_{n+1})^2}{4t}} \bigg]\frac{dt}{\sqrt{t}}\bigg| \\
&\ls \int_0^\infty \frac1{(4\pi t)^{\frac n2}} e^{-\frac{|x-y|^2}{4t}}\frac{|x_j-y_j|}{x^{\lambda+1}_{n+1}y^{\lambda+1}_{n+1}}\frac{dt}{t}\\
&\ls \frac{|x_j-y_j|}{x^{\lambda+1}_{n+1}y^{\lambda+1}_{n+1}}\int_0^\infty \frac1{t^{\frac{n}2+1}} e^{-\frac{|x-y|^2}{4t}}dt\\
&\ls\frac{|x_j-y_j|}{x^{\lambda+1}_{n+1}y^{\lambda+1}_{n+1}}\frac1{|x-y|^{n}}.
\end{align*}
The proof of Lemma \ref{p-high Riesz} is complete.
\end{proof}

Similarly, for $R_{\lambda,\,n+1}(x,y)$, we have  show the following lemma.

\begin{lem}\label{p-high Riesz-2}
The following statements hold:
\begin{itemize}
 \item [(i)] There exist positive constants $\wz C, \wz c\ge1$ such that for any $(x, y)$ with $0<x_{n+1}\le y_{n+1}/\wz c$ and $\frac{\lambda+\frac12}{\lambda+1}<\frac{y_{n+1}^2}{y_{n+1}^2+|x'-y'|^2}<1$, $R_{\lambda,\,n+1}(x,y)$ does not change sign and
  $$|R_{\lambda,\,n+1}(x,y)|\ge \wz C\frac{x_{n+1}}{(y_{n+1}^2+|x'-y'|^2)^{\frac{n}2+\lambda+1}}.$$
  \item [(ii)] There exists a positive constant $C_n$ such that for any
  $(x,y)$,
  $$R_{\lambda,\,n+1}(x,y)= C_n\frac{y_{n+1}-x_{n+1}}{(x_{n+1}y_{n+1})^\lambda}\frac1{|x-y|^{n+2}}
      +\mathcal O\left(\frac1{x^{\lambda}_{n+1}y^{\lambda+1}_{n+1}}\frac1{|x-y|^n}\right).
      $$
\end{itemize}
\end{lem}
\begin{proof}
 Observe that
\begin{align*}
\frac{\partial}{\partial x_{n+1}} W^\lambda_t(x_{n+1}, y_{n+1}) &=\frac1{(2t)^{\lambda +\frac12}}\left[x_{n+1}\left(\frac{y_{n+1}}{2t}\right)^2 \left(\frac{x_{n+1}y_{n+1}}{2t}\right)^{-\lambda-\frac12}I_{\lambda+\frac12}
\left(\frac{x_{n+1}y_{n+1}}{2t}\right)\right.\\
&\left.\quad-\frac{x_{n+1}}{2t}\left(\frac{x_{n+1}y_{n+1}}{2t}\right)^{-\lambda+\frac12}
I_{\lambda-\frac12}\left(\frac{x_{n+1}y_{n+1}}{2t}\right)\right] e^{-\frac{x^2_{n+1}+y^2_{n+1}}{4t}};
\end{align*}
see, \cite{BHNV10}. Then we have
\begin{align*}
R_{\lambda,\,n+1}(x,y)&=c_{n,\lambda}\int_0^\infty \frac1{(4\pi t)^{\frac n2}}
e^{-\frac{|x'-y'|^2}{4t}}\frac1{(2t)^{\lambda +\frac12}}
x_{n+1}\left(\frac{y_{n+1}}{2t}\right)^2 \left(\frac{x_{n+1}y_{n+1}}{2t}\right)^{-\lambda-\frac12}\\
&\quad\quad\times I_{\lambda+\frac12}
\left(\frac{x_{n+1}y_{n+1}}{2t}\right)
e^{-\frac{x^2_{n+1}+y^2_{n+1}}{4t}}\frac{dt}{\sqrt t}\\
&\quad-c_{n,\lambda}\int_0^\infty \frac1{(4\pi t)^{\frac n2}}
e^{-\frac{|x'-y'|^2}{4t}}\frac1{(2t)^{\lambda +\frac12}}
\frac{x_{n+1}}{2t}\left(\frac{x_{n+1}y_{n+1}}{2t}\right)^{-\lambda+\frac12}\\
&\quad\quad\times I_{\lambda-\frac12}\left(\frac{x_{n+1}y_{n+1}}{2t}\right)
e^{-\frac{x^2_{n+1}+y^2_{n+1}}{4t}}\frac{dt}{\sqrt t}.
\end{align*}

By change of variables, we have
\begin{align*}
R_{\lambda,\,n+1}(x,y)&=c_{n,\lambda}\bigg[x_{n+1}\int_0^\infty \frac1{(4\pi t)^{\frac n2}}
e^{-\frac{|x'-y'|^2}{4t}}\frac1{(2t)^{\lambda +\frac12}}
\left(\frac{y_{n+1}}{2t}\right)^2 \left(\frac{y^2_{n+1}z}{2t}\right)^{-\lambda-\frac12}\\
&\quad\quad\times I_{\lambda+\frac12}\left(\frac{y^2_{n+1}z}{2t}\right)
e^{-\frac{y^2_{n+1}(1+z^2)}{4t}}\frac{dt}{\sqrt t}\\
&\quad -x_{n+1}\int_0^\infty \frac1{(4\pi t)^{\frac n2}}
e^{-\frac{|x'-y'|^2}{4t}}\frac1{(2t)^{\lambda +\frac12}}
\frac{1}{2t}\left(\frac{y_{n+1}^2z}{2t}\right)^{-\lambda+\frac12}\\
&\quad\quad\times I_{\lambda-\frac12}\left(\frac{y_{n+1}^2z}{2t}\right)
e^{-\frac{y^2_{n+1}(1+z^2)}{4t}}\frac{dt}{\sqrt t} \bigg]\\
&= c_{n,\,\lambda}\left[\frac{x_{n+1}}{y_{n+1}^{n+2+2\lambda}}\int_0^\infty \frac1{u^{\frac n2+\lambda+2}}e^{-\frac{|x'-y'|^2}{2u y^2_{n+1}}}\left(\frac zu\right)^{-\lambda -\frac12}I_{\lambda+\frac12}\left(\frac zu\right) e^{-\frac{1+z^2}{2u}}\frac{du}{u}\right.\\
&\quad\quad-\left.\frac{x_{n+1}}{y_{n+1}^{n+2\lambda+2}}\int_0^\infty \frac1{u^{\frac n2+\lambda+1}}e^{-\frac{|x'-y'|^2}{2uy_{n+1}^2}}
\left(\frac zu\right)^{-\lambda+\frac12}I_{\lambda-\frac12}\left(\frac zu\right)
e^{-\frac{1+z^2}{2u}}\frac{du}{u}\right].
\end{align*}
By letting $z\to0$ and applying \eqref{e-Bessel func-1}, we see that
\begin{align*}
R_{\lambda,\,n+1}(x,y)\frac{y_{n+1}^{n+2\lambda+2}}{x_{n+1}}&\to
C_{n,\,\,\lambda}\left[\frac1{2^{\lambda+\frac12}\Gamma(\lambda+\frac32)}\int_0^\infty \frac1{u^{  \frac n2+\lambda+2}}e^{-\frac1{2u}(1+\frac{|x'-y'|^2}{y_{n+1}^2})}\frac{du}{u}
\right.\\
&\quad-\left.\frac1{2^{\lambda-\frac12}\Gamma(\lambda+\frac12)}\int_0^\infty \frac1{u^{\frac n2+\lambda+1}}e^{-\frac1{2u}(1+\frac{|x'-y'|^2}{y_{n+1}^2})}\frac{du}{u}
\right]\\
&=C_{n,\,\,\lambda}2^{\frac{n+3}2}
\frac{y_{n+1}^{n+2\lambda+2}}{(y_{n+1}^2+|x'-y'|^2)^{\frac n2+\lambda+1}}\frac{\Gamma(\lambda+\frac n2-1)}{\Gamma(\lambda+\frac12)}\\
&\times\left(\frac{y_{n+1}^2}{y_{n+1}^2+|x'-y'|^2}\frac{\lambda+\frac n2-1}{\lambda+\frac12}-1\right)\\
&=C_{n,\,\lambda}\frac{y_{n+1}^{n+2\lambda+2}}{(y_{n+1}^2+|x'-y'|^2)^{\frac n2+\lambda+1}}\left(\frac{y_{n+1}^2}{y_{n+1}^2+|x'-y'|^2}\frac{\lambda+\frac n2+1}{\lambda+\frac12}-1\right).
\end{align*}
Since 
$$\frac{y_{n+1}^2}{y_{n+1}^2+|x'-y'|^2}\frac{\lambda+\frac n2+1}{\lambda+\frac12}-1>0,$$
the conclusion (i) holds.

Let
\begin{align*}
{\rm H}(x,y)&:=c_{n,\lambda}\int_0^{\infty}
\frac1{(4\pi t)^{\frac n2}}
e^{-\frac{|x-y|^2}{4t}}\frac1{(2t)^{\lambda +\frac12}}\left(\frac{x_{n+1}y_{n+1}}{2t}\right)^{-\lambda}\\
&\quad\times \left[x_{n+1}\left(\frac{y_{n+1}}{2t}\right)^2\left(\frac{x_{n+1}y_{n+1}}{2t}\right)^{-1}
-\frac{x_{n+1}}{2t}\right]\frac{dt}{\sqrt t}.
\end{align*}
Then observe that
\begin{align*}
&\int_0^{\infty}
\frac1{(4\pi t)^{\frac n2}}
e^{-\frac{|x-y|^2}{4t}}\frac1{(2t)^{\lambda +\frac12}}
\left(\frac{x_{n+1}y_{n+1}}{2t}\right)^{-\lambda} \left[x_{n+1}\left(\frac{y_{n+1}}{2t}\right)^2\left(\frac{x_{n+1}y_{n+1}}{2t}\right)^{-1}
-\frac{x_{n+1}}{2t}\right]\frac{dt}{\sqrt t}\\
&\quad= C_n \frac{y_{n+1}-x_{n+1}}{(x_{n+1}y_{n+1})^\lambda}\int_0^\infty \frac1{t^{\frac n2+2}} e^{-\frac{|x-y|^2}{4t}}dt\\
&\quad= C_n\frac{y_{n+1}-x_{n+1}}{(x_{n+1}y_{n+1})^\lambda}\frac1{|x-y|^{n+2}}.
\end{align*}
By using \eqref{e-Bessel func-2} for $n=0$,
\begin{align*}
\left|R_{\lambda,\,n+1}(x, y)-{\rm H}(x,y)\right|& \ls
\int_0^{\infty}\frac1{t^{\frac n2+\lambda+1}}e^{-\frac{|x-y|^2}{4t}}
\left(\frac{x_{n+1}y_{n+1}}{2t}\right)^{-\lambda}\frac1{y_{n+1}}dt\ls \frac1{x^{\lambda}_{n+1}y^{\lambda+1}_{n+1}}\frac1{|x-y|^n}.
\end{align*}
We then see that (ii) holds.
%
The proof of Lemma \ref{p-high Riesz-2} is complete.
\end{proof}

We now return to the proof of Proposition \ref{prop Bessel h}.
Based on Lemmas \ref{p-high Riesz} and \ref{p-high Riesz-2},
we see that \eqref{e-assump cz ker low bdd} holds. Indeed, 
For any $x:=(x_1, \ldots, x_{n+1})\in \mathbb R^{n+1}_+$ and $r\in(0, \infty)$, let $$Q(x, r):=\{y:=(y_1,\ldots, y_{n+1})\in \mathbb R^{n+1}_+:\, |x_j-y_j|\le r/2,
j\in\{1, \ldots, n+1\}\}.$$
Then we have $\mu_\lambda(Q(x, r))\sim r^{n+1} x_{n+1}^{2\lambda}$. Let
$C_0\gg \tilde c$. For any $x :=(x _1, \ldots, x _{n+1})
\in \mathbb R^{n+1}_+$
and $r\in (0, \infty)$, if $r>\frac{\tilde c-1}{C_0}x _{n+1}$,
take
$y:=(y_1, \ldots, y_{n+1})$ such that $y_i=x_i+C_0r$ for $i=j$ or $i=n+1$ and $y_i:=x_i$ otherwise. Then by Lemma \ref{p-high Riesz}(i),
we see that $y_{n+1}\ge \tilde c x_{n+1}$ and
$$|R_{\lambda,\,j}(x,y)|\gs \frac{C_0r}{[(y_{n+1}^2+(C_0r)^2]^{\frac n2+\lambda+1}}\gs
\frac1{\mu_\lambda(Q(x, r))}.$$
If $r\le\frac{\tilde c-1}{C_0}x _{n+1}$, then there exists $y\in \mathbb R^{n+1}_+$
such that
$|y_{n+1}-x_{n+1}|=|y_j-x_j|\sim |y-x|$ and
$|y_{n+1}-x_{n+1}|\ll x_{n+1}$. Then by
Lemma \ref{p-high Riesz}(ii), we also have
$$|R_{\lambda,\,j}(x, y)|\gs \frac{|y_j-x_j|}{x_{n+1}^\lambda y_{n+1}^\lambda|x-y|^{n+2}}\sim \frac1{\mu_\lambda(Q(x, r))}.$$
Therefore, \eqref{e-assump cz ker low bdd weak} holds for $R_{\lambda,\,j}(x,y)$,
$j\in\{1,\ldots, n\}$. The argument for $R_{\lambda,\,n+1}(x, y)$
is similar and omitted. Then Theorem
 \ref{thm3} holds for the Bessel Riesz transform $R_{\lambda,j} $, $j=1,\ldots,n+1$. 

The proof of Proposition \ref{prop Bessel h} is complete.
\end{proof}

\section{A Digression to Product Setting: Little bmo Space}
\label{s:MainResult 3}

In this section we consider the weighted little bmo space on product spaces of homogeneous type.
To begin with, let $(X_1,d_1,\mu_1)$ and $(X_2,d_2,\mu_2)$ be two copies of spaces of homogeneous type as stated in Section 2, and denote
$\vec{X}:=X_1\times X_2$, $\vec\mu:=\mu_1\times\mu_2$. Moreover, for the points in $\vec X$, we denote $\vec x := (x_1,x_2)\in \vec X$.

Mac\'ias and
Segovia~\cite{MS1} proved the following fundamental result on spaces
of homogeneous type.
Suppose that $(X,d)$ is a space endowed
with a quasi-metric $d$
that may have no regularity.
    Then there exists a quasi-metric $d'$  {that is pointwise equivalent to $d$}
    such that $d(x,y)\sim d'(x,y)$ for all $x,y\in X$
    and there exist constants $\theta\in(0,1)$ and $C > 0$ so that
    $d'$ has the following regularity:
    \begin{eqnarray*} 
        |d'(x,y) - d'(x',y)|
        \le C \, d'(x,x')^\theta \,
            [d'(x,y) + d'(x',y)]^{1 - \theta}.
    \end{eqnarray*}
    for all $x$, $x'$, $y\in X$. Moreover, if the quasi-metric
    balls are defined by this new quasi-metric~$d'$, that is,
    $B'(x,r) := \{y\in X: d'(x,y) < r\}$ for $r > 0$, then
    these balls are open in the topology induced by $d'$. See
    \cite[Theorem 2, p.259]{MS1}.
So, without lost of generality, we assume that in our product setting, the quasi-metrics $d_1$ and $d_2$ have
regularity with constants $\theta_1$ and $\theta_2$, respectively.

We now recall the product $A_p(\vec{X})$ weights on product spaces of homogeneous type.
\begin{defn}
  \label{def:Ap product}
  Let $w(x_1,x_2)$ be a nonnegative locally integrable function
  on~$\vec{X}$. For $1 < p < \infty$, we
  say $w$ is a product $A_p$ \emph{weight}, written as $w\in
  A_p(\vec{X})$, if
  \[
    [w]_{A_p(\vec{X})}
    := \sup_R \left(\intav_R w\right)
    \left(\intav_R
      \left(\dfrac{1}{w}\right)^{1/(p-1)}\right)^{p-1}
    < \infty.
  \]
  Here the supremum is taken over all ``rectangles''~$R:= B_1\times B_2\subset \vec{X}$, where $B_i$ are balls in $X_i$ for $i=1,2$.
  The quantity $[w]_{A_p(\vec{X})}$ is called the \emph{$A_p$~constant
  of~$w$}.
\end{defn}

Next we recall the weighted little bmo space  on product spaces of homogeneous type.
\begin{defn}
  \label{def:bmo}
 For $1 < p < \infty$ and $w\in A_p(\vec{X})$, the weighted little bmo space ${\rm bmo}_w(\vec{X})$ is the space of all locally integrable functions $b$ on $\vec{X}$ such that
 $$ \|b\|_{{\rm bmo}_w(\vec{X})}= \sup_R {1\over w(R)}\int_{R}|b(\vec x) - b_R|d\vec \mu(\vec x)<\infty, $$
where  the supremum is taken over all ``rectangles''~$R= B_1\times B_2\subset \vec{X}$, where $B_i$ are balls in $X_i$ for $i=1,2$.
\end{defn}


Similar to \cite[Section 7.1]{HPW}, we introduce the bi-parameter Journ\'e operator on $\vec{X}$ as follows.  Let $C_0^{\eta_1}(X_1)$, $\eta_1\in(0,\theta_1]$, denote the space of continuous functions $f$ with bounded support such that
$$ \|f\|_{C_0^{\eta_1}(X_1)}:= \sup_{x,y\in X_1,x\not=y} {|f(x)-f(y)|\over d_1(x,y)^{\eta_1}}<\infty. $$
Let $C_0^{\eta_2}(X_2)$, $\eta_2\in(0,\theta_2]$ be defined similarly.

\noindent {\bf I.}  Structural Assumptions: Given $f=f_1\otimes f_2$ and $g=g_1\otimes g_2$, where $f_i,g_i: X_i\to \mathbb C$, $f_i,g_i\in C_0^{\eta_i}(X_i)$ satisfy
${\rm supp}\, f_i\cap {\rm supp}\, g_i=\emptyset$ for $i=1,2$, we  assume the kernel representation
$$ \langle Tf,g \rangle = \int_{\vec X}\int_{\vec X} K(\vec x,\vec y) f(\vec y) g(\vec x) d\vec \mu(\vec y) d\vec \mu(\vec x). $$
The kernel $K: \vec X\times \vec X\backslash\{ (\vec x,\vec y)\in \vec X\times \vec X:\ x_1=y_1, {\rm\ or\ } \ x_2=y_2 \}\to\mathbb C$ is assumed to satisfy:

\noindent 1.  Size condition:
$$  |K(\vec x,\vec y)|\leq C {1\over \mu_1(B(x_1,d_1(x_1,y_1)))  \mu_2(B(x_2,d_2(x_2,y_2)))  }. $$
\noindent 2.  H\"older conditions:

2a. if $d_1(y_1,y'_1)\leq {1\over 2A_0}d_1(x_1,y_1)$ and  $d_2(y_2,y'_2)\leq {1\over 2A_0}d_2(x_2,y_2)$:
\begin{align*}
 &|K(\vec x,\vec y) - K(\vec x, (y_1,y'_2)) -K(\vec x, (y'_1,y_2)) +K(\vec x, \vec y'))    |\\
 &\leq C {d_1(y_1,y'_1)^\delta d_2(y_2,y'_2)^\delta \over \mu_1(B(x_1,d_1(x_1,y_1))) d_1(x_1,y_1)^\delta  \mu_2(B(x_2,d_2(x_2,y_2)))d_2(x_2,y_2)^\delta  }.
\end{align*}

2b. if $d_1(x_1,x'_1)\leq {1\over 2A_0}d_1(x_1,y_1)$ and  $d_2(x_2,x'_2)\leq {1\over 2A_0}d_2(x_2,y_2)$:
\begin{align*}
 &|K(\vec x,\vec y) - K((x_1,x'_2),\vec y) - K((x'_1,x_2),\vec y) +K(\vec x', \vec y))    |\\
 &\leq C {d_1(x_1,x'_1)^\delta d_2(x_2,x'_2)^\delta \over \mu_1(B(x_1,d_1(x_1,y_1))) d_1(x_1,y_1)^\delta  \mu_2(B(x_2,d_2(x_2,y_2)))d_2(x_2,y_2)^\delta  }.
\end{align*}

2c. if $d_1(y_1,y'_1)\leq {1\over 2A_0}d_1(x_1,y_1)$ and  $d_2(x_2,x'_2)\leq {1\over 2A_0}d_2(x_2,y_2)$:
\begin{align*}
 &|K(\vec x,\vec y) - K((x_1,x'_2),\vec y) - K(\vec x, (y'_1,y_2))  +K( (x_1,x'_2), (y'_1,y_2))    |\\
 &\leq C {d_1(y_1,y'_1)^\delta d_2(x_2,x'_2)^\delta \over \mu_1(B(x_1,d_1(x_1,y_1))) d_1(x_1,y_1)^\delta  \mu_2(B(x_2,d_2(x_2,y_2)))d_2(x_2,y_2)^\delta  }.
\end{align*}

2d. if $d_1(x_1,x'_1)\leq {1\over 2A_0}d_1(x_1,y_1)$ and  $d_2(y_2,y'_2)\leq {1\over 2A_0}d_2(x_2,y_2)$:
\begin{align*}
 &|K(\vec x,\vec y)  - K(\vec x, (y_1,y'_2)) - K((x'_1,x_2),\vec y)   +K( (x'_1,x_2), (y_1,y'_2))    |\\
 &\leq C {d_1(x_1,x'_1)^\delta d_2(y_2,y'_2)^\delta \over \mu_1(B(x_1,d_1(x_1,y_1))) d_1(x_1,y_1)^\delta  \mu_2(B(x_2,d_2(x_2,y_2)))d_2(x_2,y_2)^\delta  }.
\end{align*}
\noindent 3. Mixed size and H\"older conditions:

3a. if $d_1(x_1,x'_1)\leq {1\over 2A_0}d_1(x_1,y_1)$:
\begin{align*}
 |K(\vec x,\vec y) - K( (x'_1,x_2), \vec y))    |\leq C {d_1(x_1,x'_1)^\delta  \over \mu_1(B(x_1,d_1(x_1,y_1))) d_1(x_1,y_1)^\delta  \mu_2(B(x_2,d_2(x_2,y_2)))   }.
\end{align*}

3b. if $d_1(y_1,y'_1)\leq {1\over 2A_0}d_1(x_1,y_1)$:
\begin{align*}
 |K(\vec x,\vec y) - K(  \vec x, (y'_1,y_2) ))    |\leq C {d_1(y_1,y'_1)^\delta  \over \mu_1(B(x_1,d_1(x_1,y_1))) d_1(x_1,y_1)^\delta  \mu_2(B(x_2,d_2(x_2,y_2)))   }.
\end{align*}

3c. if $d_2(x_2,x'_2)\leq {1\over 2A_0}d_2(x_2,y_2)$:
\begin{align*}
 |K(\vec x,\vec y) - K( (x_1,x'_2), \vec y))    |\leq C {d_2(x_2,x'_2)^\delta  \over \mu_1(B(x_1,d_1(x_1,y_1)))   \mu_2(B(x_2,d_2(x_2,y_2)))  d_2(x_2,y_2)^\delta }.
\end{align*}

3d. if $d_2(y_2,y'_2)\leq {1\over 2A_0}d_2(x_2,y_2)$:
\begin{align*}
 |K(\vec x,\vec y) - K(  \vec x, (y_1,y'_2) ))    |\leq C {d_2(y_2,y'_2)^\delta  \over \mu_1(B(x_1,d_1(x_1,y_1)))   \mu_2(B(x_2,d_2(x_2,y_2)))  d_2(x_2,y_2)^\delta }.
\end{align*}

\noindent 4. Calder\'on--Zygmund structure in $X_1$ and $X_2$ separately: If $f = f_1\otimes f_2$ and $g = g_1\otimes g_2$ with ${\rm supp} f_1\cap {\rm supp} g_1=\emptyset$, we assume the kernel representation:
$$ \langle Tf,g \rangle=\int_{X_1}\int_{X_1} K_{f_2,g_2}(x_1,y_1)f_1(y_1)g_1(x_1)d\mu_1(x_1)d\mu_1(y_1),  $$
where the kernel $K_{f_2,g_2}:\ X_1\times X_1\backslash \{ (x_1,y_1)\in X_1\times X_1: x_1=y_1 \}$ satisfies the following size condition:
$$ |K_{f_2,g_2}(x_1,y_1)|\leq C(f_2,g_2) {1\over  \mu_1(B(x_1,d_1(x_1,y_1))) }  $$
and H\"older conditions:
$$ |K_{f_2,g_2}(x_1,y_1)-K_{f_2,g_2}(x'_1,y_1)|\leq   {C(f_2,g_2)\ d_1(x_1,x'_1)^\delta \over  \mu_1(B(x_1,d_1(x_1,y_1))) d_1(x_1,y_1)^\delta }, {\rm \ }
   d_1(x_1,x'_1)\leq {1\over 2A_0}d_1(x_1,y_1), $$
$$ |K_{f_2,g_2}(x_1,y_1)-K_{f_2,g_2}(x_1,y'_1)|\leq   {C(f_2,g_2)\ d_1(y_1,y'_1)^\delta \over  \mu_1(B(x_1,d_1(x_1,y_1))) d_1(x_1,y_1)^\delta }, {\rm \ }
   d_1(x_1,x'_1)\leq {1\over 2A_0}d_1(x_1,y_1). $$
We only assume the above representation and a certain control over $C(f_2,g_2)$ on the diagonal, that is:
$$  C(\chi_{Q_2},\chi_{Q_2}) +C(\chi_{Q_2},u_{Q_2}) +C(u_{Q_2},\chi_{Q_2}) \leq C\mu_2(Q_2)   $$
for all cubes $Q_2\subset X_2$ and all ``$Q_2$-adapted zero-mean'' functions $u_{Q_2}$-- that is, ${\rm supp }\,u_{Q_2} \subset Q_2$,
$|u_{Q_2}|\leq1$ and $\int_{X_2} u_{Q_2}(x_2)d\mu_2(x_2)=0$. We assume the symmetrical representation with kernel $K_{f_1,g_1}$
in the case ${\rm supp} f_2\cap {\rm supp} g_2 =\emptyset$.

\noindent {\bf II.} Boundedness and Cancellation Assumptions:

\noindent 1. Assume $T1$, $T^*1$,  $T_11$,  and $T_1^*1$ are in product ${\rm BMO}(\vec X)$, where $T_1$ is the partial adjoint of $T$ defined by $ \langle T_1(f_1\otimes f_2), g_1\otimes g_2 \rangle =  \langle T_1(g_1\otimes f_2), (f_1\otimes g_2) \rangle $.

\noindent 2.  Assume $| \langle T_1(\chi_{Q_1}\otimes \chi_{Q_2}), \chi_{Q_1}\otimes \chi_{Q_2} \rangle |\leq C\mu_1(Q_1)\mu_2(Q_2) $ for all cubes $Q_i\in X_i$ (weak boundedness).

\noindent 3. Diagonal BMO conditions: for all cubes $Q_i\subset X_i$ and all non-zero functions $a_{Q_1}$ and $b_{Q_2}$ that are
$Q_1-$ and $Q_2-$ adapted, respectively, assume:

\noindent $| \langle T_1(a_{Q_1}\otimes \chi_{Q_2}), \chi_{Q_1}\otimes \chi_{Q_2} \rangle |\leq C\mu_1(Q_1)\mu_2(Q_2) $,\ \
$| \langle T_1(\chi_{Q_1}\otimes \chi_{Q_2}), a_{Q_1}\otimes \chi_{Q_2} \rangle |\leq C\mu_1(Q_1)\mu_2(Q_2) $,

\noindent $| \langle T_1(\chi_{Q_1}\otimes b_{Q_2}), \chi_{Q_1}\otimes \chi_{Q_2} \rangle |\leq C\mu_1(Q_1)\mu_2(Q_2) $,\ \
$| \langle T_1(\chi_{Q_1}\otimes \chi_{Q_2}), \chi_{Q_1}\otimes b_{Q_2} \rangle |\leq C\mu_1(Q_1)\mu_2(Q_2) $.

For the upper bound of the commutator of such operators $T$ and $b\in {\rm bmo}_w(\vec{X})$, following the same approach as that in \cite{HPW}, and combining all necessary tools as recalled in Section 2 on spaces of homogeneous type (such as the adjacent dyadic system, Haar basis, et al), we obtain that
\begin{thm}
Let $1 < p < \infty$ and $\lambda_1,\lambda_2\in A_p(\vec{X})$, and define
$\nu= \lambda_1^{1\over p} \lambda_2^{-{1\over p}}$. Let $T$ be a bi-parameter Journ\'e operator on $\vec{X}$ and $b\in {\rm bmo}_\nu(\vec{X})$.  Then we obtain that
$$ \| [b,T]:\ L^p_{\lambda_1}(\vec{X})\to L^p_{\lambda_2}(\vec{X}) \|\lesssim \|b\|_{{\rm bmo}_\nu(\vec{X})}. $$
\end{thm}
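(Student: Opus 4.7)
The plan is to follow the approach of Holmes--Petermichl--Wick \cite{HPW} in the Euclidean setting and adapt it to spaces of homogeneous type using the tools assembled in Section \ref{s2}. First I would set up the bi-parameter dyadic machinery on $\vec{X}=X_1\times X_2$: take the adjacent dyadic systems $\{\mathscr{D}^{t_1}\}_{t_1=1}^{\mathpzc T_1}$ on $X_1$ and $\{\mathscr{D}^{t_2}\}_{t_2=1}^{\mathpzc T_2}$ on $X_2$ from Theorem \ref{thm:existence2}, form the product families $\mathscr{D}^{t_1}\otimes\mathscr{D}^{t_2}$, and build a bi-parameter Haar basis on $\vec{X}$ out of the one-parameter Haar systems of Theorem \ref{thm:convergence}. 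Iterating Theorem \ref{thm square} coordinatewise and invoking the Rubio de Francia extrapolation on spaces of homogeneous type from \cite{AD, DGPP} yields weighted $L^p$ bounds for the associated bi-parameter square function under $A_p(\vec X)$ weights.

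Next I would establish a Martikainen-type bi-parameter dyadic representation: every Journ\'e operator $T$ satisfying assumptions~I.1--I.4 and II.1--II.3 can be written as
\[
  T \;=\; \mathbb{E}_{t_1,t_2}\sum_{\vec i,\vec j} 2^{-\delta(|\vec i|+|\vec j|)}\, S^{\vec i,\vec j}_{t_1,t_2} \;+\; \Pi_{T1}+\Pi_{T^{*}1}+\Pi_{T_11}+\Pi_{T_1^{*}1},
\]
where each $S^{\vec i,\vec j}_{t_1,t_2}$ is a bi-parameter dyadic shift on $\mathscr{D}^{t_1}\otimes\mathscr{D}^{t_2}$ with normalisation uniform in the complexity $(\vec i,\vec j)$, and the four $\Pi_{\ast}$ are bi-parameter product paraproducts associated with the BMO symbols provided by assumption~II.1. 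The size, H\"older and mixed kernel conditions give the geometric decay in $(\vec i,\vec j)$; the weak boundedness and diagonal BMO conditions in~II.2--II.3 handle the bad-rectangle contributions that arise in the probabilistic argument over the finite family of adjacent dyadic systems.

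With the representation in hand, I would decompose
\[
  [b,T] \;=\; \mathbb{E}_{t_1,t_2}\sum_{\vec i,\vec j} 2^{-\delta(|\vec i|+|\vec j|)}\,[b,S^{\vec i,\vec j}_{t_1,t_2}] \;+\; [b,\Pi_{T1}]+[b,\Pi_{T^{*}1}]+[b,\Pi_{T_11}]+[b,\Pi_{T_1^{*}1}],
\]
and expand each inner commutator into a finite linear combination of bi-parameter shifts and paraproducts in $b$ by the standard ``pull $b$ into the shift'' identity of \cite{HPW}. The crux is to show that, for $b\in\mathrm{bmo}_\nu(\vec X)$, every bi-parameter paraproduct $\Pi_b$ (and the four mixed paraproducts built out of $b$ and the BMO symbols $T1,T^{*}1,T_11,T_1^{*}1$) maps $L^p_{\lambda_1}(\vec X)\to L^p_{\lambda_2}(\vec X)$ with constant proportional to $\|b\|_{\mathrm{bmo}_\nu(\vec X)}$. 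This uses that little bmo controls the fibrewise weighted BMO norm in each variable uniformly, the weighted $H^1$--BMO duality in each factor (Theorem \ref{HardyBMODuality} applied coordinatewise), and the weighted bi-parameter square function estimate from the first step. Combined with a uniform-in-complexity weighted bound for the shifts $S^{\vec i,\vec j}_{t_1,t_2}$, summing the geometric series in $(\vec i,\vec j)$ and averaging over $(t_1,t_2)$ produces the desired estimate.

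The main obstacle will be the bi-parameter dyadic representation itself. In the Euclidean setting one randomises over translation and dilation parameters forming a Lie group; here one must instead randomise over the finite family $\{\mathscr{D}^{t_1}\otimes\mathscr{D}^{t_2}\}$ of adjacent dyadic systems, absorb the unavoidable overlap of such systems on a general $(X,d,\mu)$, and still produce the clean probabilistic cancellation of bad rectangles in the absence of translation invariance. Upgrading the one-parameter weighted Haar-shift bound to a genuinely bi-parameter bound that is uniform in complexity -- presumably by product-setting extrapolation from a bi-parameter $A_2$ estimate -- is a further nontrivial ingredient.
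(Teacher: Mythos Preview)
Your proposal is correct and matches the paper's approach: the paper does not give a detailed proof of this theorem but simply states that it follows ``the same approach as that in \cite{HPW}, and combining all necessary tools as recalled in Section~2 on spaces of homogeneous type (such as the adjacent dyadic system, Haar basis, et~al).'' Your outline is precisely a fleshing-out of that sentence---bi-parameter Haar bases on $\vec X$ built from Theorems~\ref{thm:existence2} and~\ref{thm:convergence}, a Martikainen-type representation of $T$ as an average of shifts plus paraproducts, expansion of $[b,T]$ into paraproduct pieces controlled by $\|b\|_{{\rm bmo}_\nu(\vec X)}$, and summation over the geometric decay in complexity---so there is nothing to compare.
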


We now provide a broader version of the lower bound. Note that in \cite{HPW} the authors only considered the lower bound of commutator with respect to double Riesz transforms, and their proof relies on Fourier transform and hence can not be adapted to spaces of homogeneous type.

We assume that  the bi-parameter Journ\'e operator $T$  satisfies the following ``homogeneous''
condition:

\noindent {\it there exist positive constants $c_0$ and $\overline C$ such that for every $x_1\in X_1$, $x_2\in X_2$ and $r_1,r_2>0$, there exist $y_1\in B_1(x_1, \overline C r_1)\backslash B_1(x_1,r_1)$ and $y_2\in B_2(x_2, \overline C r_2)\backslash B_2(x_2,r_2)$ satisfying
\begin{equation}\label{e-assump cz ker low bdd weak product}
|K(x_1, y_1;x_2,y_2)|\geq \frac1{c_0\mu_1(B_1(x_1,r_1))\mu_2(B_2(x_2,r_2))}.
\end{equation}
}

Then we have the following lower bound.
\begin{thm}\label{thm4}
Let $T$ be a bi-parameter Journ\'e operator on $\vec{X}$ and $T$ satisfies the following ``homogeneous''
condition as above. Let $1 < p < \infty$ and $\lambda_1,\lambda_2\in A_p(\vec{X})$, and define
$\nu:= \lambda_1^{1\over p} \lambda_2^{-{1\over p}}$.
Suppose that $b\in L^1_{loc}(\vec{X})$ and
that $ \| [b,T]:\ L^p_{\lambda_1}(\vec{X})\to L^p_{\lambda_2}(\vec{X}) \|<\infty$.
Then we obtain that
$b\in {\rm bmo}_\nu(\vec{X})$ with
$$ \|b\|_{{\rm bmo}_\nu(\vec{X})}\lesssim\| [b,T]:\ L^p_{\lambda_1}(\vec{X})\to L^p_{\lambda_2}(\vec{X}) \|.$$
\end{thm}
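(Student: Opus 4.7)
The plan is to adapt the one-parameter argument of Theorem \ref{thm2} to the product setting. Given a rectangle $R = B_1 \times B_2$ with $B_i = B_i(x_i^0, r_i)$, the goal is to prove
\begin{equation*}
\Omega(b, R) := \frac{1}{\vec\mu(R)} \int_R |b(\vec x) - b_R|\, d\vec\mu(\vec x) \ls \frac{\nu(R)}{\vec\mu(R)} \cdot \|[b,T]\|_{L^p_{\lambda_1}(\vec X) \to L^p_{\lambda_2}(\vec X)},
\end{equation*}
uniformly in $R$, which is exactly what is needed for $b \in \mathrm{bmo}_\nu(\vec X)$. The argument pairs a test function on a suitably separated "adjacent" rectangle with a product-median sign decomposition of $b$, once the non-degeneracy hypothesis has been strengthened.

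The main obstacle lies in the first step: I need a product analog of Proposition \ref{prop homogeneous}. Concretely, I will show that there are universal constants $3 \le A_1 \le A_2$ such that one can choose points $y_i^0$ with $A_1 r_i \le d_i(x_i^0, y_i^0) \le A_2 r_i$ and corresponding balls $\widetilde B_i := B_i(y_i^0, r_i)$, so that on the whole product $B_1 \times \widetilde B_1 \times B_2 \times \widetilde B_2$ the kernel $K$ does not change sign and satisfies $|K(\vec x, \vec y)| \gs (\mu_1(B_1)\mu_2(B_2))^{-1}$. Starting from the reference point $(x_1^0, y_1^0; x_2^0, y_2^0)$, where \eqref{e-assump cz ker low bdd weak product} already yields the pointwise lower bound, I will move each of the four variables in turn by chaining the mixed size-and-H\"older conditions (3a)--(3d). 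Fixing $A_1$ large enough forces each of these four successive H\"older errors to be much smaller than the main term, so the kernel keeps both its sign and its magnitude. If $K$ is complex-valued I pass to $\mathrm{Re}\, K$ or $\mathrm{Im}\, K$, exactly as in the proof of Proposition \ref{prop homogeneous}. This quantitative chaining across all four variables is where the bi-parameter structure of the Journ\'e operator really enters and is expected to be the hardest part.

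With this step in place, let $\alpha := \alpha_{\widetilde R}(b)$ be a product median for $b$ over $\widetilde R := \widetilde B_1 \times \widetilde B_2$ (Definition \ref{d-median value} applied with respect to $\vec\mu$), and split
\begin{align*}
F_1 &:= \{\vec y \in \widetilde R: b(\vec y) \le \alpha\}, & F_2 &:= \{\vec y \in \widetilde R: b(\vec y) \ge \alpha\}, \\
E_1 &:= \{\vec x \in R: b(\vec x) \ge \alpha\}, & E_2 &:= \{\vec x \in R: b(\vec x) \le \alpha\}.
\end{align*}
Exactly as in Lemma \ref{l-bmo decomp low bdd}, $b(\vec x) - b(\vec y)$ has fixed sign on each $E_i \times F_i$, with $|b(\vec x) - \alpha| \le |b(\vec x) - b(\vec y)|$, and $\vec\mu(F_i) \ge \tfrac12 \vec\mu(\widetilde R)$. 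Testing $[b,T]$ against $f_i := \chi_{F_i}$, the joint sign control on $K(\vec x, \vec y)$ and $b(\vec x) - b(\vec y)$ lets me discard absolute values inside the integral, and Step 1 together with rectangle doubling $\vec\mu(\widetilde R) \sim \vec\mu(R)$ gives
\begin{equation*}
\frac{1}{\vec\mu(R)} \sum_{i=1}^2 \int_R |[b,T] f_i(\vec x)|\, d\vec\mu(\vec x) \gs \frac{1}{\vec\mu(R)} \int_R |b(\vec x) - \alpha|\, d\vec\mu(\vec x) \gs \Omega(b, R),
\end{equation*}
the last step using the elementary fact that $|b_R - \alpha| \le \frac{1}{\vec\mu(R)} \int_R |b - \alpha|\, d\vec\mu$.

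The same sum is controlled from above by H\"older's inequality in $L^p_{\lambda_2}$, the assumed operator norm of $[b,T]$, and $\|f_i\|_{L^p_{\lambda_1}(\vec X)} \le \lambda_1(\widetilde R)^{1/p} \ls \lambda_1(R)^{1/p}$, where the last inequality uses rectangle-doubling of $\lambda_1 \in A_p(\vec X)$. Putting the bounds together,
\begin{equation*}
\Omega(b, R) \ls \frac{\|[b,T]\|_{L^p_{\lambda_1} \to L^p_{\lambda_2}}}{\vec\mu(R)} \cdot \lambda_1(R)^{1/p} \left(\int_R \lambda_2^{-1/(p-1)}\, d\vec\mu\right)^{1/p'}.
\end{equation*}
Invoking the rectangle version of the reverse H\"older inequality \eqref{e-reverse holder} for $A_\infty$ product weights, the same $A_p$ manipulation that concludes the proof of Theorem \ref{thm2} (and of Theorem 1.1 in \cite{LOR2}) converts the right-hand side into $\nu(R)/\vec\mu(R)$ times $\|[b,T]\|$. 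This final step is essentially a line-by-line repetition of the one-parameter calculation with $\mu$ replaced by $\vec\mu$ and $B$ by $R$, completing the proof.
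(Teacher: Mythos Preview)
Your proposal is correct and follows essentially the same route as the paper: upgrade the non-degeneracy \eqref{e-assump cz ker low bdd weak product} to a sign-constant kernel lower bound on a product of separated balls, run the product median decomposition with the sets $E_i, F_i$, and then bound the commutator integrals from above and below exactly as in the one-parameter Theorem \ref{thm2}. The only notable difference is that you spell out the chaining via the mixed H\"older conditions (3a)--(3d) to propagate the kernel lower bound across all four variables, whereas the paper simply asserts this product analog of Proposition \ref{prop homogeneous} without proof; your added detail here is welcome and accurate.
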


To see this, we first point out that the homogeneous condition \eqref{e-assump cz ker low bdd weak product} implies the following condition:
\ there exist positive constants $3\le A_1\le A_2$ such that for any ball $B_i:=B_i(x_0^{(i)}, r_i)\subset X_i$, there exist balls $\widetilde B_i:=B_i(y_0^{(i)}, r_i)$ 
such that {$A_1 r_i\le d_i(x_0^{(i)}, y_0^{(i)})\le A_2 r_i$}. 
Moreover, for all $(x_1,y_1;x_2,y_2)\in ( B_1\times \widetilde{B}_1) \times ( B_2\times \widetilde{B}_2)$, $K(x_1,y_1;x_2,y_2)$ does not change sign and
\begin{equation*} 
|K(x_1,y_1;x_2,y_2)|\gs \frac1{\mu_1(B_1)}\frac1{\mu_2(B_2)}.
\end{equation*}
If $K(x_1,y_1;x_2,y_2):=K_1(x_1,y_1;x_2,y_2)+ i K_2(x_1,y_1;x_2,y_2)$ is complex-valued, where $i^2=-1$,
then at least one of $K_i$ satisfies the assumption above.

We next consider the median on ``rectangles''~$R= B_1\times B_2\subset \vec{X}$.
  By a median value of a real-valued measurable function $f$ over $R$ we mean a possibly non-unique, real number $\alpha_R(f)$ such that
$\vec\mu(\{(x_1,x_2)\in R: f(x_1,x_2)>\alpha_R(f)\})\leq \frac12\mu_1(B_1)\mu_2(B_2)\,\, \mbox{and}\,\,\vec\mu(\{(x_1,x_2)\in R: f(x_1,x_2)<\alpha_R(f)\})\leq \frac12\mu_1(B_1)\mu_2(B_2). $

Now following the idea in Lemma \ref{l-bmo decomp low bdd},
for the given rectangle $R=B_1\times B_2$,   $\widetilde B_1$ and $\widetilde B_2$, set
\begin{align*}
E_1&:=\{(x_1,x_2)\in B_1\times  B_2: b(x_1,x_2)\geq \alpha_{\widetilde B_1\times \widetilde B_2}(b)\}\\
E_2&:=\{(x_1,x_2)\in  B_1\times  B_2: b(x_1,x_2) \leq \alpha_{\widetilde B_1\times \widetilde B_2}(b)\}
\end{align*}
and
\begin{align*}
F_1&:= \{(y_1,y_2)\in \widetilde B_1\times \widetilde B_2: b(y_1,y_2)\leq \alpha_{\widetilde B_1\times \widetilde B_2}(b)\}\\
F_2&:= \{(y_1,y_2)\in \widetilde B_1\times \widetilde B_2: b(y_1,y_2)\geq \alpha_{\widetilde B_1\times \widetilde B_2}(b)\}.
\end{align*}
Then by the definition of $\alpha_R(f)$, we see that $\vec\mu(F_i)\ge\frac{1}{4}\mu_1(\widetilde B_1)\mu_2(\widetilde B_2)$ for $i=1,2$.
Moreover, for $(x_1,x_2)\times(y_1,y_2)\in (E_1\times F_1)\cup (E_2\times F_2)$,
\begin{align*}
|b(x_1,x_2)-b(y_1,y_2)|
&= \lf|b(x_1,x_2)-\alpha_{\widetilde B_1\times \widetilde B_2}(b)+\alpha_{\widetilde B_1\times \widetilde B_2}(b)-b(y_1,y_2)\r|\\
&=
\lf|b(x_1,x_2)-\alpha_{\widetilde B_1\times \widetilde B_2}(b)\r|+\lf|\alpha_{\widetilde B_1\times \widetilde B_2}(b)-b(y_1,y_2)\r|\\
&\geq \lf|b(x_1,x_2)-\alpha_{\widetilde B_1\times \widetilde B_2}(b)\r|.
\end{align*}

\begin{proof}[Proof of Theorem \ref{thm4}]
For given $b\in L^1_{\rm loc}(\vec X)$ and for any rectangle $R=B_1\times B_2$, let
$$\mathcal O(b; R):=\frac1{\vec\mu(R)}\int_R\lf|b(x_1,x_2)-b_R\r|\,d\mu_1(x_1)d\mu_2(x_2).$$
Under the assumptions of Theorem \ref{thm4},
we will show that for any ball $B$,
\begin{align}\label{e-mean osci weigh upp bdd product}
\mathcal O(b; R)\ls \frac{{\nu}(R)}{\vec\mu(R)}.
\end{align}

Without loss of generality, we assume that $K(x_1,y_1;x_2,y_2)$ is real-valued.
Let $R=B_1\times B_2$ be a rectangle. Then we have two rectangles $ B_1\times  B_2$, $\widetilde B_1\times \widetilde B_2$ and sets $E_i, F_i,\,i=1,2,$ as above.

On the one hand,  we have that for $f_i:=\chi_{F_i}$, $i=1,2$,
\begin{align*}
&\frac1{\vec\mu(R)}\sum_{i=1}^2\int_{ B_1\times  B_2}\lf| [b,T]f_i(x_1,x_2)\right|\,d\mu_1(x_1)d\mu_2(x_2)\\
&\ge\frac1{\vec\mu(R)}\sum_{i=1}^2\int_{E_i}\lf| [b,T]f_i(x_1,x_2)\right|\,d\mu_1(x_1)d\mu_2(x_2)\\
&=\frac1{\vec\mu(R)}\sum_{i=1}^2\int_{E_i}\int_{F_i}|b(x_1,x_2)-b(y_1,y_2)||K(x_1,y_1;x_2,y_2)|\,d\mu_1(y_1)d\mu_2(y_2)\, d\mu_1(x_1)d\mu_2(x_2)\\
&\gs\frac1{\vec\mu(R)}\sum_{i=1}^2\int_{E_i}\int_{F_i}\frac{|b(x_1,x_2)-\alpha_{\widetilde B_1\times \widetilde B_2}(b)|}{\mu_1(B_1)\mu_2(B_2)}\,d\mu_1(y_1)d\mu_2(y_2)\, d\mu_1(x_1)d\mu_2(x_2)\\
&\gs\frac1{\vec\mu(R)}\int_{ B_1\times  B_2}\lf|b(x_1,x_2)-\alpha_{\widetilde B_1\times \widetilde B_2}(b)\r|\, d\mu_1(x_1)d\mu_2(x_2)\\
&\gs \mathcal O(b; R).
\end{align*}

On the other hand, from H\"older's inequality and the boundedness of $[b,T]$, we deduce that
\begin{align*}
&\frac1{\vec\mu(R)}\sum_{i=1}^2\int_{ B_1\times  B_2}\lf| [b,T]f_i(x_1,x_2)\right|\,d\mu_1(x_1)d\mu_2(x_2)\\
&\quad\le \frac1{\vec\mu(R)}\sum_{i=1}^2\lf[\int_{ B_1\times  B_2}\lf|[b,T]f_i(x_1,x_2)\right|^p\lambda_2(x_1,x_2)\,d\mu_1(x_1)d\mu_2(x_2)\r]^{1/p}\\
&\quad\quad\qquad\qquad\qquad\times\lf(\int_{B_1\times  B_2}\lambda_2(x_1,x_2)^{-\frac1{p-1}}d\mu_1(x_1)d\mu_2(x_2)\right)^{1/p'}\\
&\quad\ls \frac1{\vec\mu(R)}\sum_{i=1}^2[\lambda_1(F_i)]^{1/p}\lf(\int_{ B_1\times  B_2} \lambda_2(x_1,x_2)^{-\frac1{p-1}}d\mu_1(x_1)d\mu_2(x_2)\right)^{1/p'}\\
&\quad\ls \frac1{\vec\mu(R)}[\lambda_1(\widetilde B_1\times \widetilde B_2)]^{1/p}\lf(\int_{ B_1\times  B_2} \lambda_2(x_1,x_2)^{-\frac1{p-1}}d\mu_1(x_1)d\mu_2(x_2)\right)^{1/p'}\\
&\quad\ls \frac1{\vec\mu(R)}[\lambda_1(R)]^{1/p}\lf(\int_{R}\lambda_2(x_1,x_2)^{-\frac1{p-1}}d\mu_1(x_1)d\mu_2(x_2)\right)^{1/p'},
\end{align*}
where in the last inequality, we use the facts that 
$K_1 r_{B_1}\le d(x_{B_1}, x_{\widetilde B_1})\le K_2 r_{B_1}$ and $\lambda_1(x_1,x_2)\in A_{p}(\vec X)$.

Combining the two inequalities above and invoking $\lambda_i\in A_p(\vec X)$, we conclude that
\begin{align*}
\mathcal O(b; R)\ls\frac1{\vec\mu(R)}[\lambda_1(R)]^{1/p}\lf(\int_{R}\lambda_2(x_1,x_2)^{-\frac1{p-1}}d\mu_1(x_1)d\mu_2(x_2)\right)^{1/p'}\ls\frac{\nu(R)}{\vec\mu(R)}.
\end{align*}
Thus, \eqref{e-mean osci weigh upp bdd product} holds and hence, the proof of Theorem \ref{thm4} is complete.
\end{proof}

\bigskip
\bigskip

{\bf Acknowledgments:}
X. T. Duong and J. Li are supported by ARC DP 160100153 and Macquarie University Research Seeding Grant.
B. D. Wick's research supported in part by National Science Foundation
DMS grant \#1560995 and \# 1800057. R. M. Gong is supported by NNSF of China (Grant No. 11401120) and the Foundation for Distinguished Young Teachers in Higher Education of Guangdong Province (Grant No.  YQ2015126).  D. Yang is supported by the NNSF of China (Grant No. 11571289 and 11871254).

\bigskip

\medskip


\smallskip

Xuan Thinh Duong, Department of Mathematics, Macquarie University, NSW, 2109, Australia.

\smallskip

{\it E-mail}: \texttt{xuan.duong@mq.edu.au}

\vspace{0.3cm}



Ruming Gong, School of Mathematical Sciences, Guangzhou University, China.

\smallskip

{\it E-mail}: \texttt{gongruming@163.com }

\vspace{0.3cm}


Marie-Jose S. Kuffner, Department of Mathematics, Washington University--St. Louis, St. Louis, MO 63130-4899 USA

\smallskip

{\it E-mail}: \texttt{mariejose@wustl.edu}

\vspace{0.3cm}



Ji Li, Department of Mathematics, Macquarie University, NSW, 2109, Australia.

\smallskip

{\it E-mail}: \texttt{ji.li@mq.edu.au}

\vspace{0.3cm}



Brett D. Wick, Department of Mathematics, Washington University--St. Louis, St. Louis, MO 63130-4899 USA

\smallskip

{\it E-mail}: \texttt{wick@math.wustl.edu}

\vspace{0.3cm}



Dongyong Yang,
 School of Mathematical Sciences,
         Xiamen University,
         Xiamen, China

{\it E-mail}: \texttt{dyyang@xmu.edu.cn}

\end{document}